\newtheorem{theorem}{Theorem}[section]
\newtheorem{proposition}[theorem]{Proposition}
\newtheorem{corollary}[theorem]{Corollary}
\newtheorem{lemma}[theorem]{Lemma}
\newtheorem{main lemma}[theorem]{Main Lemma}
\theoremstyle{definition}
\newtheorem{remark}{Remark}
\newcommand{\R}{\mathbb{R}}
\newcommand{\N}{\mathbb{N}}
\newcommand{\mf}[1]{\mathbf{#1}}
\def\pa{\partial}
\def\wc{\rightharpoonup}
\begin{document}

%%%%%%%%%%%%%%%%%%%%%%%%%%%%%%%%%%%%%%%%%%%%%%%%%%%%%%%%%%%%%%%%%%%%%%%%%%%%%%%

%%%%%%%%%%%%%%%%%%%%%%%%%%%%%%%%%%%%%%%%%%%%%%%%%%%%%%%%%%%%%%%%%%%%%%%%%%%%%%%

%\numberwithin{equation}{section}
%%%%%%%%%%%%%%%%%%%%%%%%%%%%%%%%%%%%%%%%%%%%%%%%%%%%%%%%%%%%%%%%%%%%%%%%%%%%%%%
%\maketitle
%%%%%%%%%%%%%%%%%%%%%%%%%%%%%%%%%%%%%%%%%%%%%%%%%%%%%%%%%%%%%%%%%%%%%%%%%%%%%%%

%\email{susanna.terracini@unimib.it}

%\thanks{Work partially supported by the
%italian PRIN2009 grant ``Critical Point Theory and Perturbative
%Methods for Nonlinear Differential Equations"}

\begin{center}
\Large{\sc{On existence and phase separation of solitary waves for nonlinear Schr\"odinger systems modelling simultaneous cooperation and competition}}
\end{center}

%%%%%%%%%%%%%%%%%%%%%%%%%%%%%%%%%%%%%%%%%%%%%%%%%%%%%%%%%%%%%%%%%%%%%%%%%%%%%%%
%\maketitle
%%%%%%%%%%%%%%%%%%%%%%%%%%%%%%%%%%%%%%%%%%%%%%%%%%%%%%%%%%%%%%%%%%%%%%%%%%%%%%%
% Enter the first author's name and address:
%\centerline{\scshape Nicola Soave and Susanna Terracini}
%\medskip
{\centerline{Nicola Soave}
\medskip
\footnotesize
 \centerline{Justus-Liebig-Universit\"at Giessen,}
 \centerline{Mathematisches Institut,}
   \centerline{Arndtstr. 2, 35392 Giessen, Germany}
\centerline{email: nicola.soave@gmail.com; nicola.soave@math.uni-giessen.de}   }

\begin{abstract}
\noindent We study existence and phase separation, and the relation between these two aspects, of positive bound states for the nonlinear elliptic system
\[
\begin{cases}
- \Delta u_i + \lambda_i u_i = \sum_{j=1}^d \beta_{ij} u_j^2 u_i & \text{in $\Omega$} \\
u_1 =\cdots = u_d=0 & \text{on $\pa \Omega$}.
\end{cases}
\]
This system arises when searching for solitary waves for the Gross-Pitaevskii equations.
We focus on the case of \emph{simultaneous cooperation and competition}, that is, we assume that there exist two pairs $(i_1,j_1)$ and $(i_2,j_2)$ such that $i_1 \neq j_1$, $i_2 \neq j_2$, $\beta_{i_1 j_1}>0$ and $\beta_{i_2,j_2}<0$. Our first main results establishes the existence of solutions with at least $m$ positive components for every $m \le d$; any such solution is a minimizer of the energy functional $J$ restricted on a \emph{Nehari-type manifold} $\mathcal{N}$. At a later stage, by means of level estimates on the constrained second differential of $J$ on $\mathcal{N}$, we show that, under some additional assumptions, any minimizer of $J$ on $\mathcal{N}$ has all nontrivial components. In order to prove this second result, we analyse the phase separation phenomena which involve solutions of the system in a \emph{not completely competitive framework}.
% under this assumption, we provide sufficient conditions on the data of the problem in order to find solutions with all positive components. When the competition between some pairs of different components becomes stronger and stronger, that is, \emph{some} of the coupling parameters $\beta_{ij}$ tend to $-\infty$, we study the behaviour of the corresponding solutions, proving in particular the occurrence of phase-separation. We exploit the precise characterization of the phase-separation phenomena to obtain futher existence results.
\end{abstract}

\noindent \textbf{Keywords:} coupled nonlinear elliptic system; phase separation; Nehari manifold; constrained minimization.

\section{Introduction and main results}

Starting from the pioneering paper \cite{LinWeiRN}, the nonlinear system of elliptic equations
\begin{equation}\label{systemdcomp}
\begin{cases}
- \Delta u_i + \lambda_i u_i = \sum_{j=1}^d \beta_{ij} u_j^2 u_i & \text{in $\Omega$} \\
u_1 =\cdots = u_d=0 & \text{on $\pa \Omega$},
\end{cases} \quad i=1,\dots,d,
\end{equation}
defined on a possibly unbounded domain $\Omega \subset \R^N$ with $N =2,3$, has been intensively studied, due to its wide applicability in several physical contexts: it appears, e.g, in the Hartree-Fock theory for Bose-Einstein condensates with multiple states (see \cite{AnEnMa,Gr61,MyBuGh,PaWa,PiSt,RuCaFu}). The so-called Bose-Einstein condensation occurs when, in a dilute gas of bosons, most of the particles occupy the same quantum state, so that they can be described by the same wave-function. This phenomenon, which has been theorized by Einstein starting from some observations by Bose, has been experimentally observed only in recent years. If the gas is a mixture of $d$ components, condensation occurs between particles of the same species, and to describe the evolution of the resulting wave-functions $\psi_1,\dots,\psi_d$ it has been proposed the following system of Schr\"odinger equations:
\[
\begin{cases}
-i \frac{\pa}{\pa t}\psi_j= \Delta \psi_j + \sum_{k = 1}^d \beta_{ij} |\psi_k|^2 \psi_j & \text{in $\R^+ \times \Omega$} \\
\psi_j \in H_0^1(\Omega;\mathbb{C}) \text{ for every $t>0$}
\end{cases} \quad j=1,\dots,d,
\]
known in the literature as the Gross-Pitaevskii equations. As usual in quantum mechanics, $|\psi_j(t,x)|^2$ represents the probability of finding a particle of the $j$-th species in the point $x$ at time $t$; in the model $\beta_{ii}$, $\beta_{ij}=\beta_{ji}$ are the intraspecies and the interspecies scattering length, respectively. The sign of $\beta_{ii}$ describes the interaction between particles of the same condensate: $\beta_{ii}>0$ means attractive (or cooperative) interaction, while $\beta_{ii}<0$ means repulsive (or competitive) interaction. Analogously, $\beta_{ij}$ describes the interaction between particles of two different condensates. We always consider the so-called \emph{self-focusing case}, characterized by $\beta_{ii}>0$ for every $i$. When searching for solutions as \emph{solitary waves}, that is, in the form $\psi_j(t,x)=e^{-i \lambda_j t} u_j(x)$ for some $\lambda_j>0$, one is led to study system \eqref{systemdcomp}. 

Problem \eqref{systemdcomp} arises also in nonlinear optics, in the study of beams in Kerr-like photorefractive media (see \cite{AkAn,BSSSC,Hi}), and as a variational model in population dynamics (this interpretation has been proposed for similar systems in \cite{CoTeVe2002,CoTeVe2003}, see also \cite{CaCo} for an overview of the role of reaction-diffusion equations in this field). 

%\begin{itemize}
%\item in the Hartree-Fock theory for Bose-Einstein condensates with multiple state, and precisely in the research of solitary waves for the Gross-Pitaevskii equations. In this case, $u_i$ represents the amplitude of the $i$-th state in the condensate (see e.g. );
%\item in nonlinear optics, and precisely in the study of beams in Kerr-type photorefractive media, where $u_i$ denotes the density of the $i$-th component in a solitary wave solution (we refer to \cite{});
%\item as a variational model in population dynamics, where $(u_1,\dots,u_d)$ denotes a vector of densities of population belonging to difference species and cooperating/competing for the resources in a certain region. This interpretation has been proposed in \cite{CoTeVe2002,CoTeVe2003} for similar equations.
%\end{itemize}
This paper concerns the existence of solutions of \eqref{systemdcomp} \emph{having all nontrivial components}, from now on simply called \emph{nontrivial} solutions, in cases of simultaneous cooperation and competition; that is, we assume that there exist two pairs $(i_1,j_1)$ and $(i_2,j_2)$ such that $i_1 \neq j_1$, $i_2 \neq j_2$, $\beta_{i_1 j_1}>0$ and $\beta_{i_2,j_2}<0$. Under this assumption, we provide sufficient conditions on the data of the problem in order to find solutions \emph{having all strictly positive components} in $\Omega$; from now on, we refer to such a solution as to a \emph{positive solution}. In order to motivate our study, in what follows we give a brief review of the known existence results regarding system \eqref{systemdcomp}.
%the known results concerning existence of solutions of \eqref{systemdcomp}. 

In light of the symmetry $\beta_{ij}=\beta_{ji}$, problem \eqref{systemdcomp} has variational structure, as its solutions are critical points of the energy functional
\[
J(u_1,\dots, u_d)= \frac12 \int_{\Omega}  \sum_{i=1}^d \left( |\nabla u_i|^2 +\lambda_i u_i^2 \right) - \frac{1}{4} \int_{\Omega} \sum_{i,j=1}^d \beta_{ij} u_i^2 u_j^2.
\]
Note that $J$ is defined and differentiable for $(u_1,\dots,u_d) \in (H^1_0(\Omega))^d$, due to the continuous embedding $H_0^1(\Omega) \hookrightarrow L^4(\Omega)$, which holds for $N \le 4$ and is subcritical for $N \le 3$. As observed in \cite{Si}, the functional $J$ has a certain scalar structure in the product space $(H^1_0(\Omega))^d$, and this allows, under weak assumptions on the coupling parameters, to apply the classical tools of the critical point theory, such as the mountain pass lemma or constrained minimization on the Nehari manifold; this permits to obtain a solution $(u_1,\dots, u_d) \neq (0,\dots,0)$, but does not exclude, a priori, that this solution is \emph{semi-trivial}, that is, some of its components $u_i$ are identically zero. Being interested in solutions having all non-zero components, this is surely one of the main difficulties to face, as one can easily realize by thinking at the known results for the $2$ components system.
%Focusing on the case of the $2$ component system
%\begin{equation}\label{2comp}
%\begin{cases}
%- \Delta u+ \lambda_1 u = \beta_{11} u^3 + \beta_{12} u v^2 & \text{in $\R^N$} \\
%- \Delta v+ \lambda_2 v = \beta_{22} v^3 + \beta_{12} u^2 v & \text{in $\R^N$} \\
%u,v \in H^1(\R^N).
%\end{cases}
%\end{equation}
%which have been widely studied in the literature, 
It has been shown in \cite{AmColor, BaWaWe, Si}, independently and with different methods, that there exist real numbers $0<\Lambda_1 \le \Lambda_2$ (in general the strict inequality holds) depending on $\lambda_1,\lambda_2, \beta_{11}, \beta_{22}>0$, such that if either $\beta_{12} < \Lambda_1$, or $\beta_{12} > \Lambda_2$, then there exists a positive solution $(\bar u, \bar v)$ which is a \emph{bound state}, that is, it has finite energy. On the contrary, if $\Lambda_1 \le \beta_{12} \le \Lambda_2$, then a solution with all nonnegative and nontrivial components does not always exists. This fact follows as a particular case of a more general result (stated in \cite{BaWa,Si}) concerning systems with an arbitrary number of components, and which can be generalized also in a non-autonomous setting, see the forthcoming Proposition \ref{prop: non-ex}. 

It can be useful to comment some of the methods which have been employed to obtain the existence of a positive bound state in the different situations $\beta_{12}< \Lambda_1$, or $\beta_{12}> \Lambda_2$. In \cite{Si}, assuming that $\beta_{12}$ is sufficiently small, a positive solution has been found by minimizing the functional $J$ on the constraint
\[
\mathcal{M}:= \left\{ (u,v) \in \left(H^1_r(\R^N)\right)^2 \left| \begin{array}{l}
u,v \not \equiv 0, \ \pa_u J(u,v)u = 0, \\ \pa_v J(u,v)v=0
\end{array} \right.  \right\}, 
\]
where $\pa_u J$ and $\pa_v J$ denote the partial derivatives of $J$ with respect to $u$ and $v$, respectively, and $H^1_r(\R^N)$ denotes the space of radially symmetric $H^1(\R^N)$ functions; note that any function in $\mathcal{M}$ has both nontrivial components, so that it is sufficient to show that there exists a minimizer of $J$ on $\mathcal{M}$, and that any such minimizer is a solution of the considered problem, in order to have a nontrivial solution; this approach, which was already adopted in similar situations in \cite{CoTeVe2002, CoTeVe2003, LiWe2}, fails when the coupling parameter becomes sufficiently large. In this last case, the existence of a positive bound state has been proved in \cite{AmColor,Si} (and also in \cite{MaMoPe} with a slightly different method) by firstly minimizing the functional $J$ on the classical Nehari manifold
\[
\mathcal{M}^*:= \left\{ (u,v) \in \left(H^1(\R^N)\right)^2: (u,v) \not \equiv (0,0), \ \langle \nabla J(u,v),(u,v)\rangle=0 \right\},
\]
and then by showing that, provided $\beta_{12}$ is sufficiently large, it results
\begin{equation}\label{eq101}
\inf_{(u,v) \in \mathcal{M}^*} J(u,v) < \inf\left\{J(u,v): (u,v) \in \mathcal{M}^* \text{ and either $u \equiv 0$ or $v \equiv 0$}\right\}.
\end{equation}
We point out that the existence of a minimizer on $\mathcal{M}^*$ is not sufficient to find a nontrivial solution, and in all the quoted works the authors overcame the problem with careful level estimates which allow them to prove the \eqref{eq101}.

These approaches have been extended for systems with $d > 2$, in both \emph{purely cooperative cases}, i.e. $\beta_{ij}>0$ for every $i,j$, and \emph{purely competitive cases}, i.e. $\beta_{ij} \le 0$ for every $i\neq j$; in the former case, we refer to Theorem 2 in \cite{LinWeiRNerr} (see also \cite{LinWeiRN}), the results of  Section 6 in \cite{AmColor} (see also \cite{Color}), Theorem 2.1 and Corollary 2.3 in \cite{LiWa10}, and the results of Section 4 in \cite{Si}; in the latter one, we remind the reader to Theorem 3.1 of \cite{LiWa10}.

As already anticipated, our paper concerns the complementary situation in which cooperation and competition coexist. As far as we know, the only two available results in this setting are Theorem 4 in \cite{LinWeiRN}, where a very specific situation is considered, and Theorem 2.1 in \cite{LiWa08}; therein, the authors considered the general $d$ components system \eqref{systemdcomp} in $\R^N$ and obtained existence and multiplicity of nontrivial (neither necessarily minimal, nor positive) solutions assuming that $|\beta_{ij}|$ is sufficiently small for every $i \neq j$.

We can extensively enlarge the set of the coupling parameters for which a positive solution does exist; furthermore, in several situations we obtain \emph{least energy positive solutions}. A least energy positive solution $\mf{u}$ of \eqref{sysx} is a solution with minimal energy among all the positive solutions. To state and discuss our main results, which regard more general non-autonomous problems, we introduce some notation and terminology.

Before, we think that it is convenient to complete the bibliographic introduction mentioning other results concerning both existence, multiplicity, and qualitative properties of bound and ground states \cite{BaDaWa, TeVer, WeWeth2}, and semi-classical states and singularly perturbed problems \cite{LiWe2,LiWe3,MoPeSq,Po}.

\paragraph{Notation.}

We consider non-autonomous systems of type
\begin{equation}\label{sysx}
\begin{cases}
- \Delta u_i + V_i(x) u_i = \sum_{j = 1}^d \beta_{ij}(x) u_j^2 u_i & \text{in $\Omega$} \\
u_1=\dots=u_d=0 &\text{on $\pa \Omega$}.
\end{cases}
\end{equation}
The possibility of considering non-constant potentials and coupling parameters is particularly interesting because, as explained in \cite{Timm}, it is coherent with the physical model of the Bose-Einstein condensation.

In what follows we state our basic assumptions.
%, which will be always implicitly assumed. 
They vary a little according to whether $\Omega$ is bounded or not. If $\Omega$ is a regular bounded domain of $\R^N$ with $N \le 3$, we suppose that 
\begin{equation}\tag{h0}\label{h0}
\begin{split}
&\text{$V_i, \beta_{ij} \in L^\infty(\Omega)$ for every $i$, and for every $(i,j)$, respectively}; \\
&\text{$\beta_{ij}=\beta_{ji}$ a.e. in $\Omega$ for every $i \neq j$;} \\
&\text{$V_i \ge \lambda_i> -\lambda_1(\Omega)$ a.e in $\Omega$, for every $i$;} \\
&\text{there exists $\mu_1,\dots,\mu_d>0$ such that $\beta_{ii} \ge \mu_i>0$, a.e. in $\Omega$, for every $i$; }
\end{split}
\end{equation}
Here $\lambda_1$ is the first eigenvalue of the Laplacian-Dirichlet in $\Omega$. If $\Omega=\R^N$ with $N =2,3$, then we require also that $V_i$  and $\beta_{ij}$ are radially symmetric with respect to the origin, and $V_i \ge \lambda_i>0$ a.e. in $\R^N$. In this latter situation, the boundary condition has to be replaced by $u_i \in H^1(\R^N)$ for every $i$.

From now on we consider $\Omega$ bounded; all the existence results can be trivially extended for problems in $\R^N$ (or in exterior radial domains) working in the set $H^1_r(\R^N)$ of the radially symmetric $H^1(\R^N)$ functions. This is possible in light of the Palais principle of symmetric criticality (see \cite{Pa}), and of the compactness of the embedding $H^1_r(\R^N) \hookrightarrow L^4(\R^N)$, valid for $N =2,3$. On the other hand, while in bounded domains we will often characterize positive solutions as least energy positive solutions, working in $H^1_r(\R^N)$ we can only find \emph{least energy radial positive solutions}, that is, radial positive solutions having minimal energy among all the radial positive solutions (in particular, we cannot exclude that $J(v_1,\dots,v_d) < \inf_{H^1_r(\R^N)} J$ for some non-radial function $(v_1,\dots,v_d)$). Concerning the celebrated system \eqref{systemdcomp}, we remind the interested reader to Theorem \ref{thm: RN} at the end of the introduction for an explicit statement in this setting. 

Let us introduce the main functional spaces and some notation.
\begin{itemize}
\item A bold face letters denotes a vector in some product space (which can be inferred by the context). In particular, $\mathbf{0}=(0,\dots,0)$ and $\mf{1}=(1,\dots,1)$ in some Euclidean space $\R^m$. The symbol $\cdot$ denotes the scalar product in an Euclidean space.
\item We consider the product space $\mathbb H:=(H_0^1(\Omega))^d$, with standard scalar product and norm.
%; it is a Hilbert space with scalar product and norm
%\[
%\langle \mf{u},\mf{v}\rangle_{\mf{V}} = \langle \mf{u},\mf{v}\rangle := \sum_{i=1}^d \langle u_i, v_i \rangle_i
%\quad \text{and} \quad \|\mf{u}\|_{\mf{V}}^2 = \|\mf{u}\|^2:=\langle \mf{u}, \mf{u} \rangle.
%\]
\item The notation $|\cdot|_{p}$ is used for the $L^p(\Omega)$ norm, $1 \le p \le \infty$.
\item $\mf{B}(x):= (\beta_{ij}(x))_{i,j=1,\dots,d}$, and we refer to it as to the \emph{coupling matrix} of system \eqref{sysx}. 
\item We endow the Sobolev space $H_0^1(\Omega)$ with scalar products and norms
%\begin{multline*}
\[
\langle u, v \rangle_{V_i} =\langle u, v \rangle_i := \int_{\Omega} \left( \nabla u \cdot \nabla v + V_i(x) uv\right)  \quad \text{and} \quad \|u\|_{V_i}^2= \|u\|_i^2:=\langle u, u \rangle_i,
\]
%\end{multline*}
for every $i=1,\dots,d$; in light of the Poincar\'e inequality and of the assumptions on $V_i$, these norms are equivalent to the standard one. 
\end{itemize}
\begin{remark}
In the paper we use both the above notations: when the choice of $V_i$ is fixed, we write simply $\langle \cdot, \cdot \rangle_i$ and $\|\cdot\|_i$; on the other hand, we consider also sequences of potentials $V_i^n$, for which the notation $\langle \cdot, \cdot \rangle_{V_i^n}$ and $\|\cdot\|_{V_i^n}$ is preferable. This observation holds also for the following definitions.
\end{remark}
\begin{itemize}
\item We let
\begin{equation}\label{def of S}
S:= \inf_{ u \in H_0^1(\Omega) \setminus \{0\} } \frac{ \int_{\Omega} |\nabla u|^2 }{|u|_4^2}.
\end{equation}
By Sobolev embedding, $S >0$. Moreover, for every $u \in H_0^1(\Omega)$ and for every $i$, we have $S|u|_4^2 \le \int_{\Omega} |\nabla u|^2 \le \|u\|_i^2$.
\item For an arbitrary $m \le d$, we say that a vector $\mf{a}=(a_0,\dots,a_m) \in \N^{m+1}$ is \emph{a $m$-decomposition of $d$} if
\[
0=a_0<a_1<\dots<a_{m-1}<a_m=d;
\]
given a $m$-decomposition $\mf{a}$ of $d$, we set, for $h=1,\dots,m$,
%\begin{equation}\label{def Ih}
\[
\begin{split}
& I_h:= \{i \in  \{1,\dots,d\}:  a_{h-1} < i \le a_h \}, \\
&\mathcal{K}_1:= \left\{(i,j) \in I_h^2 \text{ for some $h=1,\dots,m$, with $i \neq j$}\right\}, \\ &\mathcal{K}_2 := \left\{(i,j) \in I_h \times I_k \text{ with $h \neq k$} \right\}. 
\end{split}
\]
%\end{equation}
\item For a given $m$-decomposition $\mf{a}$ of $d$, we introduce the $m \times m$ matrix
\begin{equation}\label{def matrice M}
\mf{M}(\mf{B},\mf{u}):= \left( \sum_{(i,j) \in I_h \times I_k} \int_{\Omega} \beta_{ij}(x) u_i^2 u_j^2\right)_{h,k=1,\dots,m},
\end{equation} 
where $\mf{u} \in \mathbb{H}$. Since $\beta_{ij}=\beta_{ji}$ for every $i \neq j$, this is a real symmetric matrix.
\item Let $\mf{a}$ be a $m$-decomposition of $d$, and let $\mf{u} \in \mathbb{H}$. We
set, for $h=1,\dots,m$,
\begin{equation}\label{def uh}
\mf{u}_h:= \left(u_{a_{h-1}+1},\dots,u_{a_h}\right) \in (H_0^1(\Omega))^{a_h-a_{h-1}}.
\end{equation}
The space $(H_0^1(\Omega))^{a_h-a_{h-1}}$ is naturally endowed with scalar product and norm
\[
\langle \mf{v}^1, \mf{v}^2 \rangle_{\mf{V}_h} =  \langle \mf{v}^1, \mf{v}^2 \rangle_h := \sum_{i \in I_h} \langle v_{i}^1, v_i^2 \rangle_i \quad \text{and} \quad \|\mf{v}\|_{\mf{V}_h}^2 = \|\mf{v}\|_h^2:=\langle \mf{v}, \mf{v} \rangle_h.
\]
\end{itemize}

\subsection{Main results}

First of all, the non-existence result for nonnegative solutions proved in \cite{BaWa,Si} can be trivially generalized in our setting, observing that any weak solution of \eqref{sysx} belongs to $(\mathcal{C}^{1,\alpha}(\Omega))^d$ for every $\alpha \in (0,1)$, and consequently the strong maximum principle applies.

\begin{proposition}\label{prop: non-ex}
Let $\mf{V},\mf{B}$ be as in \eqref{h0}. Assume that there exist $i, j \in \{1,\dots,d\}$ such that $V_i \ge V_j$, $\beta_{ik} \le \beta_{jk}$ a.e. in $\Omega$, for every $k=1,\dots,d$, and at least one of these inequalities is strict in a set of positive measure. Then a weak solution of \eqref{sysx} with $u_i, u_j \not \equiv 0$ and $u_i,u_j \ge 0$ does not exist.  
\end{proposition}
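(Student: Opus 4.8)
The plan is to compare the $i$-th and the $j$-th equations of the system by a Picone-type device, exactly as in the autonomous case, the only new ingredient being that the coefficients are now $x$-dependent. Suppose, for contradiction, that $\mf{u}$ is a weak solution of \eqref{sysx} with $u_i,u_j\ge 0$ and $u_i,u_j\not\equiv 0$. Since $u_j\in H^1_0(\Omega)$, I test the weak formulation of the $i$-th equation with $u_j$, and symmetrically the $j$-th equation with $u_i$; every integral is finite because $V_i,\beta_{ij}\in L^\infty(\Omega)$ and $H^1_0(\Omega)\hookrightarrow L^4(\Omega)$. Subtracting the two identities, the term $\int_\Omega\nabla u_i\cdot\nabla u_j$ cancels by symmetry and I am left with
\[
\int_\Omega (V_i-V_j)\,u_i u_j \;=\; \sum_{k=1}^d \int_\Omega (\beta_{ik}-\beta_{jk})\,u_k^2\, u_i u_j .
\]

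Now the sign hypotheses take over: since $V_i-V_j\ge0$ and $u_iu_j\ge0$ a.e., the left-hand side is nonnegative, while since $\beta_{ik}-\beta_{jk}\le0$ and $u_k^2u_iu_j\ge0$ a.e., the right-hand side is nonpositive. Hence both sides vanish, and, each summand on the right being $\le 0$, in fact $\int_\Omega(V_i-V_j)u_iu_j=0$ and $\int_\Omega(\beta_{ik}-\beta_{jk})u_k^2u_iu_j=0$ for every $k=1,\dots,d$.

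It remains to upgrade these vanishing integrals to a contradiction with the strictness assumption, and this is where positivity enters. By the regularity and maximum-principle remarks recalled immediately before the statement, $u_i,u_j\in\mathcal{C}^{1,\alpha}(\Omega)$ solve linear equations $-\Delta u_\ell+c_\ell(x)u_\ell=0$ with $c_\ell:=V_\ell-\sum_k\beta_{\ell k}u_k^2$ locally bounded, so, being nonnegative and not identically zero, they are strictly positive throughout $\Omega$. Therefore $u_iu_j>0$ a.e., whence $\int_\Omega(V_i-V_j)u_iu_j=0$ forces $V_i=V_j$ a.e.; likewise $(\beta_{ik}-\beta_{jk})u_k^2=0$ a.e. for each $k$, so $\beta_{ik}=\beta_{jk}$ a.e. on $\{u_k>0\}$ — in particular a.e. on $\Omega$ when $k\in\{i,j\}$, and more generally whenever $u_k\not\equiv0$, again by the strong maximum principle. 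This contradicts the hypothesis that one of the inequalities $V_i\ge V_j$, $\beta_{ik}\le\beta_{jk}$ is strict on a set of positive measure, and the proof is complete. I do not expect any genuine obstacle here: the only points that call for a word of justification are the admissibility of $u_i,u_j$ as test functions and the convergence of the quartic integrals (both immediate from the $L^\infty$ bounds on the coefficients and the embedding $H^1_0(\Omega)\hookrightarrow L^4(\Omega)$), together with the routine bookkeeping of which component carries the strict inequality.
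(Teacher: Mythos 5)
Your argument is the standard one and is exactly what the paper has in mind: the paper does not write the proof out but refers to the two--component computation of Bartsch--Wang and Sirakov, upgraded by the regularity and strong maximum principle remarks preceding the statement, and your cross-testing identity
\[
\int_\Omega (V_i-V_j)\,u_iu_j \;=\; \sum_{k=1}^d\int_\Omega(\beta_{ik}-\beta_{jk})\,u_k^2\,u_iu_j ,
\]
together with the sign discussion and the strict positivity of $u_i,u_j$ in $\Omega$, is precisely that generalization. The admissibility of the test functions and the convergence of the quartic integrals are indeed immediate from \eqref{h0} and the embedding $H^1_0(\Omega)\hookrightarrow L^4(\Omega)$.

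The one step that deserves more care is the last one, where you pass from the vanishing of each summand to a contradiction. From $\int_\Omega(\beta_{ik}-\beta_{jk})u_k^2u_iu_j=0$ and $u_iu_j>0$ in $\Omega$ you only obtain $\beta_{ik}=\beta_{jk}$ a.e.\ on $\{u_k\neq 0\}$, and for $k\notin\{i,j\}$ you have no control on this set: the hypotheses give no sign information on $u_k$, so the strong maximum principle is not available there (your clause ``more generally whenever $u_k\not\equiv0$'' is not justified as written), and, worse, nothing prevents $u_k\equiv 0$. If the only strict inequality in the hypotheses is $\beta_{ik}<\beta_{jk}$ for such a $k$ and the solution has $u_k\equiv 0$, the argument produces no contradiction --- and none can be produced, since one can then exhibit solutions of the reduced system with $u_k\equiv0$ and $u_i=u_j>0$ (take $d=3$, $V_1=V_2=V_3$, $\beta_{11}=\beta_{22}=\beta_{12}=1$, $\beta_{13}<0=\beta_{23}$, $u_3\equiv0$, $u_1=u_2$ a positive solution of the symmetric two--component system). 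So the statement must be read with the strictness required among the inequalities that involve only the indices $i$ and $j$ (namely $V_i\ge V_j$, $\beta_{ii}\le\beta_{ji}$, $\beta_{ij}\le\beta_{jj}$), or more generally among indices $k$ for which $u_k$ is known not to vanish on the strictness set; under that reading your proof is complete, but you should say this explicitly instead of asserting the contradiction for an arbitrary $k$.
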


Let $\mf{a}$ be a $m$-decomposition of $d$, for some $m \le d$. Solutions of \eqref{sysx} are critical points of the energy functional
%\begin{equation}\label{functional}
\[
\begin{split}
J_{\mf{V},\mf{B}}(\mf{u})  = J_{\mf{B}}(\mf{u}) &=\frac12 \int_{\Omega} \sum_{i=1}^d \left(|\nabla u_i|^2 + V_i(x) u_i^2\right) - \frac{1}{4} \int_{\Omega} \sum_{i,j=1}^d \beta_{ij}(x) u_i^2 u_j^2 \\
& = \frac{1}{2}\sum_{h=1}^m \|\mf{u}_h\|_h^2 - \frac{1}{4}  \mf{M}(\mf{B},\mf{u}) \mf{1} \cdot \mf{1},
\end{split}
\]
%\end{equation}
which is well defined in $\mathbb{H}$. We search for a solution of \eqref{sysx} as a constrained minimizer of $J_{\mf{B}}$ in a suitable subset of $\mathbb{H}$. We introduce the \emph{Nehari-type} set
%\begin{equation}\label{Nehari teams}
\[
\mathcal{N}_{\mf{V},\mf{B}}=  \mathcal{N}_{\mf{B}}:= \left\{ \mf{u} \in \mathbb{H} \left| \begin{array}{l}
\|\mf{u}_h\|_h^2 \neq 0 \text{ and } \sum_{i \in I_h} \pa_i J_{\mf{B}}(\mf{u})u_i = 0 \\
\text{for every $h=1,\dots,m$}
\end{array}\right.\right\},
\]
%\end{equation}
where $\pa_i J_{\mf{B}}$ denotes the partial derivative of $J_{\mf{B}}$ with respect to $u_i$; moreover, we define
%\begin{equation}\label{open cone}
\[
\mathcal{E}_{\mf{B}} := \left\{ \mf{u} \in \mathbb{H}: \text{the matrix $\mf{M}(\mf{B},\mf{u})$ is strictly diagonally dominant} \right\},
\]
%\end{equation}
where we recall that a $m \times m$ matrix $A=(a_{ij})_{i,j}$ is strictly diagonally dominant if $|a_{ii}| > \sum_{j \neq i} |a_{ij}|$ for every $i=1,\dots,m$.

\begin{remark}\label{rem: similarity}
1) Note the similarity between the definition of $\mathcal{N}_{\mf{B}}$, and that of
\[
\mathcal{M}^*_{\mf{B}}= \left\{\mf{u} \neq \mf{0} \text{ and } \langle \nabla J_{\mf{B}}(\mf{u}),\mf{u} \rangle = 0\right\} \quad \text{or} \quad
\mathcal{M}_{\mf{B}}= \left\{ \begin{array}{l}
\|u_i\|_i^2 \neq 0 \text{ and } \pa_i J_{\mf{B}}(\mf{u})u_i = 0 \\
\text{for every $i=1,\dots,d$}
\end{array}\right\}.
\]
Actually, $\mathcal{N}_{\mf{B}}=\mathcal{M}^*_{\mf{B}}$, which is the Nehari manifold, in the degenerate situation of a $m$-decomposition of $d$ with $m=0$. Analogously, $\mathcal{N}_{\mf{B}}=\mathcal{M}_{\mf{B}}$ if $\mf{a}$ is the unique $d$-decomposition of $d$, that is, $\mf{a} = (0,1,\dots,d-1,d)$. \\
2) Regarding $\mathcal{E}_{\mf{B}}$, we recall that, as a consequence of the Gershgorin circle theorem, a strictly diagonally dominant, symmetric, real matrix with positive diagonal entries is positive definite. 
\end{remark}

The role of $\mathcal{N}_{\mf{B}}$ and $\mathcal{E}_{\mf{B}}$ is clarified by the following proposition.

\begin{proposition}\label{prop: natural constraint}
Let $\mf{u}^{\mf{B}} \in \mathcal{E}_{\mf{B}} \cap \mathcal{N}_{\mf{B}}$ be a constrained critical point of $J_{\mf{B}}$ restricted on $\mathcal{E}_{\mf{B}} \cap \mathcal{N}_{\mf{B}}$. Then $\mf{u}^{\mf{B}}$ is a free critical point of $J_{\mf{B}}$ in $\mathbb{H}$ with at least $m$ nontrivial components. In other worlds, $\mathcal{E}_{\mf{B}} \cap \mathcal{N}_{\mf{B}}$ is a natural constraint.
\end{proposition}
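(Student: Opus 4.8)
The plan is to carry out the classical ``natural constraint'' argument via Lagrange multipliers, the one genuinely new point being to pin down the role of $\mathcal{E}_{\mf{B}}$. First I would set up notation: writing $G_h(\mf{u}):=\sum_{i\in I_h}\pa_i J_{\mf{B}}(\mf{u})u_i$ and using the identity $\pa_i J_{\mf{B}}(\mf{u})u_i=\|u_i\|_i^2-\sum_{j=1}^d\int_\Omega\beta_{ij}u_i^2u_j^2$ together with $\{1,\dots,d\}=\bigsqcup_{k=1}^m I_k$, one gets $G_h(\mf{u})=\|\mf{u}_h\|_h^2-\sum_{k=1}^m\big(\mf{M}(\mf{B},\mf{u})\big)_{hk}$. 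The functional $J_{\mf{B}}$ and each $G_h$ are of class $C^1$ on $\mathbb{H}$ by the embedding $H^1_0(\Omega)\hookrightarrow L^4(\Omega)$; since $\mf{u}\mapsto\big(\mf{M}(\mf{B},\mf{u})\big)_{hk}$ is continuous on $\mathbb{H}$ and strict diagonal dominance is an open condition on the entries, $\mathcal{E}_{\mf{B}}$ is open, as is $\{\mf{u}:\|\mf{u}_h\|_h^2\neq0\ \text{for every }h\}$. Hence on these open sets $\mathcal{N}_{\mf{B}}$ coincides with the common zero set of $G_1,\dots,G_m$.

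The core of the proof is a scaling computation. For each $k=1,\dots,m$ I would take $\mf{v}^k\in\mathbb{H}$ to be the vector whose $k$-th block equals $\mf{u}_k$ and whose remaining blocks vanish, so that $\mf{u}+t\mf{v}^k$ is obtained from $\mf{u}$ by replacing its $k$-th block with $(1+t)\mf{u}_k$. Homogeneity of the quadratic and quartic terms yields $\|(\mf{u}+t\mf{v}^k)_h\|_h^2=(1+t)^{2\delta_{hk}}\|\mf{u}_h\|_h^2$ and $\big(\mf{M}(\mf{B},\mf{u}+t\mf{v}^k)\big)_{hl}=(1+t)^{2(\delta_{hk}+\delta_{lk})}\big(\mf{M}(\mf{B},\mf{u})\big)_{hl}$; differentiating $G_h(\mf{u}+t\mf{v}^k)$ at $t=0$, and in the case $h=k$ using the relation $\|\mf{u}_k\|_k^2=\sum_{l=1}^m\big(\mf{M}(\mf{B},\mf{u})\big)_{kl}$ that holds whenever $G_k(\mf{u})=0$, one arrives at
\[
\langle\nabla G_h(\mf{u}),\mf{v}^k\rangle=-2\big(\mf{M}(\mf{B},\mf{u})\big)_{hk},\qquad h,k=1,\dots,m,\quad\mf{u}\in\mathcal{N}_{\mf{B}}.
\]
Now if $\mf{u}\in\mathcal{E}_{\mf{B}}\cap\mathcal{N}_{\mf{B}}$ then $\mf{M}(\mf{B},\mf{u})$ is strictly diagonally dominant, hence nonsingular (Levy--Desplanques), so the $m\times m$ matrix $\big(\langle\nabla G_h(\mf{u}),\mf{v}^k\rangle\big)_{h,k}$ is invertible; in particular $\nabla G_1(\mf{u}),\dots,\nabla G_m(\mf{u})$ are linearly independent at every point of $\mathcal{E}_{\mf{B}}\cap\mathcal{N}_{\mf{B}}$. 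Thus $\mathcal{E}_{\mf{B}}\cap\mathcal{N}_{\mf{B}}$ is a $C^1$ submanifold of $\mathbb{H}$ of codimension $m$, which in particular makes sense of the notion of constrained critical point, and the Lagrange multiplier rule applies: there are $\mu_1,\dots,\mu_m\in\R$ with $\nabla J_{\mf{B}}(\mf{u}^{\mf{B}})=\sum_{h=1}^m\mu_h\nabla G_h(\mf{u}^{\mf{B}})$.

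To conclude I would test this identity against each $\mf{v}^k$. On the left, $\langle\nabla J_{\mf{B}}(\mf{u}^{\mf{B}}),\mf{v}^k\rangle=\sum_{i\in I_k}\pa_i J_{\mf{B}}(\mf{u}^{\mf{B}})u_i=G_k(\mf{u}^{\mf{B}})=0$ since $\mf{u}^{\mf{B}}\in\mathcal{N}_{\mf{B}}$; on the right, by the scaling identity and the symmetry of $\mf{M}(\mf{B},\mf{u}^{\mf{B}})$ one gets $-2\sum_{h=1}^m\mu_h\big(\mf{M}(\mf{B},\mf{u}^{\mf{B}})\big)_{hk}=-2\big(\mf{M}(\mf{B},\mf{u}^{\mf{B}})\,(\mu_1,\dots,\mu_m)^\top\big)_k$. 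Hence $\mf{M}(\mf{B},\mf{u}^{\mf{B}})\,(\mu_1,\dots,\mu_m)^\top=\mf{0}$, and nonsingularity forces $\mu_1=\dots=\mu_m=0$, i.e.\ $\nabla J_{\mf{B}}(\mf{u}^{\mf{B}})=\mf{0}$, so $\mf{u}^{\mf{B}}$ is a free critical point of $J_{\mf{B}}$. Finally, $\|\mf{u}^{\mf{B}}_h\|_h^2\neq0$ for each $h=1,\dots,m$ by the definition of $\mathcal{N}_{\mf{B}}$, so each of the $m$ pairwise disjoint blocks $I_h$ carries at least one nontrivial component and therefore $\mf{u}^{\mf{B}}$ has at least $m$ nontrivial components. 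The main obstacle is precisely the scaling identity $\langle\nabla G_h(\mf{u}),\mf{v}^k\rangle=-2\big(\mf{M}(\mf{B},\mf{u})\big)_{hk}$ and the observation that belonging to $\mathcal{E}_{\mf{B}}$ is exactly the condition that turns this ``Jacobian'' into an invertible matrix; once these are in hand, the rest is the routine constrained-critical-point machinery.
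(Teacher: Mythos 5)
Your proof is correct and follows essentially the same route as the paper: the same constraint functions $G_h$, the same key identity $\langle\nabla G_h(\mf{u}),\mf{v}^k\rangle=-2\mf{M}(\mf{B},\mf{u})_{hk}$ on $\mathcal{N}_{\mf{B}}$ (the paper derives it by differentiating $G_h$ directly rather than by your homogeneity argument, but the computation is the same), and the same Lagrange-multiplier conclusion via nonsingularity of $\mf{M}(\mf{B},\mf{u}^{\mf{B}})$ on $\mathcal{E}_{\mf{B}}$.
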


\begin{remark}
In light of the similarity between $\mathcal{N}_{\mf{B}}$ and the Nehari manifold $\mathcal{M}_{\mf{B}}$ and $\mathcal{M}^*_{\mf{B}}$ (see Remark \ref{rem: similarity}), it is not surprising that the minimization on $\mathcal{N}_{\mf{B}}$ gives a free critical point of $J_\mf{B}$ in $\mathbb{H}$; we refer to the recent contribution \cite{NoVe} for a general discussion concerning this type of constraints. On the other hand, the role of $\mathcal{E}_{\mf{B}}$ could appear quite mysterious at this stage. Such a role will be described in the forthcoming Remarks \ref{rem: geom constraint} (points 3) and 4)) and \ref{role of EB}.
\end{remark}

If we are interested in existence of semi-trivial solutions of \eqref{sysx} with at least $m$ nontrivial components, by Proposition \ref{prop: natural constraint} it is sufficient to find conditions on $\mf{V}$ and $\mf{B}$ which allow to prove that a minimizer for $J_{\mf{B}}$ restricted on $\mathcal{E}_{\mf{B}} \cap \mathcal{N}_{\mf{B}}$ does exist.

\begin{theorem}\label{thm:main1}
Let $d \ge 2$, let $\mf{a}$ be a $m$-decomposition of $d$ for some $m \le d$. Assume that \eqref{h0} holds, and let 
\begin{equation}\tag{h1}\label{h1}
\text{$\beta_{ij} \ge 0$ a.e. in $\Omega$ for every $(i,j) \in \mathcal{K}_1$}.
\end{equation}
There exists $\bar K>0$, depending on $\beta_{ii}, V_i$ and $\max_{(i,j) \in \mathcal{K}_1} |\beta_{ij}|_\infty$, such that if 
\begin{equation}\tag{h2}\label{small_coop_ass}
\text{$|\beta_{ij}^+|_\infty \le \bar K$ for every $(i,j) \in \mathcal{K}_2$}, 
\end{equation}
then there exists a nonnegative minimizer $\mf{u}^{\mf{B}}$ of $J_{\mf{B}}$ constrained on $\mathcal{N}_{\mf{B}} \cap \mathcal{E}_{\mf{B}}$. Consequently, $\mf{u}^{\mf{B}}$ is a solution of \eqref{sysx} with at least $m$ non-zero components. Furthermore, there exist $\overline{\gamma}$, $\underline{\gamma},\bar M>0$, depending on $\beta_{ii}, V_i$ and $\max_{(i,j) \in \mathcal{K}_1} |\beta_{ij}|_\infty$, such that 
\begin{equation}\label{estimates thm 1}
\sum_{i=1}^d \|u_i^{\mf{B}}\|_i^2 \le \overline{\gamma}, \quad \sum_{i \in I_h} |u_i^{\mf{B}}|_4^2 \ge \underline{\gamma}, \quad \sum_{i,j=1}^d \int_{\Omega}   \beta_{ij}^-(x) \left(u_i^{\mf{B}} u_j^{\mf{B}}\right)^2 \le \bar M.
\end{equation}
\end{theorem}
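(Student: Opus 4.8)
The plan is to carry out a direct minimization of $J_{\mf{B}}$ on the Nehari-type set $\mathcal{N}_{\mf{B}} \cap \mathcal{E}_{\mf{B}}$ via the standard three-step scheme: (i) show the set is nonempty, (ii) show the infimum $c := \inf_{\mathcal{N}_{\mf{B}} \cap \mathcal{E}_{\mf{B}}} J_{\mf{B}}$ is positive and that minimizing sequences are bounded and stay uniformly inside $\mathcal{E}_{\mf{B}}$, (iii) pass to the weak limit and show it is attained. First I would reduce to nonnegative competitors by replacing each $u_i$ by $|u_i|$, which does not increase $J_{\mf{B}}$ (since $\beta_{ij}u_i^2u_j^2$ is unchanged) and preserves the constraints, because $\pa_i J_{\mf{B}}(\mf{u})u_i$ only involves $u_i^2$, $|\nabla u_i|^2$ and $V_i u_i^2$. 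The key structural observation, exactly as in Remark \ref{rem: similarity}, is that on $\mathcal{N}_{\mf{B}}$ one has $\|\mf{u}_h\|_h^2 = \sum_{(i,j)\in I_h\times I_k,\, k}\int \beta_{ij}u_i^2u_j^2$ summed appropriately; writing $t_h^2 = \|\mf{u}_h\|_h^2$ and using the matrix $\mf{M}(\mf{B},\mf{u})$, the Nehari conditions say $t_h^2 = (\mf{M}(\mf{B},\mf{u})\mf{1})_h$ for each $h$, whence $J_{\mf{B}}(\mf{u}) = \tfrac14\sum_h\|\mf{u}_h\|_h^2 = \tfrac14\sum_h t_h^2$ on the constraint. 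So minimizing $J_{\mf{B}}$ is the same as minimizing $\sum_h\|\mf{u}_h\|_h^2$ over the constrained set, which is the right quantity to estimate.

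For nonemptiness and for the lower bound, I would fix any $\mf{u}$ with $\|\mf{u}_h\|_h^2 \ne 0$ for all $h$ and look for scalings $\mf{u}_h \mapsto s_h \mf{u}_h$, $s_h>0$, landing on $\mathcal{N}_{\mf{B}}$: the Nehari system becomes $s_h^2\|\mf{u}_h\|_h^2 = \sum_k s_h^2 s_k^2 m_{hk}(\mf{u})$, i.e. $\|\mf{u}_h\|_h^2 = \sum_k s_k^2 m_{hk}(\mf{u})$ with $m_{hk} = (\mf{M}(\mf{B},\mf{u}))_{hk}$. This is a linear system in the variables $\sigma_k := s_k^2$. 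Here is where hypothesis \eqref{h1} (so the $\mathcal{K}_1$-entries entering the diagonal $m_{hh}$ are nonnegative, and $m_{hh} \ge \sum_{i\in I_h}\mu_i|u_i|_4^4 >0$ using $\beta_{ii}\ge\mu_i$) and hypothesis \eqref{small_coop_ass} (so the off-diagonal positive parts $|\beta_{ij}^+|_\infty$ are $\le \bar K$, forcing $|m_{hk}| $ small relative to $m_{hh}$ by the $L^4$-norm control $|m_{hk}| \le \bar K \sum_{i\in I_h,j\in I_k}|u_i|_4^2|u_j|_4^2$ against $m_{hh}\gtrsim \sum_{i\in I_h}|u_i|_4^4$) combine to make $\mf{M}(\mf{B},\mf{u})$ \emph{strictly diagonally dominant}, hence invertible with positive solution $\sigma_k>0$ by an M-matrix/Gershgorin argument; this simultaneously certifies $\mathcal{N}_{\mf{B}}\cap\mathcal{E}_{\mf{B}}\ne\emptyset$ and, by a quantitative version of the same estimate, gives the uniform lower bound $\sum_{i\in I_h}|u_i^{\mf{B}}|_4^2 \ge \underline\gamma$ (from $\|\mf{u}_h\|_h^2 \le (m_{hh} + \text{small})\,\sigma_{\max} \lesssim |\mf{u}_h|_4^4 + \dots$ combined with $S|\mf{u}_h|_4^2 \le \|\mf{u}_h\|_h^2$) and thus $c>0$.

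For coercivity and compactness: on $\mathcal{N}_{\mf{B}}$, $J_{\mf{B}}(\mf{u}) = \tfrac14\sum_h\|\mf{u}_h\|_h^2$, so a minimizing sequence $(\mf{u}^n)$ is bounded in $\mathbb{H}$, giving $\overline\gamma$; extract a weakly convergent subsequence $\mf{u}^n \wc \mf{u}$ in $\mathbb{H}$, with $\mf{u}^n_i \to u_i$ strongly in $L^4(\Omega)$ by compact embedding (on a bounded domain, or in $H^1_r(\R^N)$). Strong $L^4$ convergence upgrades $\mf{M}(\mf{B},\mf{u}^n) \to \mf{M}(\mf{B},\mf{u})$; the lower bound $\sum_{i\in I_h}|u^n_i|_4^2\ge\underline\gamma$ passes to the limit so $\mf{u}_h\ne\mf{0}$ and, by continuity of strict diagonal dominance under the limit (using the uniform estimates), $\mf{u}\in\mathcal{E}_{\mf{B}}$. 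The one subtlety is that $\mf{u}$ may fail the Nehari identities because of weak-limit loss in $\|\mf{u}_h\|_h^2 \le \liminf\|\mf{u}^n_h\|_h^2$; I would fix this by rescaling $\mf{u}_h\mapsto s_h\mf{u}_h$ onto $\mathcal{N}_{\mf{B}}$ (possible by the diagonal-dominance argument above) and checking $s_h \le 1$, so that $J_{\mf{B}}$ at the rescaled point is $\le \tfrac14\sum_h s_h^2\liminf\|\mf{u}^n_h\|_h^2 \le \liminf J_{\mf{B}}(\mf{u}^n) = c$; hence equality, $s_h=1$, $\mf{u}\in\mathcal{N}_{\mf{B}}$, and $\mf{u}$ is the desired minimizer. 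The last estimate in \eqref{estimates thm 1}, $\sum_{i,j}\int\beta_{ij}^-(u^{\mf{B}}_iu^{\mf{B}}_j)^2 \le \bar M$, follows because the full sum $\mf{M}(\mf{B},\mf{u})\mf{1}\cdot\mf{1} = \sum_h\|\mf{u}_h\|_h^2 \le \overline\gamma$, while the positive-coupling part (all $\mathcal{K}_1$ terms plus the $\bar K$-small $\mathcal{K}_2^+$ terms) is bounded above using $\overline\gamma$, so its negative counterpart is bounded too; then $\mf{u}^{\mf{B}}$ is a genuine solution with at least $m$ nontrivial components by Proposition \ref{prop: natural constraint}.

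The main obstacle is the interplay in step (iii): ensuring the weak limit actually lands on $\mathcal{N}_{\mf{B}}\cap\mathcal{E}_{\mf{B}}$ rather than merely in its "closure," which requires both the uniform lower $L^4$-bounds to survive the limit (so no component block collapses) and the rescaling factors to be controlled ($s_h\le 1$); both hinge on the quantitative diagonal-dominance estimate produced from \eqref{h1}–\eqref{small_coop_ass}, so getting a clean, constant-explicit form of that estimate — with $\bar K$ depending only on $\beta_{ii}, V_i$ and $\max_{\mathcal{K}_1}|\beta_{ij}|_\infty$ — is the technical heart of the proof.
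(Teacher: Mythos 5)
Your overall scheme (direct minimization, a priori bounds, weak limit, rescaling back onto the constraint) is the same as the paper's, and your treatment of $\overline{\gamma}$, of the third estimate in \eqref{estimates thm 1}, and of the reduction to nonnegative components is fine. But there is a genuine gap at what you yourself call the technical heart: the claim that \eqref{h1} and \eqref{small_coop_ass} force $\mf{M}(\mf{B},\mf{u})$ to be strictly diagonally dominant, via the estimate $|m_{hk}| \le \bar K \sum_{i\in I_h, j \in I_k} |u_i|_4^2 |u_j|_4^2$ for $h \neq k$. That estimate is false: for $(i,j)\in\mathcal{K}_2$ only $|\beta_{ij}^+|_\infty$ is assumed small, while $\beta_{ij}^-$ is completely unconstrained, so $|m_{hk}| = |\sum \int_\Omega \beta_{ij} u_i^2 u_j^2|$ can be as large as $\max |\beta_{ij}^-|_\infty \int u_i^2 u_j^2$ whenever the supports overlap, and no smallness of $\bar K$ controls it. (Even discarding the negative parts, diagonal dominance fails for arbitrary $\mf{u}$ when $|u_j|_4$, $j\in I_k$, is huge relative to $|u_i|_4$, $i \in I_h$; the bound only has a chance after the a priori estimate $\sum_i\|u_i\|_i^2\le\overline{\gamma}$ is in force.) This is precisely why the paper does \emph{not} derive diagonal dominance from the hypotheses but instead imposes it as the extra constraint $\mathcal{E}_{\mf{B}}$, and then proves the weak limit stays in $\mathcal{E}_{\mf{B}}$ by a trick you are missing: at a point where dominance degenerates to equality, $\mf{M}_{\bar h\bar h} = \sum_{k\neq\bar h}|\mf{M}_{\bar h k}| \le \sum\int(\beta_{ij}^+ + \beta_{ij}^-)(u_iu_j)^2$, and adding the Nehari inequality $\|\mf{u}_{\bar h}\|_{\bar h}^2 \le \mf{M}_{\bar h\bar h} + \sum_{k\neq \bar h}\int(\beta_{ij}^+-\beta_{ij}^-)(u_iu_j)^2$ makes the negative parts cancel, leaving only $\beta_{ij}^+$ terms which \emph{are} controlled by $\bar K$ and $\overline{\gamma}$.

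Two further steps of your plan inherit this problem. First, positivity of the rescaling factors: strict diagonal dominance plus Gershgorin gives invertibility and positive definiteness, but since the off-diagonal entries of $\mf{M}$ have mixed signs it is \emph{not} an M-matrix, so the solution $\boldsymbol\sigma$ of $\mf{M}\boldsymbol\sigma = (\|\mf{u}_h\|_h^2)_h$ need not be componentwise positive; the paper has to work instead with maximum points of the fiber map $\Psi_{\mf{B},\mf{u}}$ on $\overline{(\R_+)^m}$ and rule out boundary maxima by a two-step contradiction argument (Lemma \ref{lem: max interno}) with a quantitative bound $|\mf{t}^{\mf{B}}|\le C_1$. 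Second, your concluding step ``checking $s_h\le 1$'' from $\sum_k m_{hk}\sigma_k \le \sum_k m_{hk}$ does not follow when the $m_{hk}$, $k\neq h$, have mixed signs; the paper instead compares $J_{\mf{B}}$ at the rescaled point with $c_{\mf{B}}$, exploits the symmetry of $\mf{M}$ to force a chain of inequalities into equalities, and concludes $\mf{M}(\mf{t}^{\mf{B}}-\mf{1})=\mf{0}$, whence $\mf{t}^{\mf{B}}=\mf{1}$ by invertibility. In the purely competitive case \eqref{relation pure competition} your shortcuts do work (this is essentially Proposition \ref{prop: constraint E omitted}), but the whole point of Theorem \ref{thm:main1} is to allow small positive couplings and arbitrarily large negative ones in $\mathcal{K}_2$, and there your argument as written breaks down.
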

The proof proceeds by direct minimization. With respect to the known results in the literature, several complications arise by the fact that, in presence of simultaneous cooperation and competition, it is very difficult to control the fourth order term in the expression of $J_{\mf{B}}$. This is the main motivation for the introduction of the constraint $\mathcal{E}_{\mf{B}}$, and we refer the reader to the remarks of Section \ref{sec: semi-trivial} for more details. 
%%
%%\begin{itemize}
%%\item[($a$)] there exists a universal constant $\bar C>0$ such that 
%%\[
%%\sum_{i=1}^d \|u_i^{\mf{B}}\|_i^2 \le \bar C\left(\frac{ \left(1+\max_i |V_i|_\infty\right)^2}{\min_i \mu_i} + 1 \right)=: \overline{\gamma} \quad \text{and} \quad J_{\mf{B}}(\mf{u}^{\mf{B}}) \le   \overline{\gamma};
%%\]
%%\item[($b$)] for every $h=1,\dots,m$ there exists $i_h \in I_h$ such that $u^{\mf{B}}_{i_h} \not \equiv 0$ in $\Omega$; to be precise
%%\[
%% \frac{1}{\displaystyle d \left( \max_{(i,j) \in \mathcal{K}_1} |\beta_{ij}|_\infty+ \max_i |\beta_{ii}|_\infty\right)} \left( S- \frac{ \overline{\gamma}}{S} \max_{(i,j) \in \mathcal{K}_2} |\beta_{ij}^+|_\infty \right)=: \underline{\gamma}
%%\]
%%\item[($c$)] it results
%%\[
%%\sum_{i,j=1}^d \int_{\Omega}   \beta_{ij}^-(x) \left(u_i^{\mf{B}} u_j^{\mf{B}}\right)^2 \le \left( \max_{(i,j) \in\{1,\dots,d\}^2} |\beta_{ij}^+|_\infty\right)\frac{\overline{\gamma}^2}{S^2}=: \bar M.
%%\]
%%\end{itemize}
%\end{theorem}

\begin{remark}
1) We can determine explicitly the dependence of $\overline{\gamma},\underline{\gamma}$, and $\bar M$ on $\beta_{ii},V_i$, and $\max_{(i,j) \in \mathcal{K}_1} |\beta_{ij}|_\infty$, see Lemmas \ref{lem: upper bounds}-\ref{lem: bound competition}. This will be crucial for the forthcoming results.\\
2) As already observed, any weak solution of \eqref{sysx} belongs to $(\mathcal{C}^{1,\alpha}(\Omega))^d$ for every $\alpha \in (0,1)$. Thus, by the maximum principle either $u_i^{\mf{B}} \equiv 0$, or $u_i^{\mf{B}}>0$ in $\Omega$.\\
3) Note that $\beta_{ij}$ can change sign for every $(i,j) \in \mathcal{K}_2$. This is particularly meaningful if one is interested in \eqref{sysx} as a model in population dynamics.
\end{remark}

We propose the following intuitive interpretation for Theorem \ref{thm:main1}. A $m$-decomposition of $d$ induces a separation of the $d$ components in $m$ different ``groups"; inside any group the interaction between the components is cooperative (see the \eqref{h1}). Theorem \ref{thm:main1} says that if the cooperation between any pair of different groups is sufficiently small (as expressed by the upper bound \eqref{small_coop_ass}), then it is possible to minimize $J_{\mf{B}}$ on the constraint $\mathcal{N}_{\mf{B}} \cap \mathcal{E}_{\mf{B}}$, obtaining, by Proposition \ref{prop: natural constraint}, a solution of \eqref{sysx} such that at least one component of each group is nontrivial. If compared with the known results for the $2$ components system \eqref{systemdcomp}, Theorem \ref{thm:main1} corresponds to what one obtain by minimizing the energy functional on the classical Nehari manifold; as observed above, this provides only a semi-trivial solution, and to obtain a nontrivial one careful level estimates are necessary. 

Before proceeding, we observe that there is one case in which Theorem \ref{thm:main1} immediately establishes the existence of a positive solution: let $\mf{a}=(0,1,\dots,d-1,d)$ be the unique $d$-decomposition of $d$; in this setting $\mathcal{K}_1= \emptyset$ and $\mathcal{K}_2=\left\{\beta_{ij}: i \neq j\right\}$, and an application of the previous result implies the following.

%Theorem \ref{thm:main1} ensures that, for every $m \le d$, a semi-trivial solution of \eqref{sysx} with at least $m$ nontrivial components exists. Being interested in completely nontrivial solution, this is only a partial result. 

\begin{corollary}\label{corol:main2}
Let $d \ge 2$, and assume that \eqref{h0} holds. There exists $\bar K>0$, depending on $V_i$ and $\beta_{ii}$, such that if 
\begin{equation}\tag{h2*}\label{small_comp_ass_2}
\max_{i \neq j}  |\beta_{ij}^+|_{\infty}  < \bar K,
\end{equation}
then \eqref{sysx} has a positive solution $\mf{u}^{\mf{B}}$ which is a minimizer of $J_{\mf{B}}$ on $\mathcal{N}_{\mf{B}} \cap \mathcal{E}_{\mf{B}}$. 
\end{corollary}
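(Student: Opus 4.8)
The plan is to derive Corollary \ref{corol:main2} directly from Theorem \ref{thm:main1} by specializing to the finest possible decomposition. First I would take $\mf{a} = (0,1,\dots,d-1,d)$, the unique $d$-decomposition of $d$, so that $m = d$ and each index class $I_h = \{h\}$ is a singleton. With this choice, the ``intra-group'' pair set $\mathcal{K}_1 = \{(i,j) \in I_h^2 : i \neq j\}$ is empty (no singleton contains two distinct indices), and the ``inter-group'' pair set $\mathcal{K}_2 = \{(i,j) : i \neq j\}$ consists of all off-diagonal pairs. Hence hypothesis \eqref{h1} is vacuously satisfied, and the quantity $\max_{(i,j)\in\mathcal{K}_1}|\beta_{ij}|_\infty$ on which $\bar K$ depends is (by convention) absent, so that $\bar K$ depends only on $V_i$ and $\beta_{ii}$, exactly as claimed in the statement. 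Assumption \eqref{small_comp_ass_2}, $\max_{i\neq j}|\beta_{ij}^+|_\infty < \bar K$, is then precisely \eqref{small_coop_ass} for this decomposition.

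Next I would invoke Theorem \ref{thm:main1} to obtain a nonnegative minimizer $\mf{u}^{\mf{B}}$ of $J_{\mf{B}}$ on $\mathcal{N}_{\mf{B}} \cap \mathcal{E}_{\mf{B}}$ which is a solution of \eqref{sysx} with at least $m = d$ non-zero components; since there are only $d$ components, this forces \emph{all} components to be non-zero, i.e. $u_i^{\mf{B}} \not\equiv 0$ for every $i$. Finally, by the regularity remark (any weak solution lies in $(\mathcal{C}^{1,\alpha}(\Omega))^d$) together with the strong maximum principle applied to each scalar equation $-\Delta u_i^{\mf{B}} + V_i u_i^{\mf{B}} = \big(\sum_j \beta_{ij}(x) (u_j^{\mf{B}})^2\big) u_i^{\mf{B}}$ — rewriting the right-hand side as $c_i(x) u_i^{\mf{B}}$ with $c_i \in L^\infty$ and adding a large constant $\kappa$ to both sides so that $-\Delta u_i^{\mf{B}} + (V_i + \kappa) u_i^{\mf{B}} \ge 0$ with nonnegative, nontrivial $u_i^{\mf{B}}$ — I conclude $u_i^{\mf{B}} > 0$ in $\Omega$ for every $i$. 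Thus $\mf{u}^{\mf{B}}$ is a positive solution, and it is by construction a minimizer of $J_{\mf{B}}$ on $\mathcal{N}_{\mf{B}} \cap \mathcal{E}_{\mf{B}}$.

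I do not expect any genuine obstacle here: the corollary is essentially a bookkeeping specialization of Theorem \ref{thm:main1}, and the only point requiring a word of care is checking that the dependence of $\bar K$ stated in Theorem \ref{thm:main1} (on $\beta_{ii}$, $V_i$, and $\max_{(i,j)\in\mathcal{K}_1}|\beta_{ij}|_\infty$) collapses correctly when $\mathcal{K}_1 = \emptyset$ — which it does, since that maximum over an empty set simply drops out. The passage from ``at least $d$ nontrivial components'' to ``positive solution'' is the standard maximum-principle argument already flagged in Remark 2 following Theorem \ref{thm:main1}.
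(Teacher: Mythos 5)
Your proposal is correct and follows exactly the route the paper itself takes: specialize Theorem \ref{thm:main1} to the unique $d$-decomposition $\mf{a}=(0,1,\dots,d-1,d)$, note that $\mathcal{K}_1=\emptyset$ (so \eqref{h1} is vacuous and $\bar K$ depends only on $V_i$ and $\beta_{ii}$) while $\mathcal{K}_2$ is the full set of off-diagonal pairs, and conclude positivity of all $d$ components via regularity and the strong maximum principle. No gaps.
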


\begin{remark}
To understand to what extent Corollary \ref{corol:main2} is new, let us consider for a moment the case of constant parameters $\lambda_i$ and $\beta_{ij}$. \\
1) For $\mf{a}=(0,1,\dots,d-1,d)$ it results $\mathcal{N}_{\mf{B}}= \mathcal{M}_{\mf{B}}$, see Remark \ref{rem: similarity}. The minimization of $J_{\mf{B}}$ and on $\mathcal{M}_{\mf{B}}$ has been already considered, in case of constant parameters $\beta_{ij}$, for systems of more than $2$ components, see \cite{LinWeiRNerr,LinWeiRN,LiWe2} (either in $\R^N$ on in bounded domains; the results can be trivially extended in the complementary situations by working, in case of $\Omega=\R^N$, in sets of radial functions). In the quoted papers the authors could obtain the existence of a minimal solution only for \emph{purely cooperative systems with small cooperation} (that is $\beta_{ij}>0$ for every $i \neq j$, small coupling parameters, and the positive definitiveness of the matrix $(\beta_{ij})_{i,j}$, see \cite{LinWeiRNerr,LinWeiRN}) or for \emph{purely competitive systems} (that is $\beta_{ij} \le 0$ for every $i \neq j$, see \cite{LiWe2}). The most relevant new feature of Corollary \ref{corol:main2} stays in the possibility of considering systems with mixed coupling parameters, without any constraint on the matrix $(\beta_{ij})$, obtaining the existence of a minimizer of $J_{\mf{B}}$ on $\mathcal{E}_{\mf{B}} \cap \mathcal{M}_{\mf{B}}$ under the only assumption that each $\beta_{ij}$ with $i \neq j$ \emph{has small positive part}. \\
2) The existence of at least one solution (neither necessarily positive, nor minimal) for systems with mixed coupling parameters has been proved in \cite{LiWa08}, but therein the authors assumed that $|\beta_{ij}|$ is small for every $i \neq j$. Corollary \ref{corol:main2} enlarges the set of mixed couplings for which a bound state of \eqref{systemdcomp} exists: it is sufficient to require that the positive part of $\beta_{ij}$ is sufficiently small for every $i \neq j$. If we are interested in positive solutions, we point out that such a result is optimal in many situations: for instance, if there exist $i$ and $j$ such that $\lambda_{i} \ge \lambda_{j}$ and $\beta_{ i i} < \beta_{ j  j}$, then whenever $\beta_{i j} \in \left( \beta_{ i i}, \beta_{ j  j}\right)$ a positive solution of \eqref{systemdcomp} cannot exist. \\
3) Another original feature of Corollary \ref{corol:main2} is represented by the possibility of considering non-constant coupling parameters, which \emph{can also change sign in $\Omega$}, provided their positive parts remain sufficiently small. As far as we know, this is new also for the $2$ component system.
\end{remark}

It is quite natural to ask whether or not the semi-trivial solutions we found are good candidates to be least energy positive solutions. We recall that a least energy positive solution $\mf{u}$ of \eqref{sysx} is a solution having minimal energy among all the positive solutions:
\[
J_{\mf{B}}(\mf{u}) =\inf\{J_{\mf{B}}(\mf{v}):  \nabla J_{\mf{B}}(\mf{v}) = 0 \text{ and } v_i > 0 \text{ for every $i$}\}.
\]

\begin{proposition}\label{prop: constraint E omitted}
Under the assumptions of Theorem \ref{thm:main1}, let us suppose that
\begin{equation}\label{relation pure competition}
\beta_{ij} \le 0 \qquad \text{a.e. in $\Omega$, for every $(i,j) \in \mathcal{K}_2$}.
\end{equation}
Then the infimum of $J_{\mf{V},\mf{B}}$ constrained on $\mathcal{N}_{\mf{V},\mf{B}}$ is achieved, any minimizer $\mf{u}^{\mf{B}}$ belongs to $\mathcal{E}_{\mf{B}}$, and in particular
\[
\inf_{\mathcal{N}_{\mf{V},\mf{B}}} J_{\mf{V},\mf{B}} = \inf_{\mathcal{N}_{\mf{V},\mf{B}} \cap \mathcal{E}_{\mf{B}}} J_{\mf{V},\mf{B}}.
\]
\end{proposition}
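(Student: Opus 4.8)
The plan is to reduce the statement entirely to Theorem~\ref{thm:main1} by observing that, under the additional hypothesis \eqref{relation pure competition}, the constraint $\mathcal{E}_{\mf{B}}$ is automatically fulfilled on $\mathcal{N}_{\mf{B}}$; that is, $\mathcal{N}_{\mf{B}} \subseteq \mathcal{E}_{\mf{B}}$. Once this inclusion is available we have $\mathcal{N}_{\mf{V},\mf{B}} \cap \mathcal{E}_{\mf{B}} = \mathcal{N}_{\mf{V},\mf{B}}$, so the identity of the two infima is immediate; the minimizer produced by Theorem~\ref{thm:main1} on $\mathcal{N}_{\mf{B}} \cap \mathcal{E}_{\mf{B}}$ is then a minimizer on $\mathcal{N}_{\mf{B}}$, whence the infimum over $\mathcal{N}_{\mf{B}}$ is achieved; and any minimizer of $J_{\mf{B}}$ on $\mathcal{N}_{\mf{B}}$ lies in $\mathcal{N}_{\mf{B}} = \mathcal{N}_{\mf{B}} \cap \mathcal{E}_{\mf{B}} \subseteq \mathcal{E}_{\mf{B}}$. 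Note that Theorem~\ref{thm:main1} is applicable here because \eqref{relation pure competition} forces $\beta_{ij}^+ \equiv 0$ for every $(i,j) \in \mathcal{K}_2$, so \eqref{small_coop_ass} holds with any choice of $\bar K$.

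To establish $\mathcal{N}_{\mf{B}} \subseteq \mathcal{E}_{\mf{B}}$, I would fix $\mf{u} \in \mathcal{N}_{\mf{B}}$ and inspect the symmetric matrix $\mf{M}(\mf{B},\mf{u})$ entry by entry. First, for $h \neq k$, every pair $(i,j) \in I_h \times I_k$ belongs to $\mathcal{K}_2$, hence $\beta_{ij} \le 0$ a.e.\ by \eqref{relation pure competition}, so the off-diagonal entry satisfies $M_{hk} \le 0$. Second, the diagonal entry $M_{hh} = \sum_{i \in I_h} \int_\Omega \beta_{ii} u_i^4 + \sum_{\substack{i,j \in I_h \\ i \neq j}} \int_\Omega \beta_{ij} u_i^2 u_j^2$ is strictly positive: the mixed terms are nonnegative by \eqref{h1}, while $\sum_{i\in I_h} \int_\Omega \beta_{ii} u_i^4 \ge \sum_{i \in I_h} \mu_i |u_i|_4^4 > 0$, the last inequality holding because $\|\mf{u}_h\|_h \neq 0$ forces at least one $u_i$ with $i \in I_h$ to be nontrivial. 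Third, the conditions defining $\mathcal{N}_{\mf{B}}$ are precisely a statement about the row sums of $\mf{M}(\mf{B},\mf{u})$: since $\sum_{i \in I_h} \pa_i J_{\mf{B}}(\mf{u}) u_i = \|\mf{u}_h\|_h^2 - \sum_{k=1}^m M_{hk}$, membership in $\mathcal{N}_{\mf{B}}$ says exactly that $\sum_{k=1}^m M_{hk} = \|\mf{u}_h\|_h^2 > 0$ for every $h$. Combining the three observations, for each $h$
\[
|M_{hh}| - \sum_{k \neq h} |M_{hk}| = M_{hh} + \sum_{k \neq h} M_{hk} = \sum_{k=1}^m M_{hk} = \|\mf{u}_h\|_h^2 > 0,
\]
which is exactly strict diagonal dominance, so $\mf{u} \in \mathcal{E}_{\mf{B}}$.

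I do not expect a real obstacle here: the argument is purely algebraic once the entries of $\mf{M}(\mf{B},\mf{u})$ have been identified, and it reduces to the elementary fact that a real symmetric matrix with positive diagonal, nonpositive off-diagonal entries, and positive row sums is strictly diagonally dominant. The only point requiring a little care is checking that the positivity of the diagonal entries (hence of the diagonal slacks above) comes for free: this uses both \eqref{h1} and the fact that on $\mathcal{N}_{\mf{B}}$ each block $\mf{u}_h$ is nonzero, so $|u_i|_4 > 0$ for some $i \in I_h$. Everything else is then bookkeeping plus an appeal to Theorem~\ref{thm:main1}.
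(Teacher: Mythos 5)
Your proof is correct, and the central computation is sound: for $\mf{u} \in \mathcal{N}_{\mf{B}}$ the Nehari identity gives $\sum_{k=1}^m \mf{M}(\mf{B},\mf{u})_{hk} = \|\mf{u}_h\|_h^2 > 0$, while \eqref{relation pure competition} forces $\mf{M}(\mf{B},\mf{u})_{hk} \le 0$ for $k \neq h$, so $\mf{M}(\mf{B},\mf{u})_{hh} - \sum_{k \neq h} |\mf{M}(\mf{B},\mf{u})_{hk}| = \|\mf{u}_h\|_h^2 > 0$ and $\mathcal{N}_{\mf{B}} \subseteq \mathcal{E}_{\mf{B}}$. (Your separate verification that $\mf{M}_{hh}>0$ via \eqref{h1} is not even needed: it already follows from the positive row sum and the nonpositive off-diagonal entries.) Your route differs from the paper's in structure, though the algebraic kernel is the same. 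The paper does not use the inclusion $\mathcal{N}_{\mf{B}} \subseteq \mathcal{E}_{\mf{B}}$ to collapse the two constrained problems; instead it reruns the direct method on $\mathcal{N}_{\mf{B}}$: it takes a minimizing sequence for $d_{\mf{B}} = \inf_{\mathcal{N}_{\mf{B}}} J_{\mf{B}}$, extracts a weak limit $\mf{u}^{\mf{B}}$ satisfying only the inequality \eqref{inequ Nehari limite}, applies the same diagonal-dominance computation to that limit (using Lemma \ref{lem: lower bound forte} to guarantee $\|\mf{u}_h^{\mf{B}}\|_h^2 \ge S\underline{\gamma} > 0$, which is needed there precisely because the limit is not yet known to lie in $\mathcal{N}_{\mf{B}}$), and then repeats the maximization of $\Psi_{\mf{B}}$ and the argument of Lemma \ref{lem: limite in nehari} to show the limit belongs to $\mathcal{N}_{\mf{B}}$. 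Your version is shorter and makes the redundancy of the constraint $\mathcal{E}_{\mf{B}}$ completely transparent: under \eqref{relation pure competition} it is a pointwise consequence of membership in $\mathcal{N}_{\mf{B}}$, so $\mathcal{N}_{\mf{B}} \cap \mathcal{E}_{\mf{B}} = \mathcal{N}_{\mf{B}}$ and everything follows from Theorem \ref{thm:main1}, which indeed applies since $\beta_{ij}^+ \equiv 0$ on $\mathcal{K}_2$ makes \eqref{small_coop_ass} vacuous. What the paper's longer route buys is a self-contained treatment of the minimizing sequence for $d_{\mf{B}}$ itself; for the statement as given, your reduction loses nothing.
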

This means that, if the relations between different groups of components is purely competitive, then the constraint $\mathcal{E}_{\mf{B}}$ can be omitted, since it is somehow automatically included in $\mathcal{N}_{\mf{B}}$ (at least if we are interested in the research of minima). On the other hand, we point out that the role of $\mathcal{E}_{\mf{B}}$ is crucial in obtaining Theorem \ref{thm:main1} without assumption \eqref{relation pure competition} and, in particular, Corollary \ref{corol:main2}.

In what follows we assume that \eqref{relation pure competition} is satisfied, and, to obtain further existence results, we aim at finding suitable assumptions on the data $\mf{V}$ and $\mf{B}$ implying that
\begin{equation}\label{th variazionale}
\inf_{\mathcal{N}_{\mf{V},\mf{B}}}  J_{\mf{V},\mf{B}} <
\inf\left\{J_{\mf{V},\mf{B}}(\mf{u}) \left| \begin{array}{l}
\mf{u} \in \mathcal{N}_{\mf{V},\mf{B}}
\text{ and there exists} \\
 \text{$i =1,\dots,d$ such that $u_i \equiv 0$}
\end{array} \right.\right\}.
\end{equation}
If this condition holds, by minimality the solution $\mf{u}^{\mf{B}}$ found in Theorem \ref{thm:main1} has all positive components. In particular, this implies by Proposition \ref{prop: constraint E omitted} that it is a least energy positive solution of \eqref{sysx}.

%This means that a connected component of the set $\{\beta_{ij} \text{ with $i \neq j$}: \text{a positive solution of \eqref{systemdcomp} exists}\}$ is contained in $\{\beta_{ij} \text{ with $i \neq j$}: \beta_{ij}< \min\{\beta_{ii}, \beta_{jj}\} \}$. 
% a connected component of the set
%\[
%\left\{ (\beta_{ij})_{i,j=1,\dots,d, i \neq j} \left|\begin{array}{l}
%\text{system \eqref{systemdcomp} with prescribed $\mf{\lambda}$ and $\beta_{ii}$}\\
%\text{has a completely positive solution}
%\end{array} \right.  \right\}
%\]
%is of type $\beta_{ij}^+ \le K$ for every $i \neq j$. 

\begin{theorem}\label{thm:main4}
Let $d \ge 2$, let $\mf{a}$ be a $m$-decomposition of $d$ for some $m \le d$. Let $\beta_{ii}$ as in assumption \eqref{h0}. For $h=1,\dots,m$, let $\tilde V_h, \tilde \beta_h \in L^\infty(\Omega)$ be such that 
\begin{equation}\tag{h3}\label{h3}
\text{$\tilde V_h \ge 0 \quad $ and $\quad \displaystyle \tilde \beta_h > \max_{i \in I_h} |\beta_{ii}|_\infty \quad $ a.e. in $\Omega$}.
\end{equation}
There exist $\delta,b>0$, depending on $\beta_{ii}, \tilde V_h, \tilde \beta_h$, such that if for every $h$ it results
\begin{equation}\tag{h4}\label{h4}
\begin{split}
&\text{$V_i \ge 0$ a.e. in $\Omega$ and $|V_i - \tilde V_h|_\infty < \delta$ for every $i \in I_h$, for every $h$},\\
&\text{$\beta_{ij} \ge 0$ a.e. in $\Omega$ and $|\beta_{ij}-\tilde \beta_h|_\infty < \delta$ for every $(i,j) \in I_h^2$ with $i \neq j$},
\end{split}
\end{equation}
and
\begin{equation}\tag{h5}\label{h5}
\text{$\beta_{ij} \le - b$ a.e. in $\Omega$ for every $(i,j) \in \mathcal{K}_2$},
\end{equation}
then \eqref{th variazionale} holds, and consequently system \eqref{sysx} has a least energy positive solution $\mf{u}^{\mf{B}}$.
\end{theorem}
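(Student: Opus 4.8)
The plan is to establish the strict inequality \eqref{th variazionale}; the theorem follows at once from it. Indeed, \eqref{h4} forces $\beta_{ij}\ge 0$ a.e.\ for $(i,j)\in\mathcal{K}_1$, so \eqref{h1} holds, and \eqref{h5} gives $\beta_{ij}^+\equiv 0$ for $(i,j)\in\mathcal{K}_2$, so \eqref{small_coop_ass} holds trivially; hence Theorem~\ref{thm:main1} produces a nonnegative minimizer $\mf{u}^{\mf{B}}$ of $J_{\mf{B}}$ on $\mathcal{N}_{\mf{B}}\cap\mathcal{E}_{\mf{B}}$, and since \eqref{h5} implies \eqref{relation pure competition}, Proposition~\ref{prop: constraint E omitted} shows that $\mf{u}^{\mf{B}}$ is in fact a minimizer of $J_{\mf{B}}$ on all of $\mathcal{N}_{\mf{B}}$. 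If \eqref{th variazionale} holds then no minimizer of $J_{\mf{B}}$ on $\mathcal{N}_{\mf{B}}$ can have a trivial component; since any weak solution is of class $\mathcal{C}^{1,\alpha}$, the strong maximum principle gives $u_i^{\mf{B}}>0$ in $\Omega$ for every $i$, and, every positive solution lying on $\mathcal{N}_{\mf{B}}$, Proposition~\ref{prop: constraint E omitted} yields that $\mf{u}^{\mf{B}}$ is a least energy positive solution.

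To prove \eqref{th variazionale} I would argue by contradiction, letting the parameters degenerate. If it failed for every $\delta,b>0$, then, choosing $\delta_n\to 0$ and $b_n\to\infty$, for each $n$ there would be $\mf{V}^n,\mf{B}_n$ satisfying \eqref{h4} with $\delta_n$ and \eqref{h5} with $b_n$ and such that $\inf_{\mathcal{N}_n}J_n$ (with $J_n:=J_{\mf{B}_n}$, $\mathcal{N}_n:=\mathcal{N}_{\mf{B}_n}$) equals the infimum of $J_n$ over those $\mf{u}\in\mathcal{N}_n$ with some vanishing component; this latter infimum is attained (for each fixed $i$ it is a constrained minimization of the same type as in Theorem~\ref{thm:main1} for the subsystem with the $i$-th equation removed), so we get a minimizer $\mf{u}^n$ of $J_n$ on $\mathcal{N}_n$ with, after relabelling, $u^n_{i_0}\equiv 0$ for a fixed $i_0\in I_h$. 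Placing in $m$ fixed pairwise disjoint balls $B_k$ a ground state of the purely cooperative group-$k$ subsystem with coefficients $\mf{V}^n,\mf{B}_n$ gives a test function in $\mathcal{N}_n$ with bounded energy, so $c_n:=J_n(\mf{u}^n)=\inf_{\mathcal{N}_n}J_n$ is bounded; since $c_n=\tfrac14\sum_{k=1}^m\|\mf{u}^n_k\|_{\mf{V}^n_k}^2$ on $\mathcal{N}_n$, this bounds $\mf{u}^n$ in $\mathbb{H}$. Moreover, for every $\mf{u}\in\mathcal{N}_n$, combining $\|\mf{u}_h\|_{\mf{V}^n_h}^2\le\int_\Omega\sum_{(i,j)\in I_h^2}\beta^n_{ij}u_i^2u_j^2\le C\bigl(\sum_{i\in I_h}|u_i|_4^2\bigr)^2$ (the first inequality using $\beta^n_{ij}\le 0$ on $\mathcal{K}_2$, with $C$ controlled uniformly since the $\beta^n_{ij}$ with $(i,j)\in I_h^2$ stay bounded) with $\|\mf{u}_h\|_{\mf{V}^n_h}^2\ge S\sum_{i\in I_h}|u_i|_4^2$ yields the uniform lower bound $\sum_{i\in I_h}|u^n_i|_4^2\ge S/C=:\gamma_0>0$. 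Passing to $\mf{u}^n\rightharpoonup\mf{u}^\ast$ in $\mathbb{H}$, strongly in $(L^4(\Omega))^d$, and invoking the phase-separation analysis for solutions in a not completely competitive framework (with the groups $I_1,\dots,I_m$ playing the role of super-species), one obtains that $\mf{u}^\ast$ is continuous, the sets $\Omega_k:=\{|\mf{u}^\ast_k|>0\}$ are open and pairwise disjoint, and — by the uniform bound $\gamma_0$ — each $\mf{u}^\ast_k\not\equiv\mf{0}$, while $u^\ast_{i_0}\equiv 0$.

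The key structural input is a \emph{symmetry-breaking estimate} for the within-group cooperative problem: for every bounded open $D$, writing $\underline{c}_h(D)$ (resp.\ $\underline{c}^{(i_0)}_h(D)$) for the least value of $\tfrac14\|\cdot\|^2$ over the Nehari manifold of the group-$h$ subsystem on $D$ with coefficients $\tilde V_h$, $\tilde\beta_h$ off the diagonal and $\beta_{ii}$ on it (resp.\ over its subset $\{u_{i_0}\equiv 0\}$), one has $\underline{c}_h(D)<\underline{c}^{(i_0)}_h(D)$. Indeed, a minimizer $\mf{v}$ for $\underline{c}^{(i_0)}_h(D)$ is a solution of the $i_0$-killed cooperative system with some $v_{i_1}\not\equiv 0$; turning on the $i_0$-component as $tv_{i_1}$ and rescaling the whole group back onto the full Nehari manifold produces a curve along which the energy equals $\underline{c}^{(i_0)}_h(D)+\tfrac{t^2}{2}\bigl(\|v_{i_1}\|^2-\int_D\sum_{j\neq i_0}\tilde\beta_h\,v_{i_1}^2v_j^2\bigr)+O(t^4)$, and by the equation satisfied by $v_{i_1}$ the bracket equals $-\int_D(\tilde\beta_h-\beta_{i_1i_1})v_{i_1}^4$, which is $<0$ precisely because $\tilde\beta_h>|\beta_{i_1i_1}|_\infty$ by \eqref{h3}. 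A routine continuity argument in the $L^\infty$-coefficients also gives $\underline{c}_h(D;\mf{V}^n,\mf{B}_n)\to\underline{c}_h(D)$ and $\underline{c}^{(i_0)}_h(D;\mf{V}^n,\mf{B}_n)\to\underline{c}^{(i_0)}_h(D)$.

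To conclude, note that $\mf{u}^\ast_h$, restricted to $\Omega_h$, belongs to $(H^1_0(\Omega_h))^{|I_h|}$ and is subcritical for the $i_0$-killed group-$h$ Nehari relation on $\Omega_h$ (from weak lower semicontinuity of the norms together with $\beta^n_{ij}\le 0$ on $\mathcal{K}_2$), so $\tfrac14\|\mf{u}^\ast_h\|^2\ge\underline{c}^{(i_0)}_h(\Omega_h)>\underline{c}_h(\Omega_h)$. Let $\mf{g}^n_h$ be a ground state of the full group-$h$ cooperative system on $\Omega_h$ with coefficients $\mf{V}^n,\mf{B}_n$, so $\tfrac14\|\mf{g}^n_h\|_{\mf{V}^n_h}^2\to\underline{c}_h(\Omega_h)$, and form the configuration obtained from $\mf{u}^\ast$ by replacing its $h$-th group by $\mf{g}^n_h$: since $\mf{g}^n_h$ vanishes outside $\Omega_h$ and the $\Omega_k$ are disjoint open sets, all $\mathcal{K}_2$-coupling terms vanish, so rescaling each group onto $\mathcal{N}_n$ (the factor for group $h$ being $1$, those for $k\ne h$ tending to $1$) produces $\mf{z}^n\in\mathcal{N}_n$ with $J_n(\mf{z}^n)\to\underline{c}_h(\Omega_h)+\sum_{k\ne h}\tfrac14\|\mf{u}^\ast_k\|^2$. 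But weak lower semicontinuity gives $\liminf_n c_n\ge\tfrac14\sum_k\|\mf{u}^\ast_k\|^2\ge\underline{c}^{(i_0)}_h(\Omega_h)+\sum_{k\ne h}\tfrac14\|\mf{u}^\ast_k\|^2>\lim_n J_n(\mf{z}^n)\ge\limsup_n c_n$, a contradiction; hence \eqref{th variazionale} holds. I expect the main obstacle to be the phase-separation step — proving that the weak limit of the minimizers is continuous with mutually disjoint group-supports, i.e.\ that the strongly competitive coupling does not leak into the limiting configuration — which is the analytic heart of the matter and relies on the phase-separation theory for not completely competitive systems; the remaining ingredients (the uniform a priori bounds, the continuity of the reduced least-energy levels in the coefficients, and the second-order expansion behind the symmetry-breaking estimate) are comparatively routine.
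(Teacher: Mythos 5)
Your overall strategy (contradiction along degenerating parameters $\delta_n\to 0$, $b_n\to\infty$, extraction of minimizers with a fixed vanishing component, and a second--order ``symmetry--breaking'' computation whose sign is governed by $\int(\tilde\beta_h-\beta_{i_1 i_1})v_{i_1}^4<0$, i.e.\ by \eqref{h3}) is exactly the engine of the paper's proof, and your expansion of the energy when the $i_0$-component is turned on as $t v_{i_1}$ is the same computation that the paper performs. However, the route you take to exploit it contains a genuine gap: you run the comparison \emph{at the level of the limit problem}, and for that you need the limiting profile $\mf{u}^\ast$ to be continuous with \emph{open}, pairwise disjoint group supports $\Omega_k$, and you need $\mf{u}^\ast_h\in (H_0^1(\Omega_h))^{|I_h|}$ so that the reduced levels $\underline{c}_h(\Omega_h)$, $\underline{c}_h^{(i_0)}(\Omega_h)$ make sense and so that the glued competitor $\mf{z}^n$ has vanishing $\mathcal{K}_2$-interactions. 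Theorem \ref{thm:main3} delivers only strong $\mathbb{H}$-convergence and $u_i^\infty u_j^\infty=0$ a.e.\ for $(i,j)\in\mathcal{K}_2$; it gives neither continuity of the limit nor any free-boundary structure, and the paper explicitly lists the regularity of $\mf{u}^\infty$ and of the free boundary as an \emph{open} question (Section \ref{sec: further}). In the not-completely-competitive setting considered here, the uniform H\"older estimates of the type proved in \cite{NoTaTeVe} are not available off the shelf, so the step you yourself identify as ``the analytic heart of the matter'' cannot be outsourced; as written, the proof does not close.

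The paper avoids this entirely by never leaving the finite-$n$ level until the very last moment: since $\mf{u}^n$ is a free critical point and a constrained minimizer, Remark \ref{rem: constrained differential} gives $D^2 J_{\mf{B}^n}(\mf{u}^n)[\mf{v}^n,\mf{v}^n]\ge 0$ for every tangent vector, and the choice $v^n_{\bar i}=u^n_{\bar j}$ (all other components zero, admissible as a tangent direction precisely because $u^n_{\bar i}\equiv 0$) yields the pointwise-in-$n$ inequality
\[
0\le \|u_{\bar j}^n\|_{V_{\bar i}^n}^2-\sum_{k\neq \bar i}\int_{\Omega}\beta_{\bar i k}^n\left(u_{\bar j}^n u_k^n\right)^2 .
\]
Passing to the limit then requires only the $L^1$-vanishing of the competition terms $(\beta_{ij}^n)^-(u_i^n u_j^n)^2$ and the strong convergence from Theorem \ref{thm:main3} --- no continuity, no open supports, no domain-dependent ground-state levels. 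Combining the limit of this inequality with the limit of the equation for $u_{\bar j}^n$ tested against itself produces $0\le\int_{\Omega}(\beta_{\bar j\bar j}-\tilde\beta_{\bar h})(u_{\bar j}^\infty)^4<0$, which is your symmetry-breaking inequality obtained without any free-boundary input. I would recommend recasting your argument in this form: keep your second-order expansion, but implement it as a sign condition on the constrained Hessian at $\mf{u}^n$ rather than as a comparison of least-energy levels on the (unproven) nodal domains of the limit.
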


For the proof, we have to suppose that the potentials and the coupling parameters which relate components of the same group are close together, see the \eqref{h4}. This assumption can be dropped if inside any group there are at most two components.

\begin{theorem}\label{thm:theorem 6}
Let $\mf{a}$ be a $m$-decomposition of $d$ such that $a_h-a_{h-1} \le 2$ for every $h=1,\dots,m$. Under \eqref{h0}, assume that
\begin{equation}\tag{h6}\label{ass strong coop 2}
\beta_{ij} > C_{ij} |\beta_{ii}|_\infty  \quad  \text{a.e. in $\Omega$, for every $(i,j) \in \mathcal{K}_1$}.
\end{equation}
Here $C_{ij} \ge 1$ is the best constant such that the inequality $\| u\|_i \le C \|u\|_j$ holds for every $u \in H_0^1(\Omega)$. Then there exists $b'>0$, depending on $V_i$, $\beta_{ii}$ and $\beta_{ij}$ with $(i,j) \in \mathcal{K}_1$, such that if
\begin{equation}\tag{h7}\label{h7}
\text{$\beta_{ij} \le - b'$ a.e. in $\Omega$, for every $(i,j) \in \mathcal{K}_2$},
\end{equation}
then \eqref{th variazionale} holds. As a consequence, system \eqref{sysx} has a least energy positive solution $\mf{u}^{\mf{B}}$.
\end{theorem}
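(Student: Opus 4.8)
The goal is to verify the strict inequality \eqref{th variazionale}, which by Theorem \ref{thm:main1} (whose hypotheses \eqref{h0}, \eqref{h1}, \eqref{small_coop_ass} are subsumed here, possibly after observing that \eqref{ass strong coop 2} and \eqref{h7} are compatible with them for $b'$ large) and Proposition \ref{prop: constraint E omitted} upgrades the minimizer on $\mathcal{N}_{\mf{B}}$ to a least energy positive solution. The plan is to take a minimizing sequence, or rather a suitable fixed competitor, for the right-hand infimum in \eqref{th variazionale}, and perturb it in the directions that are switched off, showing that the energy strictly decreases. Concretely, suppose $\mf{u} \in \mathcal{N}_{\mf{B}}$ realizes (or nearly realizes) the constrained infimum among vectors with some vanishing component; after reindexing, there is a group $I_{h_0}$ and a component $u_{i_0} \equiv 0$ with $i_0 \in I_{h_0}$. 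Since $a_{h_0}-a_{h_0-1} \le 2$, the group $I_{h_0}$ has at most two components, so either the whole group is trivial, or exactly one other component $u_{k_0}$ of the group is nontrivial.

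The core computation is the following. Starting from such a $\mf{u}$, I would switch on $u_{i_0}$ by adding a small multiple $t \varphi$ of a fixed positive test function (for instance the positive first eigenfunction, suitably normalized), then reproject the perturbed vector back onto $\mathcal{N}_{\mf{B}}$ — this is possible and the projection depends smoothly on $t$, because of the fibering/scaling structure of $\mathcal{N}_{\mf{B}}$ already exploited in the proof of Theorem \ref{thm:main1}. The change in energy along this path has the form
\[
J_{\mf{B}}(\text{projected perturbation}) = J_{\mf{B}}(\mf{u}) + c_1 t^2 \|\varphi\|_{i_0}^2 - c_2 t^2 \!\!\int_\Omega \beta_{i_0 k_0} u_{k_0}^2 \varphi^2 + o(t^2),
\]
with $c_1,c_2>0$ depending only on the scaling exponents. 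The crucial point is that the coefficient of $t^2$ is $c_1 \|\varphi\|_{i_0}^2 - c_2 \int_\Omega \beta_{i_0 k_0} u_{k_0}^2 \varphi^2$, and assumption \eqref{ass strong coop 2}, namely $\beta_{i_0 k_0} > C_{i_0 k_0}|\beta_{i_0 i_0}|_\infty$ with $C_{i_0 k_0}$ the optimal constant in $\|\cdot\|_{i_0} \le C \|\cdot\|_{k_0}$, is exactly what forces this coefficient to be negative once one knows a quantitative lower bound on $u_{k_0}$. That lower bound comes from the estimates \eqref{estimates thm 1} of Theorem \ref{thm:main1}: $\sum_{i\in I_{h_0}}|u_i|_4^2 \ge \underline\gamma$ together with the upper bound $\sum_i\|u_i\|_i^2 \le \overline\gamma$ pins $u_{k_0}$ away from zero in a way depending only on $V_i,\beta_{ii}$ and $\beta_{ij}$ for $(i,j)\in\mathcal{K}_1$ — note that since the whole group is reduced to $\{i_0,k_0\}$ and $u_{i_0}\equiv 0$, the full mass $\underline\gamma$ sits on $u_{k_0}$. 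If instead the whole group $I_{h_0}$ is trivial, the argument is even simpler: switching on one component against an empty group produces no competition penalty inside the group and one compares against a scalar ground state, where $\|\varphi\|_{i_0}^2 < \tilde\beta$-type normalization closes it; here the smallness of the $\beta_{ij}^+$ on $\mathcal{K}_2$ (which we may also assume, increasing $b'$) ensures the cross-group terms do not spoil the sign.

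The remaining bookkeeping is to control the influence of the competitive cross-group couplings $\beta_{ij}$, $(i,j)\in\mathcal{K}_2$, on the reprojection and on the $o(t^2)$ remainder. Since \eqref{h7} makes these $\beta_{ij}\le -b'$, they only help: turning on a new positive bump in group $h_0$ against negative-coupled components in other groups strictly lowers the quartic energy, and in the reprojection it strictly raises the scaling factors of the other blocks only in a way that, by the structure of $\mf{M}(\mf{B},\mf{u})$ and the diagonal dominance guaranteed by $\mf{u}\in\mathcal{E}_{\mf{B}}$ (Proposition \ref{prop: constraint E omitted}), contributes negatively to $J_{\mf{B}}$ at second order. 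Thus $b'$ is chosen large enough that (i) the hypotheses of Theorem \ref{thm:main1} hold and (ii) the competitive terms dominate any residual positive contribution in the expansion.

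The main obstacle is the second step: making the perturbation-and-reprojection rigorous and extracting the sign of the $t^2$-coefficient in a way that genuinely uses \eqref{ass strong coop 2} with the \emph{optimal} constant $C_{ij}$ rather than a crude one. This requires carefully tracking how the Nehari scaling factors of all $m$ blocks move when one block is perturbed — a linear algebra computation with the matrix $\mf{M}(\mf{B},\mf{u})$ — and combining it with the quantitative non-degeneracy of $u_{k_0}$ from \eqref{estimates thm 1}. Everything else (compactness of the minimizing sequence, regularity, strict positivity via the maximum principle as in Remark after Theorem \ref{thm:main1}) is standard or already available from the earlier results.
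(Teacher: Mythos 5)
Your overall strategy (a second-order analysis of $J_{\mf{B}}$ on $\mathcal{N}_{\mf{B}}$ around a configuration with a vanishing component) points in the right direction, but two essential ingredients are missing or wrong, and they are precisely the ones the theorem hinges on. First, the perturbation direction cannot be a generic fixed $\varphi$ such as the first eigenfunction: the only choice that lets you exploit \eqref{ass strong coop 2} with the \emph{optimal} constant $C_{ij}$ is $\varphi=u_{\bar j}$, the other (nontrivial) component of the same two-element group. With $v_{\bar i}=u_{\bar j}$ and all other components zero, the (constrained = free, by Remark \ref{rem: constrained differential}) second variation reduces to $\|u_{\bar j}\|_{V_{\bar i}}^2-\sum_{k\neq\bar i}\int_\Omega\beta_{\bar i k}\,(u_{\bar j}u_k)^2$, whose in-group part is $\|u_{\bar j}\|_{V_{\bar i}}^2-\int_\Omega\beta_{\bar i\bar j}\,u_{\bar j}^4$; combining $\|u_{\bar j}\|_{V_{\bar i}}^2\le C_{\bar i\bar j}\|u_{\bar j}\|_{V_{\bar j}}^2$ with the identity $\|u_{\bar j}\|_{V_{\bar j}}^2=\int_\Omega\beta_{\bar j\bar j}\,u_{\bar j}^4$ (obtained by testing the $\bar j$-th equation with $u_{\bar j}$ in the segregated limit, where the cross-group terms have disappeared) yields the strictly negative quantity $\int_\Omega\bigl(C_{\bar i\bar j}\beta_{\bar j\bar j}-\beta_{\bar i\bar j}\bigr)u_{\bar j}^4$. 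With an arbitrary $\varphi$ you only get $\|\varphi\|_{V_{\bar i}}^2-\int_\Omega\beta_{\bar i\bar j}\,u_{\bar j}^2\varphi^2$, and no lower bound of the type $|u_{\bar j}|_4^2\ge\underline\gamma$ converts this into a negative quantity under \eqref{ass strong coop 2}; you flag this yourself as the ``main obstacle'', and it is exactly the step that constitutes the proof.

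Second, your sign analysis of the cross-group couplings is backwards. In the second variation these appear as $-\int_\Omega\beta_{\bar i k}(u_{\bar j}u_k)^2\ge 0$ when $\beta_{\bar i k}\le 0$: turning on a bump that overlaps components of other groups \emph{raises} the energy, since the interaction is repulsive. These terms therefore work against the desired strict inequality and cannot be dismissed as ``only helping''; the paper disposes of them by arguing by contradiction along a sequence $b'_n\to\infty$ and invoking the segregation result (Theorem \ref{thm:main3}), whose conclusion $\sum_{(i,j)\in\mathcal{K}_2}\int_\Omega(\beta_{ij}^n)^-(u_i^nu_j^n)^2\to 0$ is precisely what makes them vanish in the limit. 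Without this limiting procedure your fixed-$b'$ expansion does not close. A further structural remark: rather than improving a near-minimizer of the right-hand infimum in \eqref{th variazionale}, the paper assumes the global minimizer itself has a zero component and contradicts the second-order \emph{necessary} condition $D^2_{\mathcal{N}_{\mf{B}}}J_{\mf{B}}(\mf{u}^n)[\mf{v}^n,\mf{v}^n]\ge 0$, which avoids all reprojection estimates. (Also, the case in which the whole group $I_{h_0}$ is trivial cannot occur, since $\mf{u}\in\mathcal{N}_{\mf{B}}$ forces $\|\mf{u}_h\|_h^2\neq 0$ for every $h$.)
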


\begin{remark}
1) Concerning assumptions \eqref{small_comp_ass_2}-\eqref{h7}, we can observe that, by Proposition \ref{prop: non-ex} and by thinking at the known results in the literature, in order to find a positive solution of \eqref{sysx} it is quite natural to suppose that $\beta_{ij}$ is either sufficiently large, or sufficiently small, with respect to both $\beta_{ii}$ and $\beta_{jj}$. \\
2) In light also of Theorem \ref{thm:theorem 6}, we do not believe that assumption \eqref{h4} is necessary. However, also for purely cooperative systems of type \eqref{systemdcomp} with many components ($d \ge 3$) the existence of a completely nontrivial bound state requires strong restriction on the data: we refer to Theorem 2.1, and in particular to Remark 2.2 and Corollary 2.3 in \cite{LiWa10}, where assumptions similar to \eqref{h4} are considered. \\
3) The proof of Theorem \ref{thm:main4} can be straightforwardly employed to obtain existence results also for purely cooperative cases, leading to results similar to those of \cite{LiWa10} (let $\mf{a}$ be a $0$-decomposition of $d$, so that $\mathcal{N}_{\mf{B}}$ is simply the Nehari manifold). In this setting, the advantage is that our method can be applied also for non-autonomous problems, both in bounded domains and in $\R^N$.
%4) In Theorem \ref{thm:main4} the quantities $\delta$ and $b$ depend on $\tilde V_h,\beta_{ii}$ and $\tilde \beta_{h}$; some reader could prefer to obtain similar thresholds which depend only on $\tilde V_h$ and $\beta_{ii}$. This type of result can be obtained slightly modifying the assumptions and the proof of Theorem \ref{thm:main4}; we remind the interested reader to Theorem \ref{thm: main42}. An analogue variant of Theorem \ref{thm:theorem 6} is represented by Theorem \ref{thm: main62}.
\end{remark}

A crucial intermediate step between Theorems \ref{thm:main1} and \ref{thm:main4} is the description of the segregation phenomena which involve solutions to \eqref{sysx}. 
%As far as we know, this is the first attempt to analyse phase-separation in systems which are not necessarily completely competitive. 
%The reader may think that the uniform estimates ($a$)-($c$) in Theorem \ref{thm:main1} are maybe interesting, but not so useful to prove further existence results; what we are going to show is that, on the contrary, they can be exploited in order to derive existence of completely nontrivial solutions in several cases of simultaneous strong cooperation and strong competition. A crucial intermediate step is the comprehension 
Let $\mf{a}$ be a $m$-decomposition of $d$, and let us consider sequences $(\mf{V}^n) \subset  (L^\infty(\Omega))^d$, $(\mf{B}^n) \subset (L^\infty(\Omega))^{d^2}$, with $\mf{B}^n$ symmetric for every $n$, such that \eqref{h0} and \eqref{h1} holds, and %\begin{equation}\label{ass competition}
$\beta_{ij}^n \le 0$ a.e. in $\Omega$, for every $(i,j) \in \mathcal{K}_2$.
%\end{equation} 
Let $\mf{V}^\infty$ and $\mf{B}^\infty$ satisfying \eqref{h0} and \eqref{h1} as well. 
%(we use the notation $\mu_i^\infty$ to denote positive constants such that $\beta_{ii}^\infty \ge \mu_i^\infty$ a.e. in $\Omega$). 
Let us suppose that $\mf{V}^n \to \mf{V}^\infty$ in $(L^\infty(\Omega))^d$, $\beta_{ii}^n \to \beta_{ii}^\infty$ in $L^\infty(\Omega)$ for every $i$, $\beta_{ij}^n \to \beta_{ij}^\infty$ in $L^\infty(\Omega)$ for every $(i,j) \in \mathcal{K}_1$, and 
\begin{equation}\label{ass segr}
\beta_{ij}^n \to -\infty \quad \text{in $L^\infty(\Omega)$ as $n \to \infty$, for every $(i,j) \in \mathcal{K}_2$}.
\end{equation}

By Theorem \ref{thm:main1} and Proposition \ref{prop: constraint E omitted}, for every $n$ there exists a minimizer $\mf{u}^n$ for $J_{\mf{V}^n,\mf{B}^n}$ on $\mathcal{N}_{\mf{V}^n,\mf{B}^n}$. Thus we obtain a sequence $(\mf{u}^n)$ of solutions of \eqref{sysx} with potentials $\mf{V}^n$ and coupling matrix $\mf{B}^n$, with the corresponding estimates \eqref{estimates thm 1}. By the explicit expression of $\overline{\gamma}, \underline{\gamma}, \bar M$, it is immediate to check that such estimates are uniform in $n$:
\begin{equation}\label{stime al limite}
\sum_{i=1}^d \int_{\Omega} |\nabla u_i^n|^2 \le \overline{\gamma}^\infty, \quad \sum_{i\in I_h} |u_i^n|_4^2 \ge \underline{\gamma}^\infty, \quad \sum_{i,j=1}^d \int_{\Omega} (\beta_{ij}^n)^- (u_i^n u_j^n)^2 \le \bar M^{\infty}, 
\end{equation}
where $\overline{\gamma}^{\infty}, \underline{\gamma}^{\infty}, \bar M^{\infty}$ are independent of $n$. As a consequence, up to a subsequence $\mf{u}^n \wc \mf{u}^\infty$ in $\mathbb{H}$, and 
\[
u^\infty_i u_j^\infty = \lim_{n \to \infty} u_i^n u_j^n = 0 \quad \text{a.e. in $\Omega$, for every $(i,j) \in \mathcal{K}_2$}. 
\]
This means that if two components $u_i^n$ and $u_j^n$ belong to different teams, then they tend to have disjoint support in the limit as $n \to \infty$; this phenomenon, called \emph{phase separation} or \emph{segregation}, can be described with more accuracy. Let us introduce, for an arbitrary $\mf{u} \in \mathbb{H}$, the $m \times m$ diagonal matrix
\begin{equation}\label{def Minfty}
\left(\mf{M}_\infty(\mf{u})\right)_{hk}:= \begin{cases} \sum_{(i,j) \in I_h^2} \int_{\Omega} \beta_{ij}^\infty(x) u_i^2 u_j^2 & \text{if $h=k$} \\ 0 & \text{if $h \neq k$}, \end{cases}
\end{equation}
and the functional
\begin{equation}\label{limit functional}
\begin{split}
J_\infty(\mf{u}) &:= \int_{\Omega} \left[\frac{1}{2} \sum_{i=1}^d \left(|\nabla u_i|^2 + V_i^\infty(x) u_i^2\right) - \frac{1}{4}\sum_{h=1}^m \sum_{(i,j) \in I_h^2} \beta_{ij}^\infty(x) u_i^2 u_j^2 \right]\\
& = \frac{1}{2}\sum_{h=m}^d \|\mf{u}_h\|_{\mf{V}_h^\infty}^2 - \frac{1}{4} \mf{M}_\infty(\mf{u}) \mf{1} \cdot \mf{1}.
\end{split}
\end{equation}
Correspondingly, we introduce the limit Nehari-type manifold
\begin{equation}\label{Nehari limite}
\mathcal{N}_{\infty}:= \left\{ \mf{u} \in \mathbb{H}\left| \begin{array}{l}
\|\mf{u}_h\|_{\mf{V}_h^\infty}^2 \neq 0 \text{ and } \sum_{i \in I_h} \pa_i J_{\infty}(\mf{u})u_i = 0 \\
\text{for every $h=1,\dots,m$}
\end{array}\right.\right\}.
\end{equation}

\begin{theorem}\label{thm:main3}
Up to a  subsequence, $(\mf{u}^n)$ converges strongly in $\mathbb{H}$ to a limiting profile $\mf{u}^\infty$, which achieves
\begin{equation}\label{var char of limit}
\inf\left\{ J_\infty(\mf{u}) \left| \begin{array}{l}
\mf{u} \in \mathcal{N}_{\infty} \text{ and } \int_{\Omega} u_i^2 u_j^2 = 0\\
\text{for every $(i,j) \in \mathcal{K}_2$}
\end{array}\right.\right\} .
\end{equation}
Furthermore, the estimates \eqref{stime al limite} hold true for $\mf{u}^\infty$, and 
\[
\lim_{n \to \infty}  \sum_{(i,j) \in \mathcal{K}_2} \int_{\Omega}  \left(\beta_{ij}^n\right)^- \left(u_i^n u_j^n\right)^2 =0.
\]
\end{theorem}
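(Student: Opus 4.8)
The strategy is the standard one for phase separation (segregation) problems, adapted to the Nehari-type constraint. Start from the sequence $(\mf{u}^n)$ of minimizers, which by \eqref{stime al limite} is bounded in $\mathbb{H}$, so up to a subsequence $\mf{u}^n \wc \mf{u}^\infty$ weakly in $\mathbb{H}$, strongly in $(L^4(\Omega))^d$ (by the compact embedding $H_0^1(\Omega) \hookrightarrow L^4(\Omega)$, $N \le 3$), and a.e. in $\Omega$. From the competition estimate $\sum_{i,j}\int_\Omega (\beta_{ij}^n)^-(u_i^n u_j^n)^2 \le \bar M^\infty$ and \eqref{ass segr}, Fatou's lemma forces $\int_\Omega (u_i^\infty u_j^\infty)^2 = 0$ for every $(i,j)\in\mathcal{K}_2$, so $\mf{u}^\infty$ is an admissible competitor in \eqref{var char of limit}. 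The plan is then to split the proof into: (a) an upper bound $\limsup_n J_{\mf{B}^n}(\mf{u}^n) \le c_\infty$, where $c_\infty$ denotes the infimum in \eqref{var char of limit}; (b) a lower bound $\liminf_n J_{\mf{B}^n}(\mf{u}^n) \ge c_\infty$ together with strong convergence; and (c) a clean-up showing the competition term vanishes in the limit.

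\textbf{Upper bound.} Fix a competitor $\mf{v}\in\mathcal{N}_\infty$ with $\int_\Omega v_i^2 v_j^2 = 0$ for all $(i,j)\in\mathcal{K}_2$. Since the off-diagonal blocks of the coupling vanish on $\mf{v}$, one has $\int_\Omega \beta_{ij}^n v_i^2 v_j^2 = 0$ for such pairs, so $J_{\mf{B}^n}(\mf{v}) = \tfrac12\sum_h \|\mf{v}_h\|_{\mf{V}_h^n}^2 - \tfrac14 \mf{M}_\infty^n(\mf{v})\mf{1}\cdot\mf{1}$ with obvious notation, and $J_{\mf{B}^n}(\mf{v}) \to J_\infty(\mf{v})$ because $\mf{V}^n\to\mf{V}^\infty$ and $\beta_{ii}^n\to\beta_{ii}^\infty$, $\beta_{ij}^n\to\beta_{ij}^\infty$ for $(i,j)\in\mathcal{K}_1$, all in $L^\infty$. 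Next, scale $\mf{v}$ group-wise onto $\mathcal{N}_{\mf{B}^n}$: for each $h$ there is a unique $t_h^n>0$ such that the rescaled $h$-block lies on the Nehari constraint for $J_{\mf{B}^n}$, and since $\mf{v}\in\mathcal{N}_\infty$ one checks $t_h^n\to 1$ (the defining equation for $t_h^n$ converges to that for $t_h=1$, using that on $\mf{v}$ the diagonal-block coupling terms converge and the off-diagonal ones vanish). Hence $\inf_{\mathcal{N}_{\mf{B}^n}} J_{\mf{B}^n} \le J_{\mf{B}^n}((t_h^n \mf{v}_h)_h) \to J_\infty(\mf{v})$, and since $J_{\mf{B}^n}(\mf{u}^n) = \inf_{\mathcal{N}_{\mf{B}^n}} J_{\mf{B}^n}$, taking the infimum over $\mf{v}$ gives $\limsup_n J_{\mf{B}^n}(\mf{u}^n) \le c_\infty$.

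\textbf{Lower bound and strong convergence.} Write $J_{\mf{B}^n}(\mf{u}^n) = \tfrac12\sum_h\|\mf{u}_h^n\|_{\mf{V}_h^n}^2 - \tfrac14\big[\text{diagonal-block coupling}\big] - \tfrac14\big[\text{off-diagonal coupling}\big]$. The off-diagonal coupling on $\mathcal{N}_{\mf{B}^n}$ splits into a nonpositive competition part and a nonnegative cooperation part; since $(i,j)\in\mathcal{K}_2$ means $\beta_{ij}^n\le 0$ under our standing assumption \eqref{ass segr}-setup, the entire off-diagonal coupling is $\le 0$, so dropping it only decreases $J_{\mf{B}^n}(\mf{u}^n)$. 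On each $h$-block, using the Nehari identity $\sum_{i\in I_h}\pa_i J_{\mf{B}^n}(\mf{u}^n)u_i^n = 0$ one rewrites the contribution of the $h$-block as $\tfrac14\|\mf{u}_h^n\|_{\mf{V}_h^n}^2$ up to the off-diagonal piece, which again is nonnegative to subtract; thus $J_{\mf{B}^n}(\mf{u}^n) \ge \tfrac14\sum_h\|\mf{u}_h^n\|_{\mf{V}_h^n}^2 - (\text{nonneg. off-diag. term going to zero along the argument})$. Combining with weak lower semicontinuity of the norms, the a.e.\ convergence $\mf{u}^n\to\mf{u}^\infty$, and the strong $L^4$ convergence which makes the diagonal-block coupling terms converge to those of $J_\infty(\mf{u}^\infty)$, one gets $\liminf_n J_{\mf{B}^n}(\mf{u}^n) \ge J_\infty(\mf{u}^\infty) + \tfrac14\liminf_n\sum_h(\|\mf{u}_h^n\|_{\mf{V}_h^n}^2 - \|\mf{u}_h^\infty\|_{\mf{V}_h^\infty}^2) + \tfrac14\liminf_n(\text{off-diag.})$. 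Since $\mf{u}^\infty$ is admissible, $J_\infty(\mf{u}^\infty)\ge c_\infty$, so the two $\liminf$ correction terms — each of them a $\liminf$ of a nonnegative quantity — must both be zero; this yields $J_\infty(\mf{u}^\infty) = c_\infty$, the strong convergence $\|\mf{u}_h^n\|_{\mf{V}_h^n}\to\|\mf{u}_h^\infty\|_{\mf{V}_h^\infty}$ (hence $\mf{u}^n\to\mf{u}^\infty$ strongly in $\mathbb{H}$, recalling $\mf{V}^n\to\mf{V}^\infty$ in $L^\infty$), and $\sum_{(i,j)\in\mathcal{K}_2}\int_\Omega(\beta_{ij}^n)^-(u_i^n u_j^n)^2 \to 0$. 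The estimates \eqref{stime al limite} pass to $\mf{u}^\infty$ by the strong convergence. The main obstacle is the bookkeeping in the lower bound: one must carefully isolate the competition term as the single nonnegative-to-subtract quantity whose vanishing is forced, while simultaneously ensuring the $\mf{u}^\infty$ produced is the right admissible competitor for $\mathcal{N}_\infty$ (in particular that $\|\mf{u}_h^\infty\|_{\mf{V}_h^\infty}\neq 0$, which follows from the lower bound $\sum_{i\in I_h}|u_i^n|_4^2\ge\underline\gamma^\infty$ in \eqref{stime al limite} together with $L^4$ convergence, and that the Nehari identity for $J_\infty$ holds for $\mf{u}^\infty$, which follows by passing to the limit in the Nehari identity for $J_{\mf{B}^n}$ using strong convergence and the vanishing of the competition term).
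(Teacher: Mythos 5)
Your overall architecture (uniform bounds, weak/strong/a.e. convergence, segregation of the limit, upper bound via rescaled segregated competitors, then a lower bound forcing strong convergence and the vanishing of the competition term) matches the paper's, and your upper bound is essentially the paper's Lemma \ref{rem: c_n e c_infty} done with arbitrary competitors instead of a minimizer of $c_\infty$. However, there is a genuine circularity at the decisive step. Your lower bound hinges on the inequality $J_\infty(\mf{u}^\infty)\ge c_\infty$, which requires $\mf{u}^\infty$ to be \emph{admissible} for \eqref{var char of limit}, i.e. $\mf{u}^\infty\in\mathcal{N}_\infty$. You justify $\mf{u}^\infty\in\mathcal{N}_\infty$ at the very end ``by passing to the limit in the Nehari identity for $J_{\mf{B}^n}$ using strong convergence and the vanishing of the competition term'' --- but those two facts are precisely the outputs of the lower-bound argument that presupposes admissibility. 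What weak convergence and the sign condition $\beta_{ij}^n\le 0$ on $\mathcal{K}_2$ give you for free is only the \emph{inequality} $\|\mf{u}_h^\infty\|_{\mf{V}_h^\infty}^2\le \mf{M}_\infty(\mf{u}^\infty)_{hh}$, not the Nehari equality; and if this inequality were strict for some $h$, one could not exclude $J_\infty(\mf{u}^\infty)<c_\infty$, which would break the sandwich.

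The paper closes exactly this gap (its Lemma preceding the conclusion of the proof) with a rescaling/sandwich argument that you are missing: set $t_h^\infty:=\|\mf{u}_h^\infty\|_{\mf{V}_h^\infty}^2/\mf{M}_\infty(\mf{u}^\infty)_{hh}$, which is well defined and positive thanks to the lower bound $\sum_{i\in I_h}|u_i^n|_4^2\ge\underline{\gamma}^\infty$, and satisfies $t_h^\infty\le 1$ by the free inequality above. The rescaled function $(\sqrt{t_1^\infty}\mf{u}_1^\infty,\dots,\sqrt{t_m^\infty}\mf{u}_m^\infty)$ \emph{is} admissible (it lies in $\mathcal{N}_\infty$ and is segregated), so $4c_\infty\le\sum_h\|\mf{u}_h^\infty\|_{\mf{V}_h^\infty}^2 t_h^\infty$. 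Combining with weak lower semicontinuity, $4J_{\mf{B}^n}(\mf{u}^n)=\sum_h\|\mf{u}_h^n\|_{\mf{V}_h^n}^2$ on the constraint, and the upper bound $\lim_n 4c_n\le 4c_\infty$, one gets
\[
\sum_{h=1}^m\|\mf{u}_h^\infty\|_{\mf{V}_h^\infty}^2\ \le\ \lim_{n\to\infty}4c_n\ \le\ 4c_\infty\ \le\ \sum_{h=1}^m\|\mf{u}_h^\infty\|_{\mf{V}_h^\infty}^2\,t_h^\infty\ \le\ \sum_{h=1}^m\|\mf{u}_h^\infty\|_{\mf{V}_h^\infty}^2,
\]
forcing $t_h^\infty=1$ for every $h$, hence $\mf{u}^\infty\in\mathcal{N}_\infty$. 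Only after this identification does your lower-bound bookkeeping (norm convergence, vanishing of the competition term, $J_\infty(\mf{u}^\infty)=c_\infty$) go through. With this lemma inserted before your lower bound, the rest of your proposal is sound.
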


\begin{remark}
%We point out that such a limit problem do not see the interaction between components of $\mf{u}^\infty$ belonging to different groups. 
1) Similar results have been proved for completely competitive systems in \cite{CoTeVe2002,CoTeVe2003} (which considered ``minimal" solutions of slightly different systems), and in \cite{NoTaTeVe, WeWeth} (which considered uniformly bounded family of solutions). As far as we know, the unique other contribution studying phase separation in a not completely competitive system is \cite{CafLin}, where, however, it is assumed that $\beta_{ij} \le 0$ for every $i \neq j$ (this means that if two components do not compete, they do not interact at all). \\
2) The theorem works for both nonnegative and sign-changing solutions.
\end{remark}
%In what follows we assume that

Theorem \ref{thm:main3} says that if the relations between different groups is completely competitive, and the competition becomes stronger and stronger (as expressed by the \eqref{ass segr}), then pairs of components belonging to different groups tend to segregate, and the segregation occurs in such a way that the variational characterization of $\mf{u}^n$ passes to the limit: indeed, $\mf{u}^n \to \mf{u}^\infty$ in $\mathbb{H}$, where $\mf{u}^\infty$ is a minimizer for problem \eqref{var char of limit}. We point out that in this limit problem components belonging to different groups ``do not see" each other. This is essential to relate Theorems \ref{thm:main1} and \ref{thm:main4}: indeed this fact suggests that, if the competition between different groups is sufficiently strong, then system \eqref{sysx} can be considered as a perturbation of $m$ uncoupled systems of nonlinear equations with pure cooperation (recall that $\beta_{ij} \ge 0$ for every $(i,j) \in \mathcal{K}_1$). Such an idea shall be rigorously developed in the proof of Theorem \ref{thm:main4}: therein, we analyse the constrained second differential of $J_{\mf{B}}$ on $\mathcal{N}_{\mf{B}}$ evaluated in a minimum point $\mf{u}^\mf{B}$. Provided the competition between different groups is sufficiently strong (so that, as observed above, the problem is substantially uncoupled), we can prove that if either \eqref{h3} or \eqref{ass strong coop 2} are satisfied, then any minimizer of $J_{\mf{B}}$ on $\mathcal{N}_{\mf{B}}$ cannot have some zero components, otherwise the constrained second differential is not positive definite as a quadratic operator on the tangent space of $\mathcal{N}_{\mf{B}}$ in $\mf{u}^{\mf{B}}$. A similar analysis has been carried on for the $2$ components system \eqref{systemdcomp} in \cite{AmColor}.

\medskip

In light of the great efforts which have been devoted to the research of sufficient conditions on $\lambda_1,\dots, \lambda_d$ and $(\beta_{ij})$ for the existence of a nontrivial solution to the autonomous system \eqref{systemdcomp} in $\R^N$, we conclude this preliminary section with the following achievements, which are straightforward consequences of Corollary \ref{corol:main2}, Theorems \ref{thm:main4} and \ref{thm:theorem 6}. We recall that a least energy radial positive solution is a radial positive solution having minimal energy among all the radial positive solutions.

\begin{theorem}\label{thm: RN}
Let $d \ge 2$. Let $\lambda_1,\dots,\lambda_d \in \R$ and $(\beta_{ij})_{ij} \in \R^{d^2}$ such that $\lambda_i,\beta_{ii}>0$ for every $i$, and $\beta_{ij}=\beta_{ji}$ for every $i \neq j$.\\
($i$) There exists $K>0$ such that if $\beta_{ij} \le K$ for every $i \neq j$, then \eqref{systemdcomp} has a positive bound state which is radially symmetric with respect to the origin.\\
($ii$) let $\mf{a}$ be a $m$-decomposition of $d$. Let $\lambda_h>0$ for every $h=1,\dots,m$ and $\tilde \beta_h > \beta_{ii}$ for every $i \in I_h$ and $h=1,\dots,m$. There exist $\delta>0$ sufficiently small and $b>0$ sufficiently large such that, if 
\begin{align*}
|\lambda_i -\tilde \lambda_h|<\delta \quad \text{for every $i \in I_h$, for $h=1,\dots,m$}, \\
|\beta_{ij}- \tilde \beta_h|<\delta \quad \text{for every $(i,j) \in I_h^2$, $h=1,\dots,m$},\\
\beta_{ij}<-b \quad \text{for every $(i,j) \in \mathcal{K}_2$},
\end{align*}
then \eqref{systemdcomp} has a least energy radial positive solution.\\
($iii$) let $\mf{a}$ be a $m$-decomposition of $d$. Let us suppose that $a_h-a_{h-1}\le 2$ for every $h=1,\dots,m$; suppose also that 
\[
\beta_{ij} > C_{ij} \beta_{ii} \quad \text{for every $i,j \in I_h^2$, $h=1,\dots,m$},
\]   
where $C_{ij}$ is the best constant such that 
\[
\int_{\R^N} |\nabla u|^2+ \lambda_i u^2 \le C_{ij} \int_{\R^N} |\nabla u|^2+ \lambda_j u^2 \quad \text{for every $u \in H^1(\R^N)$}. 
\]
There exists $b'>0$ sufficiently large such that, if $\beta_{ij}<-b'$ for every $(i,j) \in \mathcal{K}_2$, then \eqref{systemdcomp} has a least energy radial positive solution. 
\end{theorem}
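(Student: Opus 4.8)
The plan is to reduce Theorem \ref{thm: RN} to the three main results already proven for the general non-autonomous problem \eqref{sysx}, by specializing to the autonomous radial setting $\Omega = \R^N$ with $V_i \equiv \lambda_i$ constant and $\beta_{ij}$ constant. The key point, already emphasized in the excerpt, is the Palais principle of symmetric criticality (see \cite{Pa}): working in the space $H^1_r(\R^N)$ of radially symmetric $H^1$ functions, a critical point of $J_{\mf{B}}$ restricted to $(H^1_r(\R^N))^d$ is a genuine critical point of $J_{\mf{B}}$ on all of $(H^1(\R^N))^d$, hence a weak solution of \eqref{systemdcomp}. Moreover the embedding $H^1_r(\R^N) \hookrightarrow L^4(\R^N)$ is compact for $N = 2,3$, so all the compactness arguments underlying Theorem \ref{thm:main1}, Corollary \ref{corol:main2}, Theorems \ref{thm:main4} and \ref{thm:theorem 6} (direct minimization, weak limits, strong convergence) go through verbatim with $H^1_0(\Omega)$ replaced by $H^1_r(\R^N)$. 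The constant $S$ in \eqref{def of S} is then replaced by the corresponding Sobolev constant on $H^1_r(\R^N)$, and the norms $\|\cdot\|_i$ by $\left(\int_{\R^N} |\nabla u|^2 + \lambda_i u^2\right)^{1/2}$, which are equivalent to the standard $H^1$ norm since $\lambda_i > 0$.

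With this dictionary in place, part ($i$) is an immediate transcription of Corollary \ref{corol:main2}: take $\mf{a} = (0,1,\dots,d-1,d)$, so $\mathcal{K}_1 = \emptyset$ and every off-diagonal $\beta_{ij}$ lies in $\mathcal{K}_2$; since $\beta_{ij}$ is constant, $|\beta_{ij}^+|_\infty \le \max\{\beta_{ij},0\} \le \max\{K',0\}$ where $K'$ is the threshold $\bar K$ of Corollary \ref{corol:main2} computed for the constant data $\lambda_i, \beta_{ii}$; setting $K := \bar K$ (which depends only on $\lambda_i$ and $\beta_{ii}$), the hypothesis $\beta_{ij} \le K$ for every $i \neq j$ forces $|\beta_{ij}^+|_\infty < \bar K$ (after possibly shrinking $K$ slightly to make the inequality strict), and Corollary \ref{corol:main2} yields a radial positive solution which is a minimizer of $J_{\mf{B}}$ on $\mathcal{N}_{\mf{B}} \cap \mathcal{E}_{\mf{B}}$; since a minimizer on a Nehari-type set has finite energy, it is a bound state. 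Part ($ii$) is the analogous transcription of Theorem \ref{thm:main4}: set $\tilde V_h = \tilde \lambda_h$ and $\tilde \beta_h = \tilde \beta_h$ (constants), check that \eqref{h3} holds because $\tilde \beta_h > \beta_{ii} = |\beta_{ii}|_\infty$ for $i \in I_h$, and that the hypotheses $|\lambda_i - \tilde\lambda_h| < \delta$, $|\beta_{ij} - \tilde\beta_h| < \delta$, $\beta_{ij} < -b$ are precisely \eqref{h4}--\eqref{h5} for constant data with $V_i = \lambda_i \ge 0$; then \eqref{th variazionale} holds and, by Proposition \ref{prop: constraint E omitted} (whose hypothesis \eqref{relation pure competition} is implied by $\beta_{ij} < -b < 0$ on $\mathcal{K}_2$), the minimizer of Theorem \ref{thm:main1} has all positive components and is a least energy positive solution — which in the radial framework means least energy radial positive solution, since the minimization is carried out in $(H^1_r(\R^N))^d$.

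Part ($iii$) is the transcription of Theorem \ref{thm:theorem 6}: the hypothesis $a_h - a_{h-1} \le 2$ is kept, assumption \eqref{ass strong coop 2} becomes $\beta_{ij} > C_{ij}\beta_{ii}$ with $C_{ij}$ the best constant in $\int |\nabla u|^2 + \lambda_i u^2 \le C_{ij}\int|\nabla u|^2 + \lambda_j u^2$ over $H^1(\R^N)$ (equivalently over $H^1_r(\R^N)$, the best constant being the same since it equals $\max\{1,\lambda_i/\lambda_j\}$), and \eqref{h7} becomes $\beta_{ij} < -b'$ for $(i,j) \in \mathcal{K}_2$; Theorem \ref{thm:theorem 6} then gives \eqref{th variazionale}, hence again a least energy radial positive solution. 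The only genuinely non-mechanical point — and the step I would be most careful about — is verifying that the passage from $H^1_0(\Omega)$ to $H^1_r(\R^N)$ does not damage any ingredient: specifically that (a) the elliptic regularity and strong maximum principle used to turn minimizers into $C^{1,\alpha}$ positive solutions still apply on $\R^N$ (they do, locally, for subcritical power nonlinearities in dimension $N \le 3$), (b) the compact embedding $H^1_r \hookrightarrow L^4$ covers every use of compactness in the proofs of Theorems \ref{thm:main1}, \ref{thm:main4}, \ref{thm:theorem 6} (it does, since those proofs only extract strongly convergent subsequences in $L^4$), and (c) the symmetric criticality principle is applicable, which requires the $O(N)$-invariance of $J_{\mf{B}}$ — guaranteed here because all $\lambda_i$ and $\beta_{ij}$ are constants. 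Once these three checks are recorded, the three statements follow by direct quotation of the corresponding general results. There is nothing further to prove.
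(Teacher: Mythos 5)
Your proposal is correct and follows exactly the route the paper intends: Theorem \ref{thm: RN} is stated there as a direct consequence of Corollary \ref{corol:main2} and Theorems \ref{thm:main4} and \ref{thm:theorem 6}, transplanted to the autonomous radial setting via the Palais principle of symmetric criticality and the compact embedding $H^1_r(\R^N)\hookrightarrow L^4(\R^N)$ for $N=2,3$. Your explicit checks (regularity, compactness, $O(N)$-invariance, and the strict-versus-nonstrict inequality in part ($i$)) are precisely the points the paper leaves implicit.
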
  

\begin{corollary}
Let $\mf{a}$ be a $m$-decomposition of $d$. Let $\lambda_h>0$ for every $h=1,\dots,m$ and $\tilde \beta_h > \beta_{ii}$ for every $i \in I_h$ and $h=1,\dots,m$. There exists $b'>0$ sufficiently large such that, if $\beta_{ij}<-b'$ for every $(i,j) \in \mathcal{K}_2$, then \eqref{systemdcomp} has a least energy radial positive solution.
\end{corollary}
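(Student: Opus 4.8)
The plan is to obtain this corollary as the ``$\delta=0$'' instance of Theorem~\ref{thm: RN}($ii$), i.e.\ as a direct application of Theorem~\ref{thm:main4} (transported to $\R^N$) in which the within-group data are chosen to coincide \emph{exactly} with the reference values $\tilde V_h,\tilde\beta_h$, so that the closeness hypothesis \eqref{h4} becomes vacuous and only the strength of the between-group competition has to be tuned.

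Concretely, I would work in $\Omega=\R^N$ with $N=2,3$ but inside the space $H^1_r(\R^N)$ of radial functions, which is legitimate by the Palais principle of symmetric criticality together with the compactness of $H^1_r(\R^N)\hookrightarrow L^4(\R^N)$, exactly as recalled before Theorem~\ref{thm: RN}; this is also why the conclusion will be a \emph{least energy radial} positive solution rather than a globally least energy one. Given the $m$-decomposition $\mf{a}$ and the prescribed numbers $\lambda_h>0$ and $\tilde\beta_h>\max_{i\in I_h}\beta_{ii}$, set $V_i\equiv\lambda_h$ for $i\in I_h$, $\beta_{ij}\equiv\tilde\beta_h$ for $(i,j)\in I_h^2$ with $i\neq j$, and $\tilde V_h:=\lambda_h$ --- these are the within-group data implicitly fixed by the statement. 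With this choice \eqref{h0} holds (the potentials are constant and $\ge\lambda_i=\lambda_h>0$, $\mf{B}$ is symmetric, $\beta_{ii}>0$ so one may take $\mu_i=\beta_{ii}$), \eqref{h1} holds (every off-diagonal within-group coupling equals $\tilde\beta_h>0$), and \eqref{h3} holds since $\tilde V_h=\lambda_h\ge 0$ and $\tilde\beta_h>\max_{i\in I_h}\beta_{ii}=\max_{i\in I_h}|\beta_{ii}|_\infty$.

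Let now $\delta,b>0$ be the constants produced by Theorem~\ref{thm:main4} for the data $\beta_{ii},\tilde V_h,\tilde\beta_h$; by that statement they depend on nothing else. Hypothesis \eqref{h4} is then satisfied for this $\delta$ in a trivial way, since $V_i,\beta_{ij}\ge 0$ and $|V_i-\tilde V_h|_\infty=|\beta_{ij}-\tilde\beta_h|_\infty=0<\delta$ within each group. Setting $b':=b$, the assumption $\beta_{ij}<-b'$ for $(i,j)\in\mathcal{K}_2$ gives exactly \eqref{h5}. Therefore Theorem~\ref{thm:main4} yields \eqref{th variazionale}. Moreover, since $\beta_{ij}<-b'<0$ on every $(i,j)\in\mathcal{K}_2$, both the smallness assumption \eqref{small_coop_ass} (trivially, as $\beta_{ij}^+\equiv 0$) and the pure-competition assumption \eqref{relation pure competition} hold, so Theorem~\ref{thm:main1} and Proposition~\ref{prop: constraint E omitted} apply: $J_{\mf{B}}$ attains its infimum over $\mathcal{N}_{\mf{B}}$ (equivalently over $\mathcal{N}_{\mf{B}}\cap\mathcal{E}_{\mf{B}}$) at some $\mf{u}^{\mf{B}}$, which by Theorem~\ref{thm:main1} solves \eqref{sysx}. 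By \eqref{th variazionale} this minimizer has no identically-zero component; elliptic regularity together with the strong maximum principle then force each component to be strictly positive, and the variational characterization (via Proposition~\ref{prop: constraint E omitted}) identifies $\mf{u}^{\mf{B}}$ as a least energy positive solution --- here a least energy radial positive solution of \eqref{systemdcomp}, the whole argument having been carried out in $H^1_r(\R^N)$.

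I do not expect a genuine obstacle: the corollary is a specialization. The only two points needing a word of care are (i) the reduction to radial functions, which is precisely what weakens the conclusion to the ``radial'' statement; and (ii) verifying that the constant $b=b'$ coming from Theorem~\ref{thm:main4} depends \emph{only} on the within-group data $\beta_{ii},\lambda_h,\tilde\beta_h$, so that it may be fixed once and for all before the between-group couplings $\beta_{ij}$, $(i,j)\in\mathcal{K}_2$, are chosen sufficiently negative.
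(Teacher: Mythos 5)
Your proposal is correct and follows exactly the route the paper intends: the corollary is stated as a straightforward consequence of Theorem~\ref{thm:main4} (equivalently Theorem~\ref{thm: RN}($ii$)), obtained by taking the within-group data equal to the reference values so that \eqref{h4} holds trivially, and your verification of \eqref{h0}, \eqref{h1}, \eqref{h3}, \eqref{h5}, of the dependence of $b=b'$ only on the within-group data, and of the reduction to $H^1_r(\R^N)$ matches the paper's argument. No gaps.
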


\paragraph{Structure of the paper.}
The proof of Propositions \ref{prop: natural constraint}, \ref{prop: constraint E omitted} and of Theorem \ref{thm:main1} is the object of the first section. The analysis of the segregation phenomena, Theorem \ref{thm:main3}, is faced in Section \ref{sec: segregation}, and the proof of Theorems \ref{thm:main4} and \ref{thm:theorem 6} is given is Section \ref{sec: strong}. Finally, Section \ref{sec: further} contains further results and comments.

\begin{remark}
After this work has been submitted, the author learned that the existence of positive solutions for the autonomous system \eqref{systemdcomp} with mixed couplings has been studied (independently) also in \cite{SaWa}. We think that is interesting to compare the results of our contribution and those of \cite{SaWa} when considering \eqref{systemdcomp} with $d=3$. At first, we point out that thanks to our Corollary \ref{corol:main2} the existence of a positive solution (not necessarily of least energy) is proved under the assumption $-\infty<\beta_{ij} \le K$ for every $i \neq j$, with $ K>0$ small enough (no conditions on the signs of $\beta_{ij}$ are necessary). Secondly, we assume that $\beta_{12}>0$ and $\beta_{13},\beta_{23} \le 0$. By Theorem 0.1 in \cite{SaWa}, there exists $\beta_* \gg 1$ such that if $\beta > \beta_*$, then \eqref{systemdcomp} has a least energy positive solution. By Theorem \ref{thm:main4} here, the same conclusion holds provided $\beta_{12} \ge C \max\{\beta_{11},\beta_{22}\}$ and $\beta_{13},\beta_{23} < -b'$, with $b' \gg 1$. Note that, since $\beta_*$ and $b'$ has to be though as arbitrarily large quantities, the two results are complementary and none of them can be seen as a particular case of the other. 
%%It is also interesting to observe that both the proofs of Theorem 0.1 in \cite{SaWa} and of Theorem \ref{thm:main4} here follow the same sketch:\\
%%Step 1) proof of the existence of a minimizer in the Nehari type manifold corresponding to the $2$-decomposition $(0,2,3)$; \\
%%Step 2) research of sufficient conditions in order to show that such minimizer is nontrivial. 
\end{remark}

\section{Semi-trivial solutions for systems with small cooperation between different ``groups"}\label{sec: semi-trivial}

This section is devoted to the proof of Proposition \ref{prop: natural constraint} and \ref{prop: constraint E omitted}, and of Theorem \ref{thm:main1}. For the reader convenience, we recall the assumptions we are considering: let $\Omega$ be a regular bounded domain of $\R^N$ with $N \le 3$, let $d \ge 2$, and let $\mf{a}$ be a $m$-decomposition of $d$ for some $m \le d$. We choose $\mf{V}$ and $\mf{B}$ as in \eqref{h0} and \eqref{h1}. Since in what follows the choice of $\mf{V}$ is fixed, we use the simplified notation $J_{\mf{B}}, \mathcal{N}_{\mf{B}},\dots$.

We search for a semi-trivial solution of \eqref{sysx} as a minimizer of $J_{\mf{B}}$ restricted on the intersection $\mathcal{N}_{\mf{B}} \cap \mathcal{E}_{\mf{B}}$. Thus, it is relevant to understand the geometry of the constraint. To do this, we explicitly write down the equations which define $\mathcal{N}_{\mf{B}}$ and $\mathcal{E}_{\mf{B}}$: $\mf{u} \in \mathcal{N}_{\mf{B}}$ if $\|\mf{u}_h\|_h^2>0$ and
\begin{equation}\label{eq nehari squadra}
\sum_{i \in I_h} \| u_i\|_i^2 = \sum_{k=1}^m \ \sum_{(i,j) \in I_h \times I_k} \int_{\Omega} \beta_{ij}(x) u_i^2 u_j^2 \quad \Longleftrightarrow \quad \|\mf{u}_h\|_h^2 = \sum_{k=1}^m \mf{M}(\mf{B},\mf{u})_{hk}
\end{equation}
for every $h=1,\dots,m$. Also, $\mf{u} \in \mathcal{E}_{\mf{B}}$ if $\mf{M}(\mf{B},\mf{u})_{hh} > \sum_{k \neq h} |\mf{M}(\mf{B},\mf{u})_{hk}|$, that is, 
\begin{equation}\label{eq open cone}
\sum_{(i,j) \in I_h^2} \int_{\Omega} \beta_{ij}(x) u_i^2 u_j^2 
> \sum_{k \neq h} \left| \sum_{(i,j) \in I_h \times I_k} \int_{\Omega} \beta_{ij}(x) u_i^2 u_j^2  \right| 
\end{equation}
for every $h=1,\dots,m$. We recall that $\mf{M}(\mf{B},\mf{u})$ and $\mf{u}_h$ have been defined by \eqref{def matrice M} and \eqref{def uh}, respectively.
 %Note that these relations can be written in a more compact form, recalling the definition of $\mf{M}(\mf{B},\mf{u})$; we often use this fact in this section.

\begin{remark}\label{rem: geom constraint}
1) The set $\mathcal{N}_{\mf{B}}$ is defined by a system of inequalities plus a system of equations $G_{\mf{B},h}(\mf{u}) = 0$, where
\begin{equation}\label{eq per Nehari nome}
%G_{\mf{B},h}(\mf{u}):=\sum_{i \in I_h} \| u_i\|_i^2 - \sum_{k=1}^m \mf{M}(\mf{B},\mf{u})_{hk}
G_{\mf{B},h}(\mf{u}):= \| \mf{u}_h\|_h^2 - \sum_{k=1}^m \mf{M}(\mf{B},\mf{u})_{hk}.
\end{equation}
For every $\varphi \in \mathbb{H}$ it results
\begin{equation}\label{diff constraint}
\langle \nabla G_{\mf{B},h}(\mf{u}), \mf{\varphi} \rangle = 2 \langle \mf{u}_h, \mf{\varphi}_h \rangle_h - 2\sum_{k=1}^m \ \sum_{(i,j) \in I_h \times I_k} \int_{\Omega} \beta_{ij}(x) u_i u_j (u_i \varphi_j + \varphi_i u_j).
\end{equation}
If $\mf{u} \in \mathcal{N}_{\mf{B}}$, then
$\langle \nabla G_{\mf{B},h}(\mf{u}), \mf{u} \rangle = - 2 \| \mf{u}_h\|_h^2 <0$; it follows that any $G_{\mf{B},h}$ defines, locally, a smooth manifold of codimension $1$ in $\mathbb{H}$. Let now $\mf{u} \in \mathcal{E}_{\mf{B}} \cap \mathcal{N}_{\mf{B}}$. We claim that in a neighbourhood of $\mf{u}$ the set $\mathcal{N}_{\mf{B}}$ defines a smooth manifold of codimension $m$ in $\mathbb{H}$. To verify this, we have to show that the differential $(d G_{\mf{B},1}(\mf{u}),\dots,d G_{\mf{B},m}(\mf{u}))$ is surjective as linear operator $\mathbb{H} \to \R^m$. By \eqref{eq per Nehari nome} and \eqref{diff constraint}, we have for $(t_1,\dots,t_m) \in \R^m$ and for every $h=1,\dots,m$
\[
\langle \nabla G_{\mf{B},h}(\mf{u}), (t_1 \mf{u}_1,\dots,t_m \mf{u}_m) \rangle = -2 \sum_{k=1}^m \mf{M}(\mf{B},\mf{u})_{hk} t_k.
\]
%that is, 
%\[
%(d G_{\mf{B},1}(\mf{u}),\dots,d G_{\mf{B},m}(\mf{u}))[(t_1 \mf{u}_1,\dots t_m \mf{u}_m)] = -\mf{M}(\mf{B},\mf{u})[\mf{t}].
%\]
Since $\mf{u} \in \mathcal{E}_{\mf{B}}$, the matrix $\mf{M}(\mf{B},\mf{u})$ is non-singular and the claim is proved.

2) Due to the compact embedding $H_0^1(\Omega) \hookrightarrow  L^4(\Omega)$ and equation \eqref{eq open cone}, it is not difficult to check that $\mathcal{E}_{\mf{B}}$ is open in $\mathbb{H}$. 

3) The intersection $\mathcal{N}_{\mf{B}} \cap \mathcal{E}_{\mf{B}}$ is not empty. Actually, it is possible to prove that, given $\bar V_i$ and $\bar \beta_{ii}$ as in assumption \eqref{h0}, it results
\[
\bigcap \left\{ \mathcal{N}_{\mf{V},\mf{B}} \cap \mathcal{E}_{\mf{B}}\left| \begin{array}{l}
\text{$\mf{V}$ and $\mf{B}$ are such that $V_i = \bar V_i$} \\
\text{and } \beta_{ii}= \bar \beta_{ii} \text{ for every $i=1,\dots,d$}
\end{array} \right. \right\}\neq \emptyset.
\]
To show this, for any $\mf{u} \in \mathbb{H}$ we introduce a function $\Psi_{\mf{B}, \mf{u}}: \overline{(\R_+)^m} \to \R$ defined by
\begin{equation}\label{exp Psi}
\begin{split}
\Psi_{\mf{B}, \mf{u}}(\mf{t}) &:= J_{\mf{B}} \left( \sqrt{t_1} \mf{u}_1, \dots, \sqrt{t_h} \mf{u}_h, \dots, \sqrt{t_m} \mf{u}_m \right) \\
&  = \frac{1}{2} \sum_{h=1}^m  \|\mf{u}_h\|_h^2 t_h - \frac{1}{4} \mf{M}(\mf{B},\mf{u}) \mf{t} \cdot \mf{t}.
\end{split}
\end{equation}
Note that if $\mf{t} \in (\R_+)^m$ is a critical point of $\Psi_{\mf{B}, \mf{u}}$, then 
$( \sqrt{t_1}\mf{u}_{1},\dots, \sqrt{t_m} \mf{u}_m) \in \mathcal{N}_{\mf{B}}$. Now, let $\tilde{\mf{u}}$ be such that $\tilde u_i \not \equiv 0$ for every $i$, and $\tilde u_i \tilde u_j \equiv 0$ for every $i \neq j$. The matrix $\mf{M}(\mf{B},\tilde{\mf{u}})$ is then a diagonal matrix with strictly positive diagonal entries, which does not depend on the particular choice of $\mf{B}$ but only on $\bar \beta_{ii}$; hence
\[
\tilde{\mf{u}} \in \bigcap \left\{ \mathcal{E}_{\mf{B}} \left| \begin{array}{l}
\text{$\mf{V}$ and $\mf{B}$ are such that $V_i = \bar V_i$} \\
\text{and } \beta_{ii}= \bar \beta_{ii} \text{ for every $i=1,\dots,d$}
\end{array} \right. \right\}.
\]
Furthermore, one can easily check that $\tilde{\mf{t}}$, defined by $\tilde t_h =  \| \tilde{\mf{u}}_h\|_h^2  / \mf{M}(\mf{B}, \tilde{\mf{u}})_{hh} >0$ for every $h=1, \dots, m$ is a critical point of $\Psi_{\mf{B},\tilde{\mf{u}}}$ in $(\R_+)^m$ for every $\mf{V}$ and $\mf{B}$ such that $V_i= \bar V_i$ and $\beta_{ii}= \bar \beta_{ii}$; thus
\[
(\sqrt{ \tilde t_1}\tilde{\mf{u}}_1,\dots, \sqrt{\tilde t_m} 	\tilde{\mf{u}}_m) \in \bigcap \left\{ \mathcal{N}_{\mf{V},\mf{B}} \cap \mathcal{E}_{\mf{B}}\left| \begin{array}{l}
\text{$\mf{V}$ and $\mf{B}$ are such that $V_i = \bar V_i$} \\
\text{and } \beta_{ii}= \bar \beta_{ii} \text{ for every $i=1,\dots,d$}
\end{array} \right. \right\}.
\]
%As a further observation, we point out that for a chosen $\tilde{\mf{u}}$ satisfying the above assumptions $t_h$ is a continuous function of $V_i$ with respect to the $L^\infty(\Omega)$ topology. In particular, it will be useful to point out that if $V_1^n,\dots,V_m^n$ is a sequence of nonnegative bounded potentials such that
%\[
%\sup_n \max_i |V_i^n|_\infty < +\infty,
%\]
%then there exists $C$ independent on $n$ such that
%\begin{equation}\label{obs su th da V}
%\tilde t_h(V_i^n,\dots,V_m^n):= \frac{ \sum_{i \in I_h} \int_{\Omega} |\nabla \tilde u_i|^2 + V_i^n(x) \tilde{u}_i^2 }{ \mf{M}(\mf{B}, \tilde{\mf{u}})_{hh} } \le C \qquad \forall h=1,\dots,m.
%\end{equation} 

4) If $\mf{u} \in \mathcal{N}_{\mf{B}} \cap \mathcal{E}_{\mf{B}}$, then $\Psi_{\mf{B}, \mf{u}}$ has the unique maximum point $\mf{1}$ in $\overline{(\R_+)^m}$. Indeed, 
by $\mf{u} \in \mathcal{N}_{\mf{B}}$ we immediately see that the point $\mf{1}$ is a critical point of $\Psi_{\mf{B}, \mf{u}}$. As function of $\mf{t}$, this is a polynomial of degree $2$, and the related quadratic form is negative definite since $\mf{u} \in \mathcal{E}_{\mf{B}}$. This implies that $\Psi_{\mf{B}, \mf{u}}$ has at most one inner critical point, which has to be a strict maximum.

5) If $\mf{u} \in \mathcal{N}_{\mf{B}}$, then 
\begin{equation}\label{func on Nehari}
J_{\mf{B}}(\mf{u}) = \frac{1}{4} \mf{M}(\mf{B},\mf{u}) \mf{1} \cdot\mf{1} = \frac{1}{4} \sum_{h =1}^m \|\mf{u}_h\|_h^2 >0.
\end{equation}
In particular, the functional $J_{\mf{B}}$ is bounded below on $\mathcal{N}_{\mf{B}}$.
\end{remark}

Having understood the properties of the constraint, Proposition \ref{prop: natural constraint} follows easily. 

\begin{proof}[Proof of Proposition \ref{prop: natural constraint}] 
As $\mf{u}^{\mf{B}} \in \mathcal{N}_{\mf{B}}$, it results $\|\mf{u}_h^{\mf{B}}\|_h^2 >0$, so that at least $m$ components of $\mf{u}^{\mf{B}}$ are nontrivial. The constraint $\mathcal{N}_{\mf{B}} \cap \mathcal{E}_{\mf{B}}$ is an open subset of $\mathcal{N}_{\mf{B}}$ in the topology of $\mathbb{H}$. Thus the function $\mf{u}^{\mf{B}}$ is an inner critical point of $J_{\mf{B}}$ in an open subset of $\mathcal{N}_{\mf{B}}$, and in particular it is a constrained critical point of $J_{\mf{B}}$ on $\mathcal{N}_{\mf{B}}$. We showed that in a neighbourhood of $\mf{u}^{\mf{B}} \in \mathcal{E}_{\mf{B}}$ this is a smooth manifold of codimension $m$, so that by the Lagrange multipliers rule there exist $\mu_1,\dots,\mu_m \in \R$ such that
\begin{equation}\label{constrained critical point}
\nabla J_{\mf{B}} (\mf{u}^{\mf{B}}) - \sum_{h=1}^m \mu_h \nabla G_{\mf{B},h}(\mf{u}^{\mf{B}}) = 0.
\end{equation}
Let us test the \eqref{constrained critical point} with $(0,\dots,0, \mf{u}_h^{\mf{B}},0 \dots, 0)$, for $h=1,\dots,m$: 
%we infer
%\[
%G_{\mf{B},h}(\mf{u}^{\mf{B}})- \sum_{k=1}^m \mu_k \langle \nabla G_{\mf{B},k}(\mf{u}^{\mf{B}}), (0,\dots,0, \mf{u}_h^{\mf{B}},0 \dots, 0)\rangle = 0
%\]
%for every $h=1,\dots,m$; 
recalling expression \eqref{diff constraint}, and that $G_{\mf{B},h}(\mf{u}^{\mf{B}})=0$ for every $h$, we deduce
\begin{align*}
0 & = 2 \left( \|\mf{u}_h\|_h^2 - 2 \mf{M}(\mf{B},\mf{u}^{\mf{B}})_{hh}- \sum_{k \neq h} \mf{M}(\mf{B},\mf{u}^{\mf{B}})_{hk} \right) \mu_h - 2 \sum_{k \neq h} \mf{M}(\mf{B},\mf{u}^{\mf{B}})_{hk} \mu_k \\
& = -2 \sum_{k=1}^m \mf{M}(\mf{B},\mf{u}^{\mf{B}})_{hk} \mu_k,
\end{align*}
for every $h=1,\dots,m$. This means that $\mu_1,\dots,\mu_m$ are defined by the linear homogeneous system $\mf{M}(\mf{B},\mf{u}^{\mf{B}}) \mf{\mu} = \mf{0}$, which, in light of the fact that $\mf{u}^{\mf{B}} \in \mathcal{E}_{\mf{B}}$, has the unique solution $\mf{\mu}=\mf{0}$. Plugging into the \eqref{constrained critical point}, we see that $\nabla J_{\mf{B}}(\mf{u}^{\mf{B}}) = 0$, that is, $\mf{u}^{\mf{B}}$ is a free critical point of $J_{\mf{B}}$ in $\mathbb{H}$. 
\end{proof}

Let $\mf{V}$ and $\mf{B}$ satisfying the \eqref{h0} and \eqref{h1}. In what follows we aim at proving that, provided $\max_{(i,j) \in \mathcal{K}_2} |\beta_{ij}^+|_\infty$ is sufficiently small, the variational problem
\[
c_{\mf{B}} := \inf_{\mf{u} \in \mathcal{N}_{\mf{B}} \cap \mathcal{E}_{\mf{B}}} J_{\mf{B}}(\mf{u})
\]
admits a minimizer. Note that, in light of points 3) and 5) of Remark \ref{rem: geom constraint}, $J_{\mf{B}}$ is bounded from below on the constraint $\mathcal{N}_{\mf{B}} \cap \mathcal{E}_{\mf{B}}$, which is non-empty. Let $(\mf{u}^{\mf{B}}_n)\subset \mathcal{N}_{\mf{B}} \cap \mathcal{E}_{\mf{B}}$ be a minimizing sequence for $c_{\mf{B}}$. In light of the \eqref{eq nehari squadra}, \eqref{eq open cone}, and of the definition of $J_{\mf{B}}$, it is not restrictive to assume that $u_{i,n}^{\mf{B}} \ge 0$ a.e. in $\Omega$, for every $i=1,\dots,d$ and $n \in \N$. In the next three lemmas we prove some useful properties of $(\mf{u}^{\mf{B}}_n)$. Inside the proofs, where the choice of $\mf{B}$ is fixed, we simply write $(\mf{u}_n)$ to ease the notation.

\begin{lemma}\label{lem: upper bounds}
Let $(\mf{u}_n^{\mf{B}}) \subset \mathcal{N}_{\mf{B}} \cap \mathcal{E}_{\mf{B}}$ be a minimizing sequence for $c_{\mf{B}}$. Then there exists a universal constant $\bar C>0$ such that
%There exists $\overline{\gamma}>0$, depending only on $V_i$ and $\beta_{ii}$  ($i=1,\dots,d$), such that
\[
\sum_{i=1}^d \| u_{i,n}^{\mf{B}}\|_i^2  \le  \bar C\left( \frac{\left( 1+ \max_{i } |V_i|_\infty\right)^2}{\min_{i } \mu_i}+1\right) =: \overline{\gamma} \quad \text{and} \quad c_{\mf{B}} \le \overline{\gamma},
\]
provided $n$ is sufficiently large.
\end{lemma}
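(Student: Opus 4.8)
The plan is to bound $c_{\mf B}$ from above by evaluating $J_{\mf B}$ on an explicit competitor built from functions with pairwise disjoint supports, and then to transfer this bound to the minimizing sequence through the identity $J_{\mf B}(\mf u)=\tfrac14\sum_{h=1}^m\|\mf u_h\|_h^2$ valid on $\mathcal N_{\mf B}$ (see \eqref{func on Nehari}). Concretely, since $\Omega$ is open and $d\ge 2$, I would fix once and for all $d$ pairwise disjoint balls $B_1,\dots,B_d$ of a common radius $r$, compactly contained in $\Omega$, together with a fixed $\varphi\in C_c^\infty(B(0,1))\setminus\{0\}$; setting $w_i(x):=\varphi((x-x_i)/r)$ (extended by zero outside $B_i=B(x_i,r)$), the functions $w_i\in H_0^1(\Omega)$ have pairwise disjoint supports, and $A:=\int_\Omega|\nabla w_i|^2$, $E:=|w_i|_2^2$, $D:=|w_i|_4^4$ are positive constants independent of $i$ and of $\mf V,\mf B$, depending only on $\Omega$, $N$, $d$.

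Because the supports are disjoint, $\mf M(\mf B,\mf w)$ is diagonal, with $\mf M(\mf B,\mf w)_{hh}=\sum_{i\in I_h}\int_\Omega\beta_{ii}w_i^4\ge D\min_k\mu_k>0$; hence $\mf w\in\mathcal E_{\mf B}$, and, following Remark \ref{rem: geom constraint}(3), with $t_h:=\|\mf w_h\|_h^2/\mf M(\mf B,\mf w)_{hh}$ the vector $\mf v:=(\sqrt{t_1}\mf w_1,\dots,\sqrt{t_m}\mf w_m)$ lies in $\mathcal N_{\mf B}\cap\mathcal E_{\mf B}$. Using \eqref{func on Nehari},
\[
c_{\mf B}\le J_{\mf B}(\mf v)=\frac14\sum_{h=1}^m t_h\|\mf w_h\|_h^2=\frac14\sum_{h=1}^m\frac{\|\mf w_h\|_h^4}{\mf M(\mf B,\mf w)_{hh}} .
\]
Since $\|w_i\|_i^2=A+\int_\Omega V_iw_i^2\le(A+E)(1+\max_k|V_k|_\infty)$, one gets $\|\mf w_h\|_h^2\le (A+E)\,d\,(1+\max_k|V_k|_\infty)$, and combining with $\mf M(\mf B,\mf w)_{hh}\ge D\min_k\mu_k$ yields
\[
c_{\mf B}\le\frac{m(A+E)^2d^2}{4D}\cdot\frac{\bigl(1+\max_k|V_k|_\infty\bigr)^2}{\min_k\mu_k},
\]
which is of the asserted form; the summand $+1$ in the definition of $\overline\gamma$ only makes the estimate more generous.

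Finally, since $J_{\mf B}(\mf u_n^{\mf B})\to c_{\mf B}$, for $n$ large $J_{\mf B}(\mf u_n^{\mf B})\le c_{\mf B}+1$, and by \eqref{func on Nehari} again $\sum_{i=1}^d\|u_{i,n}^{\mf B}\|_i^2=4J_{\mf B}(\mf u_n^{\mf B})\le 4c_{\mf B}+4$; choosing $\bar C$ large enough (depending only on $\Omega$, $N$, $d$, $m$ through $A,E,D$) absorbs all constants and gives both $c_{\mf B}\le\overline\gamma$ and $\sum_i\|u_{i,n}^{\mf B}\|_i^2\le\overline\gamma$ for $n$ large. I do not expect a genuine obstacle here: the only points requiring care are choosing the competitor with disjoint supports so that $\mf M(\mf B,\cdot)$ becomes diagonal and the competitor automatically lands in $\mathcal E_{\mf B}$, and keeping the dependence of the constants honest, so that only $\max_k|V_k|_\infty$ and $\min_k\mu_k$ appear while everything else is absorbed into the ``universal'' constant $\bar C$. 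The identity \eqref{func on Nehari} is what makes the argument short, turning the energy bound directly into a norm bound.
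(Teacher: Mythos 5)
Your proposal is correct and follows essentially the same route as the paper: both build a competitor from functions with pairwise disjoint supports (so that $\mf{M}(\mf{B},\cdot)$ is diagonal and the scaled competitor lands in $\mathcal{N}_{\mf{B}}\cap\mathcal{E}_{\mf{B}}$, exactly as in point 3) of Remark \ref{rem: geom constraint}), estimate $c_{\mf{B}}$ via \eqref{func on Nehari}, and then transfer the bound to the minimizing sequence through $J_{\mf{B}}(\mf{u}_n^{\mf{B}})\le c_{\mf{B}}+1$. The only difference is cosmetic: you make the test functions explicit as rescaled bumps on disjoint balls, whereas the paper keeps them abstract.
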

\begin{proof}
We take $\left( \sqrt{\tilde t_1}\tilde{\mf{u}}_1,\dots, \sqrt{\tilde t_m} \tilde{\mf{u}}_m\right) \in  \mathcal{N}_{\mf{B}} \cap \mathcal{E}_{\mf{B}}$  for every $\mf{B}$ sharing the same diagonal elements $\beta_{ii}$, and for a fixed choice of $\mf{V}$, as in point 3) of Remark \ref{rem: geom constraint}. Clearly, recalling the definition of $\tilde{t}_h$ and the expression of $J_{\mf{B}}$ on $\mathcal{N}_{\mf{B}}$, equation \eqref{func on Nehari}, it results
\begin{align*}
c_{\mf{B}} &\le J_{\mf{B}}\left( \sqrt{\tilde t_1}\tilde{\mf{u}}_1,\dots, \sqrt{\tilde t_m} \tilde{\mf{u}}_m\right)  = \frac{1}{4} \sum_{h=1}^m \|\tilde{\mf{u}}_h\|_h^2 \tilde t_h \\
&\le \frac{1}{4} \sum_{h=1}^m \frac{\left( 1+ \max_{i} |V_i|_\infty\right)^2}{\min_i \mu_i} \cdot \frac{ \left(\sum_{i \in I_h} \int_{\Omega} |\nabla \tilde{u}_i|^2+\tilde{u}_i^2\right)^2} {\sum_{i \in I_h}\int_{\Omega} \tilde u_i^4} \le \tilde C \frac{\left( 1+ \max_{i} |V_i|_\infty\right)^2}{\min_{i} \mu_i};
\end{align*}
here $\tilde C$ is a positive constant depending only by the choice of $\tilde{\mf{u}}$ as in Remark \ref{rem: geom constraint}, and is independent on $\mf{V}$ and $\mf{B}$. Now, if $(\mf{u}_n)$ is a minimizing sequence for $c_{\mf{B}}$, it is not restrictive to assume that $J_{\mf{B}}(\mf{u}_n) \le c_{\mf{B}}+1$ for every $n$, and the desired result follows.
\end{proof}

%The previous result enables us to enforce the lower bound given in Lemma \ref{lem: lower bound}.

\begin{lemma}\label{lem: lower bound forte}
Let $(\mf{u}_n^{\mf{B}}) \subset \mathcal{N}_{\mf{B}} \cap \mathcal{E}_{\mf{B}}$ be a minimizing sequence for $c_{\mf{B}}$. Assume that there exists $0<K_1 < S^2/\overline{\gamma}$ such that $\max_{(i,j) \in \mathcal{K}_2}  |\beta_{ij}^+|_\infty  \le K_1$, where $S$ and $\overline{\gamma}$ have been defined in \eqref{def of S} and Lemma \ref{lem: upper bounds}, respectively; then 
%there exists $\underline{\gamma}>0$ depending only on $V_i$ ($i=1,\dots,m$), $\mathcal{B}$ and $K_1$, such that 
\begin{align*}
\sum_{i \in I_h} |u_{i,n}^{\mf{B}}|_4^2 & 
%\ge \underline{\gamma} 
\ge \frac{1}{\displaystyle d \left( \max_{(i,j)\in \mathcal{K}_1} |\beta_{ij}|_{\infty} + \max_{i} |\beta_{ii}|_{\infty}\right)}\left( S- \frac{\overline{\gamma}}{S} \max_{(i,j)\in \mathcal{K}_2}|\beta_{ij}^+|_{\infty}\right)=: \underline{\gamma}>0,
\end{align*}
for every $h=1,\dots,m$.
\end{lemma}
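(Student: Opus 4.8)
The plan is to work entirely on the manifold $\mathcal{N}_{\mf{B}}$, where by \eqref{func on Nehari} the value of $J_{\mf{B}}$ is $\frac14\sum_h\|\mf{u}_h\|_h^2$, and to exploit the Nehari constraint \eqref{eq nehari squadra} together with the smallness of the cooperative couplings between groups. Fix $h\in\{1,\dots,m\}$. First I would write out the $h$-th Nehari equation for $\mf{u}_n=\mf{u}_n^{\mf{B}}$, namely
\[
\sum_{i\in I_h}\|u_{i,n}\|_i^2=\sum_{(i,j)\in I_h^2}\int_\Omega\beta_{ij}u_{i,n}^2u_{j,n}^2+\sum_{k\neq h}\sum_{(i,j)\in I_h\times I_k}\int_\Omega\beta_{ij}u_{i,n}^2u_{j,n}^2 .
\]
The left-hand side is bounded below by $S\sum_{i\in I_h}|u_{i,n}|_4^2$ (using $S|u|_4^2\le\|u\|_i^2$). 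On the right-hand side, the intra-group ($\mathcal{K}_1$ plus diagonal) terms are estimated from above by $\bigl(\max_{\mathcal{K}_1}|\beta_{ij}|_\infty+\max_i|\beta_{ii}|_\infty\bigr)\bigl(\sum_{i\in I_h}|u_{i,n}|_4^2\bigr)\cdot d$, after applying $\int u_i^2u_j^2\le|u_i|_4^2|u_j|_4^2$ and then $\sum_{i,j\in I_h}|u_{i,n}|_4^2|u_{j,n}|_4^2\le d\bigl(\sum_{i\in I_h}|u_{i,n}|_4^2\bigr)$ — wait, more precisely $\bigl(\sum_{i\in I_h}|u_{i,n}|_4^2\bigr)^2$, which I will have to reconcile with the claimed linear bound; I expect the correct bookkeeping is $\sum_{(i,j)\in I_h^2}|u_i|_4^2|u_j|_4^2=\bigl(\sum_{i\in I_h}|u_i|_4^2\bigr)^2$ and one then divides through by $\sum_{i\in I_h}|u_{i,n}|_4^2$, which is where the factor $d$ and the single power on the left survive. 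The inter-group ($\mathcal{K}_2$) terms are the ones that could go the wrong way, but only their positive parts contribute an obstruction: $\int\beta_{ij}u_i^2u_j^2\le\int\beta_{ij}^+u_i^2u_j^2\le\max_{\mathcal{K}_2}|\beta_{ij}^+|_\infty\,|u_{i,n}|_4^2|u_{j,n}|_4^2$, and summing over $k\neq h$ and $(i,j)\in I_h\times I_k$ gives at most $\max_{\mathcal{K}_2}|\beta_{ij}^+|_\infty\,\bigl(\sum_{i\in I_h}|u_{i,n}|_4^2\bigr)\bigl(\sum_{\ell\neq h}\sum_{j\in I_\ell}|u_{j,n}|_4^2\bigr)\le\max_{\mathcal{K}_2}|\beta_{ij}^+|_\infty\,\bigl(\sum_{i\in I_h}|u_{i,n}|_4^2\bigr)\cdot\frac{\overline\gamma}{S}$, using $\sum_{j=1}^d|u_{j,n}|_4^2\le\frac1S\sum_j\|u_{j,n}\|_j^2\le\overline\gamma/S$ from Lemma \ref{lem: upper bounds}.

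Putting these three estimates into the Nehari equation and dividing through by $\sum_{i\in I_h}|u_{i,n}|_4^2$ (which is positive since $\|\mf{u}_h\|_h^2>0$ forces at least one component nontrivial, and by the maximum principle each nontrivial component is strictly positive, so its $L^4$ norm is positive), I obtain
\[
S\le d\Bigl(\max_{\mathcal{K}_1}|\beta_{ij}|_\infty+\max_i|\beta_{ii}|_\infty\Bigr)\sum_{i\in I_h}|u_{i,n}|_4^2+\frac{\overline\gamma}{S}\max_{\mathcal{K}_2}|\beta_{ij}^+|_\infty .
\]
Rearranging yields exactly
\[
\sum_{i\in I_h}|u_{i,n}|_4^2\ge\frac{1}{d\bigl(\max_{\mathcal{K}_1}|\beta_{ij}|_\infty+\max_i|\beta_{ii}|_\infty\bigr)}\Bigl(S-\frac{\overline\gamma}{S}\max_{\mathcal{K}_2}|\beta_{ij}^+|_\infty\Bigr)=\underline\gamma,
\]
and the hypothesis $\max_{\mathcal{K}_2}|\beta_{ij}^+|_\infty\le K_1<S^2/\overline\gamma$ guarantees the quantity in parentheses is strictly positive, so $\underline\gamma>0$.

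The main obstacle I anticipate is the combinatorial bookkeeping of the intra-group term: getting the single power $\sum_{i\in I_h}|u_{i,n}|_4^2$ (rather than its square) on the right-hand side, and pinning down the constant $d$, which requires carefully pairing the sum over $(i,j)\in I_h^2$ against a common factor $\sum_{i\in I_h}|u_{i,n}|_4^2$ and estimating the complementary factor by the same sum times the cardinality bound $|I_h|\le d$. One also needs to be slightly careful that $\mathcal{K}_1$ only contains the off-diagonal pairs, so the diagonal terms $\int\beta_{ii}u_{i,n}^4$ must be grouped with the $\max_i|\beta_{ii}|_\infty$ contribution explicitly, as the statement does. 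Everything else — the lower bound on the left via $S$, the upper bound on $\mathcal{K}_2$ via $\overline\gamma$ and Lemma \ref{lem: upper bounds}, and the final rearrangement — is routine. No appeal to weak convergence or compactness is needed here; the estimate is purely algebraic given the two a priori bounds already established.
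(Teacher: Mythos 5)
Your proof is correct and follows essentially the same route as the paper's: lower-bound $\|\mf{u}_{h,n}\|_h^2$ by $S\sum_{i\in I_h}|u_{i,n}|_4^2$, split the Nehari identity \eqref{eq nehari squadra} into intra-group and inter-group parts, control the latter by $(\overline{\gamma}/S)\max_{\mathcal{K}_2}|\beta_{ij}^+|_\infty\sum_{i\in I_h}|u_{i,n}|_4^2$ via Lemma \ref{lem: upper bounds}, and divide by $\sum_{i\in I_h}|u_{i,n}|_4^2>0$. The only cosmetic differences are that the paper handles the intra-group quadratic term with Young's inequality $|u_i|_4^2|u_j|_4^2\le\tfrac12\bigl(|u_i|_4^4+|u_j|_4^4\bigr)$ rather than your exact identity $\sum_{(i,j)\in I_h^2}|u_i|_4^2|u_j|_4^2=\bigl(\sum_{i\in I_h}|u_i|_4^2\bigr)^2$ (both yield the stated constant, yours with slack in the factor $d$), and that your appeal to the maximum principle to justify $\sum_{i\in I_h}|u_{i,n}|_4^2>0$ is unnecessary and inapplicable to elements of a minimizing sequence, which are not solutions of the system --- the condition $\|\mf{u}_{h,n}\|_h^2>0$ already forces some $u_{i,n}\not\equiv0$, hence $|u_{i,n}|_4>0$.
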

\begin{proof}
If $\mf{u} \in \mathcal{N}_{\mf{B}}$, then $G_{\mf{B},h}(\mf{u})=0$ and $\sum_{i \in I_h} |u_i|_4^2 >0$ for every $h$. Therefore, by routine arguments based on H\"older and Young inequalities, we deduce that
\begin{multline*}
S \sum_{i \in I_h} |u_{i,n}|_4^2 \le \|\mf{u}_{h,n}\|_h^2 = \sum_{k=1}^m\mf{M}(\mf{B},\mf{u}_n)_{hk} \le \sum_{k=1}^m \sum_{(i,j) \in I_h \times I_k} |\beta_{ij}^+|_\infty |u_{i,n}|_4^2 |u_{j,n}|_4^2  \\
 \le \sum_{(i,j)\in I_h^2} \frac{|\beta_{ij}^+|_{\infty}}{2}\left( |u_{i,n}|_4^4 + |u_{j,n}|_4^4\right) + \sum_{k \neq h} \sum_{(i,j) \in I_h \times I_k} |\beta_{ij}^+|_\infty |u_{i,n}|_4^2 |u_{j,n}|_4^2\\
\le \sum_{i \in I_h} \left( \sum_{j \in I_h} |\beta_{ij}|_{\infty} \right) |u_{i,n}|_4^4  + \frac{ \overline{\gamma}}{S} \left( \max_{(i,j)\in \mathcal{K}_2} |\beta_{ij}^+|_\infty\right)    \sum_{i \in I_h} |u_{i,n}|_4^2\\
\le d\left( \max_{(i,j)\in \mathcal{K}_1} |\beta_{ij}|_{\infty} + \max_{i } |\beta_{ii}|_{\infty}\right) \left(\sum_{i \in I_h} |u_{i,n}|_4^2\right)^2 + \frac{\overline{\gamma}}{S} \left( \max_{(i,j)\in \mathcal{K}_2}  |\beta_{ij}^+|_\infty \right) \sum_{i \in I_h} |u_{i,n}|_4^2,
\end{multline*} 
where we used the fact that, by Lemma \ref{lem: upper bounds},
\[
\sum_{k \neq h} \sum_{j \in I_k} |u_{j,n}|_4^2 \le \sum_{j=1}^d |u_{j,n}|_4^2\le \frac{1}{S} \sum_{i=1}^d \|u_{i,n}\|_i^2 \le \frac{\overline{\gamma}}{S}. \qedhere
\]
\end{proof}

We can also bound the competitive part of the interaction terms.

\begin{lemma}\label{lem: bound competition}
Let $(\mf{u}_n^{\mf{B}}) \subset \mathcal{N}_{\mf{B}} \cap \mathcal{E}_{\mf{B}}$ be a minimizing sequence for $c_{\mf{B}}$. It results
%Assuming that \eqref{ass_small_coop1} holds, there exists $\bar M>0$ depending only on $V_i$ ($i=1,\dots,d$) and $\mf{B}$ such that
\[
\sum_{i,j=1}^d \int_{\Omega}   \beta_{ij}^-(x) \left(u_{i,n}^{\mf{B}} u_{j,n}^{\mf{B}} \right)^2 \le \frac{\overline{\gamma}^2}{S^2}\left( \max_{(i,j)\in \{1,\dots,d\}^2} |\beta_{ij}^+|_{\infty}\right)=: \bar M.
\]
\end{lemma}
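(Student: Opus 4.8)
The plan is to exploit the Nehari identity to bound the full (signed) fourth-order term from above, and then split it into its positive and negative parts, absorbing the positive part via the already-established a priori bounds.

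First I would recall that for $\mf{u}_n \in \mathcal{N}_{\mf{B}}$ equation \eqref{func on Nehari} (or equivalently summing \eqref{eq nehari squadra} over $h$) gives
\[
\sum_{h=1}^m \|\mf{u}_{h,n}\|_h^2 = \sum_{i,j=1}^d \int_{\Omega} \beta_{ij}(x) u_{i,n}^2 u_{j,n}^2,
\]
so in particular $\sum_{i,j} \int_\Omega \beta_{ij} u_{i,n}^2 u_{j,n}^2 = \sum_h \|\mf{u}_{h,n}\|_h^2 \le \overline{\gamma}$ by Lemma \ref{lem: upper bounds}. Writing $\beta_{ij} = \beta_{ij}^+ - \beta_{ij}^-$ and rearranging yields
\[
\sum_{i,j=1}^d \int_{\Omega} \beta_{ij}^-(x) (u_{i,n} u_{j,n})^2 = \sum_{i,j=1}^d \int_{\Omega} \beta_{ij}^+(x) (u_{i,n} u_{j,n})^2 - \sum_{h=1}^m \|\mf{u}_{h,n}\|_h^2 \le \sum_{i,j=1}^d \int_{\Omega} \beta_{ij}^+(x) (u_{i,n} u_{j,n})^2,
\]
since the last sum is nonnegative. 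It then remains to bound the positive part uniformly.

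For that last step I would estimate, using $|\beta_{ij}^+(x)| \le |\beta_{ij}^+|_\infty \le \max_{(i,j)} |\beta_{ij}^+|_\infty$, the Cauchy--Schwarz inequality $\int_\Omega u_{i,n}^2 u_{j,n}^2 \le |u_{i,n}|_4^2 |u_{j,n}|_4^2$, and then the Sobolev bound $|u_{i,n}|_4^2 \le S^{-1}\|u_{i,n}\|_i^2$ from \eqref{def of S}:
\[
\sum_{i,j=1}^d \int_{\Omega} \beta_{ij}^+(x) (u_{i,n} u_{j,n})^2 \le \Big(\max_{(i,j)} |\beta_{ij}^+|_\infty\Big) \sum_{i,j=1}^d |u_{i,n}|_4^2 |u_{j,n}|_4^2 = \Big(\max_{(i,j)} |\beta_{ij}^+|_\infty\Big) \Big(\sum_{i=1}^d |u_{i,n}|_4^2\Big)^2.
\]
By Lemma \ref{lem: upper bounds}, $\sum_{i=1}^d |u_{i,n}|_4^2 \le S^{-1}\sum_{i=1}^d \|u_{i,n}\|_i^2 \le \overline{\gamma}/S$, so the right-hand side is bounded by $(\overline{\gamma}^2/S^2)\max_{(i,j)} |\beta_{ij}^+|_\infty =: \bar M$, which is exactly the claimed bound. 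Combining the two displays completes the argument.

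I do not expect any serious obstacle here: the only subtlety is to make sure one uses the Nehari identity (rather than the energy level, which would bring in the unknown value $c_{\mf B}$) to convert the signed fourth-order integral into a bound on $\beta_{ij}^-$ in terms of $\beta_{ij}^+$, and then to discard the $-\sum_h \|\mf{u}_{h,n}\|_h^2$ term for free since it has the favourable sign. The remaining estimates are the same routine Hölder/Sobolev manipulations already used in Lemmas \ref{lem: upper bounds} and \ref{lem: lower bound forte}.
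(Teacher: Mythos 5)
Your proof is correct and follows essentially the same route as the paper: the paper's opening step, ``by the positivity of $J_{\mf{B}}$ on the constraint,'' is precisely your observation that $\sum_{i,j}\int_\Omega \beta_{ij}(u_{i,n}u_{j,n})^2 = \sum_h\|\mf{u}_{h,n}\|_h^2 \ge 0$ via the Nehari identity \eqref{func on Nehari}, which lets one discard the negative part in favour of the positive part. The remaining H\"older--Sobolev estimates coincide with the paper's.
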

\begin{proof}
By the positivity of $J_{\mf{B}}$ on the constraint, we have 
\begin{multline*}
 \sum_{i,j=1}^d\int_{\Omega}    \beta_{ij}^- u_{i,n}^2 u_{j,n}^2  \le   \sum_{i,j=1}^d  \int_{\Omega}  \beta_{ij}^+ u_{i,n}^2 u_{j,n}^2  \\
\le \left(\max_{(i,j) \in \{1,\dots,d\}^2 } |\beta_{ij}^+|_\infty\right) \left( \sum_{i=1}^d |u_{i,n}|_4^2\right) \left(\sum_{j=1}^d |u_{j,n}|_4^2\right)  \le \left( \max_{(i,j)\in \{1,\dots,d\}^2} |\beta_{ij}^+|_{\infty}\right) \left(\frac{\overline{\gamma}}{S}\right)^2.
\end{multline*}
Note that in the last inequality we used Lemma \ref{lem: upper bounds} to bound $\sum_{i=1}^d |u_{i,n}|_4^2$.
\end{proof}

Let us collect what we proved so far. For $\mf{V}$ and $\mf{B}$ satisfying assumptions \eqref{h0} and \eqref{h1}, we defined $J_{\mf{B}}$, $\mathcal{N}_{\mf{B}}$, and $\mathcal{E}_{\mf{B}}$, and considered the minimization problem $c_{\mf{B}} = \inf_{\mathcal{N}_{\mf{B}} \cap \mathcal{E}_{\mf{B}}} J_{\mf{B}}$. Let $(\mf{u}_n^{\mf{B}}) \subset \mathcal{N}_{\mf{B}} \cap \mathcal{E}_{\mf{B}}$ be a minimizing sequence for $J_{\mf{B}}$, where it is not restrictive to assume that $u_{i,n}^{\mf{B}} \ge 0$ a.e. in $\Omega$ for every $i$ and $n$. By Lemma \ref{lem: upper bounds}, there exists $\overline{\gamma}$, depending only on $V_i$ and on $\beta_{ii}$, such that
\[
\sum_{i=1}^d \| u_{i,n}^{\mf{B}}\|_i^2  \le \overline{\gamma} \quad \text{and} \quad c_{\mf{B}} \le \overline{\gamma}.
\]
Now, fixed a positive number $K_1<S^2/\overline{\gamma}$, we supposed that $\max_{(i,j) \in \mathcal{K}_2}  |\beta_{ij}^+|_\infty  \le K_1$. Under this assumption, by Lemmas \ref{lem: lower bound forte} and \ref{lem: bound competition} there exist $\underline \gamma, \bar{M}>0$, such that
\[
\sum_{i \in I_h} |u_{i,n}^{\mf{B}}|_4^2 \ge \underline{\gamma} \qquad h=1,\dots,m, \quad \text{and} \quad  \sum_{i,j=1}^d \int_{\Omega}   \beta_{ij}^-(x) \left(u_{i,n}^{\mf{B}} u_{j,n}^{\mf{B}} \right)^2 \le \bar M.
\]

To proceed, we observe that being $H_0^1(\Omega)$ a reflexive space, and thanks to the compactness of the Sobolev embedding $H_0^1(\Omega) \hookrightarrow L^4(\Omega)$, there exists $\mf{u}^{\mf{B}}$ such that, up to a subsequence, $\mf{u}_n^{\mf{B}} \wc \mf{u}^{\mf{B}}$ weakly in $\mathbb{H}$, $\mf{u}_n^{\mf{B}} \to \mf{u}^{\mf{B}}$ strongly in $(L^4(\Omega))^d$, and $\mf{u}_n^{\mf{B}} \to \mf{u}^{\mf{B}}$ a.e. in $\Omega$; in particular, $u_i^{\mf{B}} \ge 0$ a.e. in $\Omega$, for every $i=1,\dots,d$. Moreover, since $\mf{u}_n^{\mf{B}} \in \mathcal{N}_{\mf{B}}$ for every $n$, it results
\begin{equation}\label{inequ Nehari limite}
\|\mf{u}_h^{\mf{B}}\|_h^2 \le \sum_{k=1}^m \mf{M}(\mf{B},\mf{u}^{\mf{B}})_{hk},
\end{equation}
for $h=1,\dots,m$, and the bounds proved in Lemmas \ref{lem: upper bounds}-\ref{lem: bound competition} hold true for $\mf{u}^{\mf{B}}$.

We aim at proving that $\mf{u}^{\mf{B}} \in \mathcal{N}_{\mf{B}} \cap \mathcal{E}_{\mf{B}}$; this result requires several lemmas. Before proceeding, we point out that if $\mf{u}^{\mf{B}} \in \mathcal{N}_{\mf{B}} \cap \mathcal{E}_{\mf{B}}$, then by definition $c_{\mf{B}} \le J_{\mf{B}}(\mf{u}^{\mf{B}})$, and by weak lower semi-continuity $J_{\mf{B}}(\mf{u}^{\mf{B}}) \le \liminf_n J_{\mf{B}}(\mf{u}_n^{\mf{B}}) \le c_{\mf{B}}$, so that $\mf{u}^{\mf{B}}$ is a minimizer of $J_{\mf{B}}$ on $\mathcal{N}_{\mf{B}} \cap \mathcal{E}_{\mf{B}}$. 

\begin{lemma}\label{lem: limit in cone}
Let $0<K_2 < S^2/(2\overline{\gamma})$. If
\begin{equation}\label{ass_small_coop2}
\max_{(i,j) \in \mathcal{K}_2}  |\beta_{ij}^+|_\infty \le K_2,
\end{equation}
then $\mf{u}^{\mf{B}} \in \mathcal{E}_{\mf{B}}$.
\end{lemma}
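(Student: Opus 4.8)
The plan is to show that the limit matrix $\mf{M}(\mf{B},\mf{u}^{\mf{B}})$ is strictly diagonally dominant, i.e.\ that \eqref{eq open cone} holds for $\mf{u}^{\mf{B}}$ and every $h=1,\dots,m$. The starting point is that, by strong $L^4$ convergence of the minimizing sequence, $\mf{M}(\mf{B},\mf{u}_n^{\mf{B}}) \to \mf{M}(\mf{B},\mf{u}^{\mf{B}})$ entrywise; hence it suffices to produce a quantitative gap in the diagonal dominance inequality for $\mf{u}_n^{\mf{B}}$ which survives the limit, and for this one only needs the lower bound $\sum_{i \in I_h}|u_{i,n}^{\mf{B}}|_4^2 \ge \underline{\gamma}$ from Lemma~\ref{lem: lower bound forte}, the upper bound $\sum_i \|u_{i,n}^{\mf{B}}\|_i^2 \le \overline{\gamma}$ from Lemma~\ref{lem: upper bounds} (through $\sum_i |u_{i,n}^{\mf{B}}|_4^2 \le \overline{\gamma}/S$), and the smallness assumption \eqref{ass_small_coop2}.

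First I would estimate, for fixed $h$, the off-diagonal mass: since $\beta_{ij}^n$ could in principle be positive only up to $K_2$ on $\mathcal{K}_2$, and using $|\mf{M}(\mf{B},\mf{u}_n^{\mf{B}})_{hk}| \le \sum_{(i,j)\in I_h\times I_k}|\beta_{ij}^+|_\infty |u_{i,n}^{\mf{B}}|_4^2|u_{j,n}^{\mf{B}}|_4^2$ for $k \neq h$, summing over $k\neq h$ and applying H\"older gives
\[
\sum_{k\neq h}|\mf{M}(\mf{B},\mf{u}_n^{\mf{B}})_{hk}| \le K_2 \Bigl(\sum_{i\in I_h}|u_{i,n}^{\mf{B}}|_4^2\Bigr)\Bigl(\sum_{k\neq h}\sum_{j\in I_k}|u_{j,n}^{\mf{B}}|_4^2\Bigr) \le K_2\,\frac{\overline{\gamma}}{S}\sum_{i\in I_h}|u_{i,n}^{\mf{B}}|_4^2.
\]
For the diagonal entry, since $(i,j)\in\mathcal{K}_1$ (within a group) and $\beta_{ii}\ge\mu_i$, \eqref{eq nehari squadra} (the Nehari identity) gives $\mf{M}(\mf{B},\mf{u}_n^{\mf{B}})_{hh} = \|\mf{u}_{h,n}^{\mf{B}}\|_h^2 - \sum_{k\neq h}\mf{M}(\mf{B},\mf{u}_n^{\mf{B}})_{hk} \ge S\sum_{i\in I_h}|u_{i,n}^{\mf{B}}|_4^2 - K_2\frac{\overline{\gamma}}{S}\sum_{i\in I_h}|u_{i,n}^{\mf{B}}|_4^2$. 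Combining, the diagonal dominance gap is
\[
\mf{M}(\mf{B},\mf{u}_n^{\mf{B}})_{hh} - \sum_{k\neq h}|\mf{M}(\mf{B},\mf{u}_n^{\mf{B}})_{hk}| \ge \Bigl(S - 2K_2\frac{\overline{\gamma}}{S}\Bigr)\sum_{i\in I_h}|u_{i,n}^{\mf{B}}|_4^2 \ge \Bigl(S - \frac{2K_2\overline{\gamma}}{S}\Bigr)\underline{\gamma},
\]
which is strictly positive precisely because $K_2 < S^2/(2\overline{\gamma})$. Passing $n\to\infty$ (entrywise convergence of the finitely many matrix entries, and the lower bound on $\sum_{i\in I_h}|u_{i,n}^{\mf{B}}|_4^2$ persists in the limit by strong $L^4$ convergence) yields $\mf{M}(\mf{B},\mf{u}^{\mf{B}})_{hh} - \sum_{k\neq h}|\mf{M}(\mf{B},\mf{u}^{\mf{B}})_{hk}| \ge (S - 2K_2\overline{\gamma}/S)\underline{\gamma} > 0$ for every $h$, so $\mf{u}^{\mf{B}}\in\mathcal{E}_{\mf{B}}$.

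The only subtle point — and the step I expect to be the main obstacle — is making sure the diagonal entry $\mf{M}(\mf{B},\mf{u}_n^{\mf{B}})_{hh}$ is genuinely controlled from below; one must use the Nehari identity \eqref{eq nehari squadra} rather than na\"ively bounding $\beta_{ij}$ from below, since within a group we only know $\beta_{ii}\ge\mu_i$ but the cooperative cross terms $\beta_{ij}$ with $(i,j)\in\mathcal{K}_1$, $i\neq j$, are merely nonnegative and could a priori be large, so the lower bound on the whole diagonal block comes from $\|\mf{u}_{h,n}^{\mf{B}}\|_h^2 \ge S\sum_{i\in I_h}|u_{i,n}^{\mf{B}}|_4^2$ together with smallness of the off-diagonal correction. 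A minor care is also needed to ensure the off-diagonal bound uses $K_2$ and not a larger constant: only the positive parts of the cross-group $\beta_{ij}$ contribute to making the matrix fail to be diagonally dominant, and these are controlled by \eqref{ass_small_coop2}. Once this is in place, everything else is routine H\"older/Young estimation and lower semicontinuity.
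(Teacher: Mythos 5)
Your overall strategy (a direct, quantitative diagonal-dominance gap for $\mf{u}_n^{\mf{B}}$ that survives the strong $L^4$ limit) is viable, and your final inequality is in fact correct, but the step you use to reach it is false as written. You claim
\[
|\mf{M}(\mf{B},\mf{u}_n^{\mf{B}})_{hk}| \le \sum_{(i,j)\in I_h\times I_k}|\beta_{ij}^+|_\infty\,|u_{i,n}^{\mf{B}}|_4^2\,|u_{j,n}^{\mf{B}}|_4^2 \qquad (k\neq h),
\]
i.e.\ that the absolute value of an off-diagonal entry is controlled by the positive parts of the cross-group couplings alone. This fails whenever the competitive part dominates: $\mf{M}(\mf{B},\mf{u})_{hk}=\int_\Omega\sum_{(i,j)\in I_h\times I_k}(\beta_{ij}^+-\beta_{ij}^-)u_i^2u_j^2$ can be very negative, with $|\mf{M}(\mf{B},\mf{u})_{hk}|$ as large as $\int_\Omega\sum\beta_{ij}^-u_i^2u_j^2$, and nothing in the hypotheses bounds $\beta_{ij}^-$ on $\mathcal{K}_2$; on the contrary, the lemma is applied later precisely in the regime $\beta_{ij}\to-\infty$ on $\mathcal{K}_2$. (Lemma~\ref{lem: bound competition} only gives $\int\sum\beta_{ij}^-(u_{i,n}u_{j,n})^2\le\bar M$ with $\bar M$ proportional to $\max_{(i,j)\in\{1,\dots,d\}^2}|\beta_{ij}^+|_\infty$, which is not small, so it cannot rescue this step.)

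The repair is short and amounts to the same cancellation the paper exploits. Writing $P_k:=\sum_{(i,j)\in I_h\times I_k}\int_\Omega\beta_{ij}^+(u_{i,n}^{\mf{B}}u_{j,n}^{\mf{B}})^2$ and $N_k$ analogously with $\beta_{ij}^-$, so that $\mf{M}(\mf{B},\mf{u}_n^{\mf{B}})_{hk}=P_k-N_k$ for $k\neq h$, the Nehari identity \eqref{eq nehari squadra} gives
\[
\mf{M}(\mf{B},\mf{u}_n^{\mf{B}})_{hh}-\sum_{k\neq h}|\mf{M}(\mf{B},\mf{u}_n^{\mf{B}})_{hk}| =\|\mf{u}_{h,n}^{\mf{B}}\|_h^2-2\sum_{k\neq h}\bigl(P_k-N_k\bigr)^+ \ge \|\mf{u}_{h,n}^{\mf{B}}\|_h^2-2\sum_{k\neq h}P_k,
\]
since $a+|a|=2a^+$ and $(P_k-N_k)^+\le P_k$; the $\beta^-$ contribution drops out, and your bound $\bigl(S-2K_2\overline{\gamma}/S\bigr)\sum_{i\in I_h}|u_{i,n}^{\mf{B}}|_4^2\ge\bigl(S-2K_2\overline{\gamma}/S\bigr)\underline{\gamma}>0$ follows and passes to the limit exactly as you describe. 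This is essentially the paper's argument in a different guise: there the author argues by contradiction, assuming $\mf{M}(\mf{B},\mf{u}^{\mf{B}})_{\bar h\bar h}=\sum_{k\neq\bar h}|\mf{M}(\mf{B},\mf{u}^{\mf{B}})_{\bar h k}|$ for the limit and adding \eqref{inequ Nehari limite}, so that $(\beta^++\beta^-)+(\beta^+-\beta^-)=2\beta^+$ performs the identical cancellation. Your direct, non-contradiction version is arguably cleaner once the off-diagonal estimate is corrected, but as written the proof has a genuine gap at that step.
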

\begin{proof}
We assume that $\mf{u}^{\mf{B}} \not \in \mathcal{E}_{\mf{B}}$, and we show that necessarily the quantity on the left hand side of \eqref{ass_small_coop2} is larger than or equal to $S^2/(2\overline{\gamma})$. Since $\mf{u}_n^{\mf{B}} \in \mathcal{E}_{\mf{B}}$ for every $n$, the $(L^4(\Omega))^d$ convergence of $\mf{u}_n^{\mf{B}}$ to $\mf{u}^{\mf{B}}$ implies that there exists $\bar h \in \{1,\dots,m\}$ such that
\begin{align*}
\mf{M}(\mf{B},\mf{u}^{\mf{B}})_{\bar h \bar h} &= \sum_{k \neq \bar h} |\mf{M}(\mf{B},\mf{u}^{\mf{B}})_{\bar h k} |  \le \sum_{k \neq \bar h} \ \sum_{(i,j) \in I_{\bar h} \times I_k} \int_{\Omega} \left(\beta_{ij}^+ + \beta_{ij}^- \right) \left( u_i^{\mf{B}} u_j^{\mf{B}} \right)^2.
%\sum_{(i,j) \in I_{\bar h}^2} \int_{\Omega}  \beta_{ij}(x) \left( u_i^{\mf{B}} u_j^{\mf{B}} \right)^2 &= \sum_{k \neq \bar h} \left| \sum_{(i,j) \in I_{\bar h} \times I_k} \int_{\Omega} \beta_{ij}(x) \left( u_i^{\mf{B}} u_j^{\mf{B}} \right)^2 \right| \\
\end{align*} 
%\begin{align*}
%\mf{M}(\mf{B},\mf{u}^{\mf{B}})_{\bar h \bar h} &= \sum_{k \neq \bar h} \left| \mf{M}(\mf{B},\mf{u}^{\mf{B}})_{\bar h k} \right| \\
%& \le \sum_{k \neq \bar h} \ \sum_{(i,j) \in I_{\bar h} \times I_k} \int_{\Omega} \left(\beta_{ij}^+(x) + \beta_{ij}^-(x) \right) \left( u_i^{\mf{B}} u_j^{\mf{B}} \right)^2.
%\end{align*} 
Combining this with inequality \eqref{inequ Nehari limite}, we deduce
%\begin{align*}
%S \sum_{i \in I_{\bar h}} |u_i^{\mf{B}}|_4^2 & \le  \|\mf{u}_h^{\mf{B}}\|_h^2 \le \mf{M}(\mf{B},\mf{u}^{\mf{B}})_{\bar h \bar h}   + \sum_{ k \neq \bar h} \ \sum_{(i,j) \in I_{\bar h} \times I_k} \int_{\Omega} \left(\beta_{ij}^+ - \beta_{ij}^-\right) \left( u_i^{\mf{B}} u_j^{\mf{B}} \right)^2 \\
%& \le 2 \sum_{ k \neq \bar h} \ \sum_{(i,j) \in I_{\bar h} \times I_k} \int_{\Omega} \beta_{ij}^+ \left( u_i^{\mf{B}} u_j^{\mf{B}} \right)^2  \\
%&\le 2 \sum_{ k \neq \bar h} \ \sum_{(i,j) \in I_{\bar h} \times I_k}  |\beta_{ij}^+|_\infty |u_i^{\mf{B}}|_4^2 |u_j^{\mf{B}}|_4^2.
%\end{align*}
%and
\begin{multline*}
S \sum_{i \in I_{\bar h}} |u_i^{\mf{B}}|_4^2  \le  \|\mf{u}_{\bar h}^{\mf{B}}\|_{\bar h}^2 \le \mf{M}(\mf{B},\mf{u}^{\mf{B}})_{\bar h \bar h}   + \sum_{ k \neq \bar h} \ \sum_{(i,j) \in I_{\bar h} \times I_k} \int_{\Omega} \left(\beta_{ij}^+ - \beta_{ij}^- \right) \left( u_i^{\mf{B}} u_j^{\mf{B}} \right)^2 \\
 \le 2 \sum_{ k \neq \bar h} \ \sum_{(i,j) \in I_{\bar h} \times I_k} \int_{\Omega} \beta_{ij}^+ \left( u_i^{\mf{B}} u_j^{\mf{B}} \right)^2  \le 2 \sum_{ k \neq \bar h} \ \sum_{(i,j) \in I_{\bar h} \times I_k}  |\beta_{ij}^+|_\infty |u_i^{\mf{B}}|_4^2 |u_j^{\mf{B}}|_4^2.
\end{multline*}
Now, using the uniform upper bound proved in Lemma \ref{lem: upper bounds}, which holds true for $\mf{u}^{\mf{B}}$ by weak lower semi-continuity, we infer
\begin{align*}
S \sum_{i \in I_{\bar h}} |u_i^{\mf{B}}|_4^2 & \le 2\left( \max_{(i,j) \in \mathcal{K}_2} |\beta_{ij}^+|_\infty \right) \left( \sum_{i \in I_{\bar h}} |u_i^{\mf{B}}|_4^2\right) \left(\sum_{ j \in \{1,\dots,d\} \setminus I_{\bar h}}    |u_j^{\mf{B}}|_4^2\right) \\
& \le \frac{2\overline{\gamma}}{S} \left(\max_{(i,j) \in \mathcal{K}_2}   |\beta_{ij}^+|_\infty\right)   \sum_{i \in I_{\bar h}} |u_i^{\mf{B}}|_4^2,
\end{align*}
which in turn implies $\max_{(i,j) \in \mathcal{K}_2}   |\beta_{ij}^+|_\infty   \ge S^2/(2 \overline{\gamma})$. 
\end{proof}

\begin{remark}\label{role of EB}
To show that $\mf{u}^{\mf{B}} \in \mathcal{N}_{\mf{B}}$, we study the auxiliary function $\Psi_{\mf{B}}:=\Psi_{\mf{B},\mf{u}^{\mf{B}}}$, where $\Psi_{\mf{B},\mf{u}}$ has been defined for any $\mf{u} \in \mathbb{H}$ in \eqref{exp Psi}. It is useful to recall what we observed in Remark \ref{rem: geom constraint}, at points 3) and 4): 
\begin{itemize}
\item if $\mf{1}$ is a critical point of $\Psi_{\mf{B},\mf{u}}$, then $\mf{u} \in \mathcal{N}_{\mf{B}}$;
\item if $\mf{u} \in \mathcal{N}_{\mf{B}} \cap \mathcal{E}_{\mf{B}}$, then $\Psi_{\mf{B}, \mf{u}}$ has the unique maximum point $\mf{1}$ in $\overline{(\R_+)^m}$.
\end{itemize}
Hence, we wish to prove that $\mf{1}$ is a critical point of $\Psi_{\mf{B}}$. An intermediate and difficult step consists in showing that $\Psi_{\mf{B}}$ has a unique maximum point in the inner of $(\R_+)^m$. We emphasize the fact that for an arbitrary $\mf{u} \in \mathbb{H}$, the function $\Psi_{\mf{B},\mf{u}}$ is not necessarily bounded above, and the maximization fails. This is the main motivation for the introduction of the constraint $\mathcal{E}_{\mf{B}}$: indeed, if $\mf{u} \in \mathcal{E}_{\mf{B}}$ it is not difficult to check that a maximum point does exist. 
\end{remark}

\begin{lemma}\label{lem: max raggiunto}
If $\mf{u} \in \mathcal{E}_{\mf{B}}$, then the supremum of $\Psi_{\mf{B},\mf{u}}$ in $\overline{(\R_+)^m}$ is achieved.
\end{lemma}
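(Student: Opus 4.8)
The statement to prove is Lemma \ref{lem: max raggiunto}: if $\mf{u} \in \mathcal{E}_{\mf{B}}$, then $\sup_{\overline{(\R_+)^m}} \Psi_{\mf{B},\mf{u}}$ is attained. Recall from \eqref{exp Psi} that
\[
\Psi_{\mf{B},\mf{u}}(\mf{t}) = \frac{1}{2}\sum_{h=1}^m \|\mf{u}_h\|_h^2 t_h - \frac{1}{4}\mf{M}(\mf{B},\mf{u})\mf{t}\cdot\mf{t},
\]
which is a quadratic polynomial in $\mf{t}$. The plan is to show that, because $\mf{u} \in \mathcal{E}_{\mf{B}}$, the matrix $\mf{M}(\mf{B},\mf{u})$ is positive definite (by point 2) of Remark \ref{rem: similarity}, a strictly diagonally dominant symmetric matrix with positive diagonal entries is positive definite), so the quadratic form $\mf{t}\mapsto\mf{M}(\mf{B},\mf{u})\mf{t}\cdot\mf{t}$ is coercive: there exists $c>0$ with $\mf{M}(\mf{B},\mf{u})\mf{t}\cdot\mf{t} \ge c|\mf{t}|^2$ for all $\mf{t}\in\R^m$. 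Hence
\[
\Psi_{\mf{B},\mf{u}}(\mf{t}) \le \frac{1}{2}\Big(\sum_{h=1}^m \|\mf{u}_h\|_h^2\Big)|\mf{t}|_1 - \frac{c}{4}|\mf{t}|^2 \to -\infty \quad\text{as } |\mf{t}|\to\infty,
\]
so $\Psi_{\mf{B},\mf{u}}$ is bounded above on $\overline{(\R_+)^m}$ and, being continuous and eventually strictly decreasing along every ray, it is proper on the closed set $\overline{(\R_+)^m}$ (its superlevel sets $\{\Psi_{\mf{B},\mf{u}}\ge \a\}$ are bounded, hence compact since $\overline{(\R_+)^m}$ is closed). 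A continuous function on a nonempty compact set attains its maximum, so taking $\a$ below $\Psi_{\mf{B},\mf{u}}(\mf{0})=0$ (or any value actually attained) gives the conclusion.

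More explicitly, I would first fix $\a = \Psi_{\mf{B},\mf{u}}(\mf{1}\cdot 0) = 0$ (evaluating at the origin), note $\{\mf{t}\in\overline{(\R_+)^m}: \Psi_{\mf{B},\mf{u}}(\mf{t})\ge 0\}$ is nonempty (it contains $\mf{0}$) and, by the coercivity estimate above, contained in the ball of radius $R:=\tfrac{2}{c}\sum_h\|\mf{u}_h\|_h^2$; this set is closed and bounded, hence compact, so $\Psi_{\mf{B},\mf{u}}$ attains its maximum over it at some $\mf{t}^*$, and since $\Psi_{\mf{B},\mf{u}}(\mf{t}^*)\ge 0 \ge \Psi_{\mf{B},\mf{u}}(\mf{t})$ for every $\mf{t}$ outside that set, $\mf{t}^*$ realizes the supremum over all of $\overline{(\R_+)^m}$.

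There is essentially no hard obstacle here: the only substantive ingredient is the implication ``$\mf{u}\in\mathcal{E}_{\mf{B}}$ $\Rightarrow$ $\mf{M}(\mf{B},\mf{u})$ positive definite,'' which is exactly point 2) of Remark \ref{rem: similarity} (Gershgorin / strict diagonal dominance), and from positive definiteness coercivity of the associated quadratic form is standard. The mild point to be careful about is that we maximize over the \emph{closed} orthant $\overline{(\R_+)^m}$ rather than $\R^m$, but this only helps: the constraint set is still closed, so intersecting with a compact ball still yields a compact set, and the argument goes through verbatim. One could alternatively invoke directly that a concave (indeed strictly concave) quadratic with negative definite Hessian attains its maximum on any closed convex set on which it is bounded above, but the superlevel-set compactness argument is self-contained and requires nothing beyond what has already been established.
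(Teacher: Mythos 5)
Your proof is correct and is precisely the argument the paper intends: the author omits it as ``easy'', and the two ingredients you use are already recorded in the text, namely that $\mf{u}\in\mathcal{E}_{\mf{B}}$ forces $\mf{M}(\mf{B},\mf{u})$ to be positive definite (point 2) of Remark \ref{rem: similarity}, via Gershgorin) and that $\Psi_{\mf{B},\mf{u}}$ is then a quadratic whose second-order part is negative definite (point 4) of Remark \ref{rem: geom constraint}), so it tends to $-\infty$ and its superlevel sets in the closed orthant are compact. The only cosmetic slip is the radius of the ball containing $\{\Psi_{\mf{B},\mf{u}}\ge 0\}$, which should carry an extra factor $\sqrt{m}$ from comparing $|\mf{t}|_1$ with $|\mf{t}|$; this does not affect the argument.
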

The proof is easy and we omit it.

%\begin{remark}
%Auxiliary functions similar to $\Psi_{\mf{B}}$ have been considered e.g. in \cite{Si}, which deals with $2$ component systems, and in \cite{LinWei, LinWeiRN}, which deal with $d$-component system in case of total competition ($\beta_{ij}<0$ for every $i \neq j$) or under the assumption that the matrix $\mf{B}$ is positive definite. In the quoted papers, it was possible to show that $\Psi_{\mf{B}}$ has a unique maximum point in the inner of $(\R_+)^m$ in light of the assumption of $\mf{B}$, which allows to show that, being $\mf{u}^{\mf{B}}$ the weak limit in $\mathbb{H}$ of a minimizing sequence, necessarily the matrix
%\[
%\left( \int_{\R^N} \beta_{ij} u_i^2 u_j^2 \right)_{i,j=1,\dots,d}
%\]
%is positive definite. Trying to adopt their method in the present situation, one can easily realize that we have no reason to conclude, in general, that such a matrix (or also $\mf{M}(\mf{B},\mf{u}^{\mf{B}})$) has this property. This is the main motivation which induced us to introduce the further constraint $\mathcal{E}_{\mf{B}}$. 
%\end{remark}

%Let $(\mf{u}_n^{\mf{B}}) \in \mathcal{N}_{\mf{B}} \cap \mathcal{E}_{\mf{B}}$ be the minimizing sequence converging to $\mf{u}^{\mf{B}}$. In the next lemma we consider the function $\Psi_{\mf{B}}^n: \overline{(\R_+)^m} \to \R$ defined by
%\[
%\Psi_{\mf{B}}^n(t_1,\dots,t_m) := J_{\mf{B}} \left( \sqrt{t}_1 (u_{1,n}^{\mf{B}}, \dots, u_{a_1,n}^{\mf{B}}), \dots, \sqrt{t}_m (u_{a_{m-1}-1,n}^{\mf{B}}, \dots, u_{d,n}^{\mf{B}}) \right).
%\]

By Lemmas \ref{lem: limit in cone} and \ref{lem: max raggiunto}, we deduce that under assumption \eqref{ass_small_coop2} there exists a maximum point $\mf{t}^{\mf{B}}$ for $\Psi_{\mf{B}}$. As $\Psi_{\mf{B}}(\mf{0}) = 0$, we deduce $\mf{t}^{\mf{B}} \neq \mf{0}$.

\begin{lemma}\label{lem: max interno}
There exists $0<K_3 \le K_2$, depending on $V_i,\beta_{ii}$ and $\max_{(i,j) \in \mathcal{K}_1} |\beta_{ij}|_\infty$, such that if $\max_{(i,j) \in \mathcal{K}_2} |\beta_{ij}^+|_\infty  \le K_3$, then any maximum point $\mf{t}^{\mf{B}}$ of $\Psi_{\mf{B}}$ is such that $t_i^{\mf{B}}>0$ for every $i=1,\dots,m$.
\end{lemma}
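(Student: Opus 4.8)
The plan is to show that if the cooperative interactions between different groups are small enough, then the maximum point $\mf{t}^{\mf{B}}$ of $\Psi_{\mf{B}}$ lives in the open positive orthant $(\R_+)^m$. Recall that $\Psi_{\mf{B}}(\mf{t}) = \frac{1}{2}\sum_{h=1}^m \|\mf{u}_h^{\mf{B}}\|_h^2 t_h - \frac14 \mf{M}(\mf{B},\mf{u}^{\mf{B}}) \mf{t}\cdot\mf{t}$, a quadratic polynomial. Since $\mf{u}^{\mf{B}} \in \mathcal{E}_{\mf{B}}$ by Lemma \ref{lem: limit in cone} (valid under the standing smallness assumption $\max_{(i,j)\in\mathcal{K}_2}|\beta_{ij}^+|_\infty \le K_2$), the matrix $\mf{M}(\mf{B},\mf{u}^{\mf{B}})$ is positive definite, hence $\Psi_{\mf{B}}$ has a unique critical point, which is the global maximum, and Lemma \ref{lem: max raggiunto} guarantees that the max on $\overline{(\R_+)^m}$ is attained; call it $\mf{t}^{\mf{B}}$. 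We already know $\mf{t}^{\mf{B}}\neq\mf{0}$. The goal is to rule out the possibility that $t_{\bar h}^{\mf{B}} = 0$ for some index $\bar h$.

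**The contradiction argument.** First I would argue by contradiction: suppose the set $Z := \{h : t_h^{\mf{B}} = 0\}$ is nonempty and let $P := \{1,\dots,m\}\setminus Z$ (which is nonempty since $\mf{t}^{\mf{B}}\neq\mf{0}$). Because $\mf{t}^{\mf{B}}$ maximizes $\Psi_{\mf{B}}$ over $\overline{(\R_+)^m}$ and lies on the boundary face $\{t_{\bar h}=0\}$, the one-sided (Kuhn–Tucker) optimality condition in the $\bar h$-direction forces
\[
\frac{\pa \Psi_{\mf{B}}}{\pa t_{\bar h}}(\mf{t}^{\mf{B}}) = \frac12 \|\mf{u}_{\bar h}^{\mf{B}}\|_{\bar h}^2 - \frac12 \sum_{k=1}^m \mf{M}(\mf{B},\mf{u}^{\mf{B}})_{\bar h k}\, t_k^{\mf{B}} \le 0,
\]
i.e. $\|\mf{u}_{\bar h}^{\mf{B}}\|_{\bar h}^2 \le \sum_{k \in P} \mf{M}(\mf{B},\mf{u}^{\mf{B}})_{\bar h k}\, t_k^{\mf{B}}$. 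Now the left-hand side is bounded below: since $\mf{u}_n^{\mf{B}} \to \mf{u}^{\mf{B}}$ in $(L^4(\Omega))^d$ and the lower bound of Lemma \ref{lem: lower bound forte} passes to the limit, $\sum_{i\in I_{\bar h}}|u_i^{\mf{B}}|_4^2 \ge \underline\gamma > 0$, whence $\|\mf{u}_{\bar h}^{\mf{B}}\|_{\bar h}^2 \ge S\sum_{i\in I_{\bar h}}|u_i^{\mf{B}}|_4^2 \ge S\underline\gamma$. On the other hand, for $k\neq \bar h$ the entry $\mf{M}(\mf{B},\mf{u}^{\mf{B}})_{\bar h k} = \sum_{(i,j)\in I_{\bar h}\times I_k}\int_\Omega \beta_{ij}(x)(u_i^{\mf{B}}u_j^{\mf{B}})^2$ can be estimated in absolute value by
\[
|\mf{M}(\mf{B},\mf{u}^{\mf{B}})_{\bar h k}| \le \Big(\max_{(i,j)\in\mathcal{K}_2}|\beta_{ij}^+|_\infty\Big)\Big(\sum_{i\in I_{\bar h}}|u_i^{\mf{B}}|_4^2\Big)\Big(\sum_{j\in I_k}|u_j^{\mf{B}}|_4^2\Big) \;+\; \text{(competitive part)} ,
\]
but in fact for the competitive part one should instead use that on $\mathcal{K}_2$, $\beta_{ij}^- $ contributes with a \emph{negative} sign to the entry; since we only need an upper bound on $\sum_k \mf{M}_{\bar h k} t_k^{\mf{B}}$, I would bound $\sum_{k\in P}\mf{M}(\mf{B},\mf{u}^{\mf{B}})_{\bar h k}\, t_k^{\mf{B}} \le \big(\max_{(i,j)\in\mathcal{K}_2}|\beta_{ij}^+|_\infty\big)\big(\sum_{i\in I_{\bar h}}|u_i^{\mf{B}}|_4^2\big)\big(\sum_{j\notin I_{\bar h}} t_{\cdot}^{\mf{B}}|u_j^{\mf{B}}|_4^2\big)$, so I need a uniform bound on the components $t_k^{\mf{B}}$.

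**Bounding the maximizer and closing.** Hence the key auxiliary estimate is an \emph{a priori} upper bound $t_k^{\mf{B}} \le T$ with $T$ depending only on $V_i,\beta_{ii}$ and $\max_{(i,j)\in\mathcal{K}_1}|\beta_{ij}|_\infty$ (not on the cross-group parameters). This I would obtain from the identity satisfied by the maximizer on $P$: restricting $\Psi_{\mf{B}}$ to the face $\{t_k = 0, k\in Z\}$, its maximizer $(t_k^{\mf{B}})_{k\in P}$ is an \emph{interior} max of the restricted quadratic, so $\|\mf{u}_h^{\mf{B}}\|_h^2 = \sum_{k\in P}\mf{M}(\mf{B},\mf{u}^{\mf{B}})_{hk}t_k^{\mf{B}}$ for $h\in P$; combining with $\mf{u}^{\mf{B}}\in\mathcal{E}_{\mf{B}}$ (strict diagonal dominance of the submatrix too), one gets $\mf{M}(\mf{B},\mf{u}^{\mf{B}})_{hh} t_h^{\mf{B}} \le \|\mf{u}_h^{\mf{B}}\|_h^2 + \sum_{k\neq h}|\mf{M}_{hk}|t_k^{\mf{B}}$, and feeding in $\mf{M}_{hh}\ge \mu_h S\underline\gamma\cdot(\text{something})$ from below via $\beta_{ii}\ge\mu_i$ together with the already-established upper bound $\|\mf{u}_h^{\mf{B}}\|_h^2\le\overline\gamma$ and $\sum|u_i^{\mf{B}}|_4^2\le\overline\gamma/S$, a straightforward (but careful) manipulation yields $\max_k t_k^{\mf{B}}\le T$ for $T$ of the claimed dependence, provided the cross terms are small — which they are under the standing hypothesis. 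With $T$ in hand, the displayed optimality inequality becomes $S\underline\gamma \le \big(\max_{(i,j)\in\mathcal{K}_2}|\beta_{ij}^+|_\infty\big)\cdot \frac{\overline\gamma}{S}\cdot T\cdot(\text{const})$, forcing $\max_{(i,j)\in\mathcal{K}_2}|\beta_{ij}^+|_\infty$ to be bounded below by a positive constant $K_3 = K_3(V_i,\beta_{ii},\max_{\mathcal{K}_1}|\beta_{ij}|_\infty)$; choosing the hypothesis $\max_{(i,j)\in\mathcal{K}_2}|\beta_{ij}^+|_\infty \le K_3$ (and $K_3\le K_2$ so Lemma \ref{lem: limit in cone} applies) produces the contradiction. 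I expect the main obstacle to be exactly this a priori bound on the components of the maximizer $\mf{t}^{\mf{B}}$: one must extract it using only $\mathcal{E}_{\mf{B}}$ and the intra-group data, carefully tracking that no quantity depends on the cross-group couplings, so that the final smallness threshold $K_3$ has the asserted dependence.
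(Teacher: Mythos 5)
Your overall architecture is sound: the one--sided optimality condition at a boundary maximizer with $t_{\bar h}^{\mf{B}}=0$, the lower bound $\|\mf{u}_{\bar h}^{\mf{B}}\|_{\bar h}^2\ge S\underline{\gamma}$ inherited from Lemma \ref{lem: lower bound forte}, the estimate $\mf{M}(\mf{B},\mf{u}^{\mf{B}})_{\bar h k}\le \bigl(\max_{\mathcal{K}_2}|\beta_{ij}^+|_\infty\bigr)\bigl(\sum_{i\in I_{\bar h}}|u_i^{\mf{B}}|_4^2\bigr)\overline{\gamma}/S$ for $k\neq\bar h$ (where only the \emph{signed} entry is needed, so the competitive parts help), and an a priori bound on the components of the maximizer. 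Given such a bound $t_k^{\mf{B}}\le T$ with $T$ depending only on the admissible data, your closing inequality $S\le \bigl(\max_{\mathcal{K}_2}|\beta_{ij}^+|_\infty\bigr)T\overline{\gamma}/S$ does force the contradiction, and is in fact more direct than the paper's own Step 2. The genuine gap is precisely where you locate it: the a priori bound on $\mf{t}^{\mf{B}}$. Your proposed derivation, $\mf{M}_{hh}t_h^{\mf{B}}\le\|\mf{u}_h^{\mf{B}}\|_h^2+\sum_{k\neq h}|\mf{M}_{hk}|\,t_k^{\mf{B}}$ followed by the claim that ``the cross terms are small under the standing hypothesis,'' is wrong at that last clause. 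The standing hypothesis only makes the \emph{positive parts} of the cross--group couplings small; for $h\neq k$ the absolute entry $|\mf{M}_{hk}|$ is dominated by the competitive contributions $\int_\Omega\beta_{ij}^-(u_i^{\mf{B}}u_j^{\mf{B}})^2$, which Lemma \ref{lem: bound competition} controls only by $\bar M$ --- a constant of order $\max_{(i,j)}|\beta_{ij}^+|_\infty\,\overline{\gamma}^2/S^2$, hence possibly large. The resulting recursion $t_h^{\mf{B}}\le \overline{\gamma}/c+(\bar M(m-1)/c)\max_k t_k^{\mf{B}}$ (with $c$ a lower bound for $\mf{M}_{hh}$) does not close unless $\bar M(m-1)<c$, for which there is no reason; and bare membership in $\mathcal{E}_{\mf{B}}$ (strict diagonal dominance with no quantitative gap) does not bound the inverse of the principal submatrix.

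Two ways to fill the hole. The paper takes a completely different, variational route: if maximum points $\mf{t}^l$ of $\Psi_{\mf{B}^l}$ had $|\mf{t}^l|\to\infty$, then necessarily $\mf{M}^l\hat{\mf{t}}^l\cdot\hat{\mf{t}}^l\to 0$ along the unit directions $\hat{\mf{t}}^l=\mf{t}^l/|\mf{t}^l|$, whence $\Psi_l(\mf{t}^l)=\bigl(\sum_h\|\mf{u}_h^l\|_h^2\hat t_h^l\bigr)^2/(4\,\mf{M}^l\hat{\mf{t}}^l\cdot\hat{\mf{t}}^l)\to+\infty$; this contradicts the uniform upper bound $\Psi_l(\mf{t}^l)\le\liminf_n\Psi_{\mf{B}^l,\mf{u}_n^l}(\mf{t}^l)\le\liminf_n\Psi_{\mf{B}^l,\mf{u}_n^l}(\mf{1})=\liminf_nJ_{\mf{B}^l}(\mf{u}_n^l)\le\overline{\gamma}$, which uses weak lower semicontinuity together with the fact that $\mf{1}$ is the unique maximum of $\Psi_{\mf{B}^l,\mf{u}_n^l}$ because the minimizing sequence lies in $\mathcal{N}_{\mf{B}^l}\cap\mathcal{E}_{\mf{B}^l}$. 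Alternatively, your linear--algebra route can be repaired: combining \eqref{inequ Nehari limite} with $\|\mf{u}_h^{\mf{B}}\|_h^2\ge S\sum_{i\in I_h}|u_i^{\mf{B}}|_4^2$ one shows the \emph{quantitative} dominance gap $\mf{M}_{hh}-\sum_{k\neq h}|\mf{M}_{hk}|\ge\bigl(S-2K_2\overline{\gamma}/S\bigr)\underline{\gamma}$, which is bounded below by a positive constant of the admissible dependence once $K_2$ is fixed strictly below $S^2/(2\overline{\gamma})$; a Varah--type bound for strictly diagonally dominant matrices then gives $\|\mf{M}_P^{-1}\|_\infty\le \bigl((S-2K_2\overline{\gamma}/S)\underline{\gamma}\bigr)^{-1}$ and hence $\max_kt_k^{\mf{B}}\le T$. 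Either repair must be written out; as it stands, the central estimate of your argument is asserted rather than proved, and the stated justification for it is false.
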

\begin{proof}
We separate the proof in two steps. 

%\paragraph{Step 1)} \emph{for every $n$, the function $\Psi_{\mf{B}}^n$ has the unique maximum point $\mf{1}$ in $\overline{(\R_+)^m}$.}\\

\paragraph{Step 1)} \emph{There exists $C_1>0$, depending on $V_i,\beta_{ii}$ and $ \max_{(i,j) \in \mathcal{K}_1} |\beta_{ij}|_\infty$, such that if $\mf{B}$ satisfies \eqref{ass_small_coop2} and $\mf{t}^{\mf{B}}$ is a maximum point of $\Psi_{\mf{B}}$, then $|\mf{t}^{\mf{B}}| \le C_1$.} \\
By contradiction, assume that this is not true. Then there exist $(\mf{B}^l)_l$, with $\mf{B}^l$ satisfying the \eqref{ass_small_coop2} for every $l$, and $\max_{(i,j) \in \mathcal{K}_1} |\beta_{ij}^l|_\infty \le C$ independently on $l$, such that:
\begin{itemize}
%\item for every $l$ there exists $(\mf{u}^l_n)_n \subset \mathcal{N}_{\mf{B}^l} \cap \mathcal{E}_{\mf{B}^l}$ minimizing sequence for $c_{\mf{B}^l}$, 
\item for every $l$ there exists $\mf{u}^l \in \mathbb{H}$ which is the limit (weak in $\mathbb{H}$, strong in $(L^4(\Omega))^d$, a.e. in $\Omega$) of a minimizing sequence $(\mf{u}^l_n)_n$ for $c_{\mf{B}^l}$; 
%such that $\mf{u}^l_n \wc \mf{u}^l$ weakly in $\mathbb{H}$, $\mf{u}^l_n \to \mf{u}^l$ strongly in $(L^4(\Omega))^d$ and a.e. in $\Omega$, as $n \to \infty$,
\item for every $l$ there exists $\mf{t}^l \in \overline{(\R_+)^m}$, maximum point of $\Psi_l:=\Psi_{\mf{B}^l}$, such that $|\mf{t}^l| \to +\infty$ as $l \to \infty$.
\end{itemize}
Having chosen $\mf{B}^l$ such that the \eqref{ass_small_coop2} holds,  for every $l$ we have $\mf{u}^l  \in \mathcal{E}_{\mf{B}^l}$, and in particular $\mf{M}^l:= \mf{M}(\mf{B}^l,\mf{u}^l)$ is positive definite. Let $\hat{\mf{t}}^l := \mf{t}^l/ |\mf{t}^l|$; we claim that 
\begin{equation}\label{limite versori}
\lim_{l \to \infty} \mf{M}^l \hat{\mf{t}}^l \cdot \hat{\mf{t}}^l = 0.
\end{equation}
Indeed, if this is not true, then there exists $\alpha>0$ such that up to a subsequence $\mf{M}^l \hat{\mf{t}}^l \cdot \hat{\mf{t}}^l\ge \alpha$, and by $|\mf{t}^l| \to +\infty$ we have
\[
\Psi_l(\mf{t}^l) = \frac{1}{2}\sum_{h} \|\mf{u}_{h}^l\|_h^2  t_{h}^l- \frac{1}{4} \mf{M}^l \mf{t}^l \cdot \mf{t}^l  \le C \overline{\gamma} |\mf{t}^l| - \frac{\alpha}{4} |\mf{t}^l|^2 \to -\infty
\]
as $l \to \infty$, in contradiction with the fact that $\Psi_l(\mf{t}^l) \ge \Psi_l(\mf{0}) = 0$ for every $l$. This proves the \eqref{limite versori}. At this point, we observe that the maximality of $\mf{t}^l$ entails
\begin{equation}\label{eq7pag12}
\Psi_l(\mf{t}^l) = \sup_{\theta>0} \Psi_l(\theta \hat{\mf{t}}^l) 
= \sup_{\theta>0} \left[ \frac{1}{2} \sum_{h=1}^m \|\mf{u}_{h}^l\|_h^2\, \hat t_{h}^l \theta - \frac{1}{4} \mf{M}^l \hat{\mf{t}}^l \cdot \hat{\mf{t}}^l \theta^2 \right].
\end{equation}
As function of $\theta$, the term in the brackets is of type $a_l \theta - b_l \theta^2$, with $a_l, b_l >0$, and $b_l \to 0$ by the \eqref{limite versori}. Moreover, thanks to Lemma \ref{lem: lower bound forte} (recall that $\max_{(i,j) \in \mathcal{K}_1} |\beta_{ij}^l|_\infty$ can be bounded independently on $l$) and to the fact that $|\hat{\mf{t}}^l|=1$ and $\hat t_{h}^l \ge 0$ for every $h$, it results
\begin{equation}\label{lower bound versori}
\sum_{h=1}^m \|\mf{u}_{h}^l\|_h^2\,  \hat{t}_{h}^l \ge S \underline{\gamma} \sum_{h=1}^m \hat{t}_{h}^l \ge C S\underline{\gamma}>0.
\end{equation}
Now, it is immediate to check that $\sup_{\theta>0} \left[a_l \theta - b_l \theta^2\right] =a_l^2/(4 b_l)$, so that collecting together \eqref{limite versori}, \eqref{eq7pag12}, and \eqref{lower bound versori}, we deduce
\begin{equation}\label{limit1}
\Psi_l(\mf{t}^l) = \frac{\left(\sum_{h=1}^m \|\mf{u}_{h}^l\|_h^2 \,\hat{t}_{h}^l\right)^2  }{4 \mf{M}^l \hat{\mf{t}}^l \cdot \hat{\mf{t}}^l} \to +\infty \qquad \text{as $l \to \infty$}.
\end{equation}

To reach a contradiction, we wish to show that, on the contrary, $\Psi_l(\mf{t}^l)$ is bounded in $l$. At first we observe that, for every $i \in I_h$, $h=1,\dots,m$, and $l$, it results $t_{h}^l u_{i,n}^l \wc t_{h}^l u_{i}^l$ weakly in $H_0^1(\Omega)$ as $ n \to \infty$. Thus the weak lower semi-continuity of $J_{\mf{B}^l}$ implies that 
\begin{align*}
\Psi_l(\mf{t}^l) & = J_{\mf{B}^l}\left( \sqrt{t_{1}^l}\mf{u}_{1}^l,\dots, \sqrt{t_{m}^l}\mf{u}_{m}^l \right) \\
& \le \liminf_{n \to \infty} J_{\mf{B}^l}\left( \sqrt{t_{1}^l}\mf{u}_{1,n}^l,\dots, \sqrt{t_{m}^l}\mf{u}_{m,n}^l \right) = \liminf_{n \to \infty} \Psi_{\mf{B}^l, \mf{u}_n^l}(\mf{t}^l).
\end{align*}
Since $\mf{u}_n^l \in \mathcal{N}_{\mf{B}^l} \cap \mathcal{E}_{\mf{B}^l}$, $\mf{1}$ is the unique maximum point of $\Psi_{\mf{B}^l,\mf{u}_n^l}$ in $\overline{(\R_+)^m}$, and thanks to Lemma \ref{lem: upper bounds} we can bound the right hand side independently on $n$ and $l$:
\[
\Psi_l(\mf{t}^l)  \le \liminf_{n \to \infty}  \Psi_{\mf{B}^l, \mf{u}_n^l}(\mf{t}^l) \le \liminf_{n \to \infty} \Psi_{\mf{B}^l, \mf{u}_n^l}(\mf{1}) = \liminf_{n \to \infty}  J_{\mf{B}^l}(\mf{u}_n^l) \le \overline{\gamma},
\]
in contradiction with the \eqref{limit1}.

\paragraph{Step 2)} \emph{Conclusion of the proof.} \\
The function $\Psi_{\mf{B}}$ is of class $\mathcal{C}^1 \left( \overline{(\R_+)^m}\right)$, see the \eqref{exp Psi}. So, if $\mf{t}^{\mf{B}}$ is a maximum point of $\Psi_{\mf{B}}$, we have $\pa_h \Psi_{\mf{B}}(\mf{t}^{\mf{B}}) = 0$ if $t_h^{\mf{B}}>0$, and $\pa_h \Psi_{\mf{B}}(\mf{t}^{\mf{B}})  \le  0$ if $t_h^{\mf{B}}=0$; that is,
\begin{align}
\|\mf{u}_h^{\mf{B}}\|_h^2 = \sum_{k=1}^m \mf{M}(\mf{B},\mf{u}^{\mf{B}})_{hk} t_k^{\mf{B}} \qquad &\text{if $t_h^{\mf{B}}>0$} \label{derivata in R_+} \\
\|\mf{u}_h^{\mf{B}}\|_h^2 \le \sum_{k\neq h} \mf{M}(\mf{B},\mf{u}^{\mf{B}})_{hk} t_k^{\mf{B}}\qquad &\text{if $t_h^{\mf{B}}=0$} \label{derivata su bordo}. 
%\|\mf{u}_h^{\mf{B}}\|_h^2 = \sum_{k=1}^m \left( \sum_{(i,j) \in I_h \times I_k} \int_{\Omega} \beta_{ij}(x) \left(u_i^{\mf{B}} u_j^{\mf{B}}\right)^2 \right) t_k \qquad &\text{if $t_h>0$} \label{derivata in R_+} \\
%\sum_{i \in I_h} \|u_i^{\mf{B}}\|_i^2 \le  \sum_{k \neq h}\left( \sum_{(i,j) \in I_h \times I_k} \int_{\Omega} \beta_{ij}(x) \left(u_i^{\mf{B}} u_j^{\mf{B}}\right)^2 \right) t_k \qquad &\text{if $t_h=0$} \label{derivata su bordo} 
\end{align}

Note that if $\max_{(i,j) \in \mathcal{K}_2 }   |\beta_{ij}^+|_\infty    =0$, namely if $\beta_{ij} \le 0$ for every $(i,j) \in \mathcal{K}_2$, the estimate \eqref{derivata su bordo} immediately gives a contradiction with Lemma \ref{lem: lower bound forte}; so, from now on we can suppose that $\max_{(i,j) \in \mathcal{K}_2 }   |\beta_{ij}^+|_\infty $ is strictly positive.

Let us assume by contradiction that there exists $\bar h =1,\dots,m$ such that $\mf{t}^{\mf{B}}$ is a maximum point of $\Psi_{\mf{B}}$ in $\overline{(\R_+)^m}$ and $t^{\mf{B}}_{\bar h}=0$. By $\pa_{\bar h} \Psi_{\mf{B}} (\mf{t}^\mf{B}) \le 0$, we deduce 
\begin{align*}
S \sum_{i \in I_{\bar h}} |u_i^{\mf{B}}|_4^2 & \le \|\mf{u}_{\bar h}^{\mf{B}}\|_{\bar h}^2 \le \sum_{k\neq {\bar h}} \mf{M}(\mf{B},\mf{u}^{\mf{B}})_{\bar hk} t_k^{\mf{B}}\\
& = \left( \max_{(i,j) \in \mathcal{K}_2 }   |\beta_{ij}^+|_\infty\right)  \left(\sum_{i \in I_{\bar h} } |u_i^{\mf{B}}|_4^2\right) \sum_{k \neq \bar h} \left(\sum_{j \in I_k } |u_j^{\mf{B}}|_4^2\right) t_k^{\mf{B}} ,
\end{align*}
which gives
\[
\left( \max_{(i,j) \in \mathcal{K}_2 }   |\beta_{ij}^+|_\infty\right) \sum_{k \neq \bar h} \left(\sum_{j \in I_k } |u_j^{\mf{B}}|_4^2\right) t_k^{\mf{B}} \ge S.
\]
In particular, there exists $\bar k \in \{1,\dots,m\} \setminus \{\bar h\}$, and $\bar j \in I_{\bar k}$, such that 
\begin{equation}\label{eq11pag14}
\left(\max_{(i,j) \in \mathcal{K}_2} |\beta_{ij}^+|_\infty \right) |u_{\bar j}^{\mf{B}}|_4^2 t_{\bar k}^{\mf{B}} \ge \frac{S}{d(m-1)},
\end{equation}
%\begin{equation}\label{eq9pag14}
%\left(\max_{(i,j) \in \mathcal{K}_2 }   |\beta_{ij}^+|_\infty\right) \left(\sum_{j \in I_{\bar k} } |u_j^{\mf{B}}|_4^2\right) t_{\bar k}^{\mf{B}} \ge \frac{S}{m-1},
%\end{equation}
which implies $t^{\mf{B}}_{\bar k}>0$; hence $\pa_{\bar k} \Psi_{\mf{B}}(\mf{t}^{\mf{B}}) = 0$, i.e. by the \eqref{derivata in R_+}
\[
\mf{M}(\mf{B},\mf{u}^{\mf{B}})_{\bar k \bar k} t_{\bar k}^{\mf{B}} = \|\mf{u}_{\bar k}^{\mf{B}}\|_{\bar k}^2 - \sum_{h \neq \bar k} \mf{M}(\mf{B},\mf{u}^{\mf{B}})_{\bar k h} t_h^{\mf{B}} 
\]
% \le  \|\mf{u}_h^{\mf{B}}\|_h^2 - \sum_{h \neq \bar k} \left( \sum_{(i,j) \in I_{\bar k} \times I_h} \int_{\Omega} \beta_{ij}^-(x) \left(u_i^{\mf{B}} u_j^{\mf{B}}\right)^2 \right) t_{h}^{\mf{B}}.
%\end{multline*}
The right hand side can be estimated using the \eqref{inequ Nehari limite}:
%In light of the \eqref{inequ Nehari limite}, the right hand side can be estimated in the following way:
\begin{multline}\label{eq10pag15}
\mf{M}(\mf{B},\mf{u}^{\mf{B}})_{\bar k \bar k} t_{\bar k}^{\mf{B}} \le \sum_{h =1}^m \mf{M}(\mf{B},\mf{u}^{\mf{B}})_{\bar k h}- \sum_{h \neq \bar k}  \mf{M}(\mf{B},\mf{u}^{\mf{B}})_{\bar k h} t_h^{\mf{B}}\\
 \le  \sum_{h =1}^m  \sum_{(i,j) \in I_{\bar k} \times I_h} \int_{\Omega} \beta_{ij}^+ \left(u_i^{\mf{B}} u_j^{\mf{B}}\right)^2 
 + \sum_{h \neq \bar k}  \left( \sum_{(i,j) \in I_{\bar k} \times I_h} \int_{\Omega} \beta_{ij}^-\left(u_i^{\mf{B}} u_j^{\mf{B}}\right)^2 \right) t_{h}^{\mf{B}}\\
 \le \left( \max_{(i,j) \in \{1,\dots,d\}^2} |\beta_{ij}^+|_\infty \right) \frac{ \overline{\gamma}^2}{S^2} + C_1 \bar{M},
%\le \left( \max_{(i,j) \in \mathcal{K}_1} |\beta_{ij}|_\infty+ \max_{i} |\beta_{ii}|_\infty + \max_{(i,j) \in \mathcal{K}_2} |\beta_{ij}^+|_\infty \right) \frac{d^2 \overline{\gamma}^2}{S^2} + C_1 \overline{M},
%\le \left(C_0 + \left(\max_{h \neq k} \ \max_{(i,j) \in I_h \times I_k} |\beta_{ij}^+|_\infty\right) \right)\frac{d^2 \overline{\gamma}^2}{S^2} + C_1 \bar M
% \le  \sum_{h =1}^m  \sum_{(i,j) \in I_{\bar k} \times I_h} \int_{\Omega} \beta_{ij}^+(x) \left(u_i^{\mf{B}} u_j^{\mf{B}}\right)^2 
% + \sum_{h \neq \bar k}  \left( \sum_{(i,j) \in I_{\bar k} \times I_h} \int_{\Omega} \beta_{ij}^-(x) \left(u_i^{\mf{B}} u_j^{\mf{B}}\right)^2 \right) t_{h}^{\mf{B}}\\
% \le \left(C_1 + \max_{h} \ \max_{i \in I_h} \left\{ \sum_{j \in I \setminus I_h} |\beta_{ij}^+|_\infty \right\} \right)\frac{d^2 \overline{\gamma}^2}{S^2} + \bar C \bar M,
\end{multline}
where, in addition to usual arguments based on the H\"older inequality, we used Lemma \ref{lem: upper bounds} to bound the first term, and Lemma \ref{lem: bound competition} and the first step to bound the second term. Since $ \beta_{\bar j \bar j} \ge \mu_{\bar j} \ge \inf_j \mu_j =: C_2$ a.e. in $\Omega$, and $C_2>0$ depends only on $\beta_{ii}$, it results
\begin{multline*}
\frac{S}{ \displaystyle d(m-1) \max_{(i,j) \in \mathcal{K}_2} |\beta_{ij}^+|_\infty } \le |u_{\bar j}^{\mf{B}}|_4^2 t_{\bar k}^{\mf{B}} = \frac{|u_{\bar j}^{\mf{B}}|_4^2 \int_{\Omega} \beta_{\bar j \bar j}(x) \left(u_{\bar j}^\mf{B} \right)^4}{\int_{\Omega} \beta_{\bar j \bar j}(x) \left(u_{\bar j}^\mf{B} \right)^4} t_{\bar k}^{\mf{B}} \\
\le \frac{1}{\mu_{\bar j} |u_{\bar j}^{\mf{B}}|_4^2} \int_{\Omega} \beta_{\bar j \bar j}(x) \left(u_{\bar j}^\mf{B} \right)^4 t_{\bar k}^{\mf{B}} 
 \le \frac{d(m-1)}{S C_2  } \left(\max_{(i,j) \in \mathcal{K}_2} |\beta_{ij}^+|_\infty \right) \int_{\Omega} \beta_{\bar j \bar j}(x) \left(u_{\bar j}^\mf{B} \right)^4 \left(t_{\bar k}^{\mf{B}}\right)^2,
 \end{multline*} 
where in the firts and in the last inequality we used the \eqref{eq11pag14}. To continue the chain of inequalities, at first we note that the last integral can be estimated by $ \mf{M}(\mf{B},\mf{u}^{\mf{B}})_{\bar k \bar k}$; so, in a second time, we can apply the \eqref{eq10pag15} and the first step to obtain
%\[
%
%\]
%Now, by the \eqref{eq10pag15} and the fact that, as a consequence of Step 1), $t_{\bar k}^{\mf{B}} \le C_1$, we deduce
\begin{multline*}
\frac{S}{ \displaystyle d(m-1) \max_{(i,j) \in \mathcal{K}_2} |\beta_{ij}^+|_\infty }  \le
\frac{d(m-1) t_{\bar k}^{\mf{B}}}{S C_2  } \left(\max_{(i,j) \in \mathcal{K}_2} |\beta_{ij}^+|_\infty \right) \mf{M}(\mf{B},\mf{u}^{\mf{B}})_{\bar k \bar k} t_{\bar k}^{\mf{B}} \\
 \le \frac{d(m-1)C_1}{S C_2  } \left(\max_{(i,j) \in \mathcal{K}_2} |\beta_{ij}^+|_\infty \right) \left[\left( \max_{(i,j) \in \{1,\dots,d\}^2} |\beta_{ij}^+|_\infty \right) \frac{ \overline{\gamma}^2}{S^2} + C_1 \bar{M}\right].
%& \qquad \cdot \left( \left( \max_{(i,j) \in \mathcal{K}_1} |\beta_{ij}|_\infty+ \max_{i} |%\beta_{ii}|_\infty + \max_{(i,j) \in \mathcal{K}_2} |\beta_{ij}^+|_\infty \right) \frac{d^2 %\overline{\gamma}^2}{S^2} + C_1 \overline{M} \right).
\end{multline*}
which gives a contradiction provided $\max_{(i,j) \in \mathcal{K}_2} |\beta_{ij}^+|_\infty$ is smaller than a constant depending only on $V_i,\beta_{ii}$ and $\max_{(i,j) \in \mathcal{K}_1} |\beta_{ij}|_\infty$.
\end{proof} 

The previous result implies that $(\sqrt{t_1^{\mf{B}}}\mf{u}_1^{\mf{B}} , \dots, \sqrt{t_m^{\mf{B}}} \mf{u}_m^{\mf{B}} ) \in \mathcal{N}_{\mf{B}}$ for any $\mf{t}^{\mf{B}}$ which is a maximum point of $\Psi_{\mf{B}}$. Thus, if we show that $\mf{1}$ is the unique maximum point of $\Psi_{\mf{B}}$, we obtain $\mf{u}^{\mf{B}} \in \mathcal{N}_{\mf{B}}$. This will be the object of the forthcoming Lemma \ref{lem: limite in nehari}; in the proof of such a result, we shall compare the value $J_{\mf{B}}(  \sqrt{t_1^{\mf{B}}}\mf{u}_1^{\mf{B}} , \dots, \sqrt{t_m^{\mf{B}}} \mf{u}_m^{\mf{B}})$ with $c_{\mf{B}}$, the infimum of $J_{\mf{B}}$ on $\mathcal{N}_{\mf{B}} \cap \mathcal{E}_{\mf{B}}$. We point out that at this stage such a comparison is not allowed, because we do not know that $(\sqrt{t_1^{\mf{B}}}\mf{u}_1^{\mf{B}} , \dots, \sqrt{t_m^{\mf{B}}} \mf{u}_m^{\mf{B}})\in \mathcal{E}_{\mf{B}}$. 

\begin{lemma}\label{lem: max in cone}
There exists $0<\bar K \le K_3$, depending only on $V_i,\beta_{ii}$ and \\
$\max_{(i,j) \in \mathcal{K}_1} |\beta_{ij}|_\infty$, such that if $\max_{(i,j) \in \mathcal{K}_2}  |\beta_{ij}^+|_\infty\le \bar K$, then any maximum point $\mf{t}^{\mf{B}}$ of $\Psi_{\mf{B}}$ is such that
$\left(\sqrt{t_1^{\mf{B}}}\mf{u}_1^{\mf{B}} , \dots, \sqrt{t_m^{\mf{B}}} \mf{u}_m^{\mf{B}} \right)\in \mathcal{E}_{\mf{B}}$.
\end{lemma}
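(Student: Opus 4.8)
The plan is to reduce the assertion, for each $h$, to a scalar inequality, by combining the homogeneity of $\mathbf{M}(\mathbf{B},\cdot)$ under the rescaling $\mathbf{u}_h\mapsto\sqrt{t_h}\,\mathbf{u}_h$ with the Euler--Lagrange identity for $\Psi_{\mathbf{B}}$ at the maximum point. Set $\mathbf{v}^{\mathbf{B}}:=(\sqrt{t_1^{\mathbf{B}}}\mathbf{u}_1^{\mathbf{B}},\dots,\sqrt{t_m^{\mathbf{B}}}\mathbf{u}_m^{\mathbf{B}})$ and write $\mathbf{M}_{hk}:=\mathbf{M}(\mathbf{B},\mathbf{u}^{\mathbf{B}})_{hk}$. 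By \eqref{def matrice M} one has $\mathbf{M}(\mathbf{B},\mathbf{v}^{\mathbf{B}})_{hk}=t_h^{\mathbf{B}}t_k^{\mathbf{B}}\mathbf{M}_{hk}$; moreover by \eqref{h0}--\eqref{h1} the diagonal entries obey $\mathbf{M}_{hh}\ge(\min_i\mu_i)\sum_{i\in I_h}|u_i^{\mathbf{B}}|_4^4>0$, and by Lemma \ref{lem: max interno} every $t_h^{\mathbf{B}}$ is strictly positive. Hence, dividing the $h$-th diagonal-dominance inequality of $\mathcal{E}_{\mathbf{B}}$ by $t_h^{\mathbf{B}}>0$, the condition $\mathbf{v}^{\mathbf{B}}\in\mathcal{E}_{\mathbf{B}}$ is equivalent to
\[
t_h^{\mathbf{B}}\,\mathbf{M}_{hh}>\sum_{k\ne h}t_k^{\mathbf{B}}\,|\mathbf{M}_{hk}|\qquad\text{for every }h=1,\dots,m.
\]

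Now I insert the critical point equation. Since all components of $\mathbf{t}^{\mathbf{B}}$ are positive, $\mathbf{t}^{\mathbf{B}}$ is an interior critical point of $\Psi_{\mathbf{B}}$, so \eqref{derivata in R_+} holds for every $h$, i.e. $t_h^{\mathbf{B}}\mathbf{M}_{hh}=\|\mathbf{u}_h^{\mathbf{B}}\|_h^2-\sum_{k\ne h}\mathbf{M}_{hk}t_k^{\mathbf{B}}$. Substituting and using $x+|x|=2x^+$ with $x^+=\max\{x,0\}$, the condition $\mathbf{v}^{\mathbf{B}}\in\mathcal{E}_{\mathbf{B}}$ collapses to
\[
\|\mathbf{u}_h^{\mathbf{B}}\|_h^2>2\sum_{k\ne h}(\mathbf{M}_{hk})^+\,t_k^{\mathbf{B}}\qquad\text{for every }h=1,\dots,m.
\]
This is the heart of the matter: the (possibly very large, because of strong competition) negative parts of the off-diagonal entries cancel, and only the positive parts survive; and since $(i,j)\in I_h\times I_k\subset\mathcal{K}_2$ for $k\ne h$, one has $(\mathbf{M}_{hk})^+\le\sum_{(i,j)\in I_h\times I_k}\int_\Omega\beta_{ij}^+(u_i^{\mathbf{B}}u_j^{\mathbf{B}})^2\le\bigl(\max_{(i,j)\in\mathcal{K}_2}|\beta_{ij}^+|_\infty\bigr)\bigl(\sum_{i\in I_h}|u_i^{\mathbf{B}}|_4^2\bigr)\bigl(\sum_{j\in I_k}|u_j^{\mathbf{B}}|_4^2\bigr)$, which is controlled by the smallness parameter.

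It then remains to bound the two sides by quantities independent of the couplings in $\mathcal{K}_2$. On the left, $\|\mathbf{u}_h^{\mathbf{B}}\|_h^2\ge S\sum_{i\in I_h}|u_i^{\mathbf{B}}|_4^2\ge S\underline{\gamma}$ by Lemma \ref{lem: lower bound forte}, which holds for the weak limit $\mathbf{u}^{\mathbf{B}}$ thanks to the strong $L^4$-convergence; imposing $\bar K\le K_3\le K_2<S^2/(2\overline{\gamma})$ makes $\underline{\gamma}$ bounded below by $\underline{\gamma}_0:=S/\bigl(2d(\max_{(i,j)\in\mathcal{K}_1}|\beta_{ij}|_\infty+\max_i|\beta_{ii}|_\infty)\bigr)>0$, depending only on $V_i,\beta_{ii},\max_{(i,j)\in\mathcal{K}_1}|\beta_{ij}|_\infty$. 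On the right, Lemma \ref{lem: upper bounds} and Sobolev give $\sum_{i=1}^d|u_i^{\mathbf{B}}|_4^2\le\overline{\gamma}/S$, and Step 1 of Lemma \ref{lem: max interno} gives $t_k^{\mathbf{B}}\le C_1$ with $C_1$ independent of $\bar K$, so $2\sum_{k\ne h}(\mathbf{M}_{hk})^+t_k^{\mathbf{B}}\le 2C_1\overline{\gamma}^2\bar K/S^2$. Choosing
\[
\bar K:=\min\Bigl\{K_3,\ \frac{S^3\underline{\gamma}_0}{2C_1\overline{\gamma}^2}\Bigr\},
\]
which depends only on $V_i,\beta_{ii}$ and $\max_{(i,j)\in\mathcal{K}_1}|\beta_{ij}|_\infty$, yields $S\underline{\gamma}_0>2C_1\overline{\gamma}^2\bar K/S^2$ and hence the displayed inequality for every $h$, so $\mathbf{v}^{\mathbf{B}}\in\mathcal{E}_{\mathbf{B}}$. (For $m=1$ the statement is trivial since then $\mathbf{M}(\mathbf{B},\mathbf{v}^{\mathbf{B}})_{11}=(t_1^{\mathbf{B}})^2\mathbf{M}_{11}>0$.)

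The argument is essentially algebraic once the cancellation identity is in place, so I do not expect a genuine obstacle; the only care needed is to verify that all the constants $\overline{\gamma},\underline{\gamma}_0,C_1,K_3$ entering the final smallness threshold are independent of $\bar K$ and of the off-diagonal couplings $\beta_{ij}$, $(i,j)\in\mathcal{K}_2$, which is guaranteed by the explicit bookkeeping of Lemmas \ref{lem: upper bounds}--\ref{lem: max interno}. The conceptual point worth emphasizing is that, on the Nehari-type constraint, a strong competition between distinct groups \emph{reinforces} the diagonal dominance of $\mathbf{M}(\mathbf{B},\cdot)$, and only the positive parts of the cross-couplings — made small by $\bar K$ — can spoil it.
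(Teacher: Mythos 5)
Your argument is correct, and it is in fact a genuine streamlining of the paper's proof. The paper proceeds in two steps: Step 1 is a compactness-by-contradiction argument establishing a uniform \emph{lower} bound $t_h^{\mf{B}} \ge C_3>0$, which is then needed in Step 2 because the paper works with the un-divided diagonal-dominance inequality $\mf{M}(\mf{B},\mf{u}^{\mf{B}})_{hh}(t_h^{\mf{B}})^2 > \sum_{k\neq h}|\mf{M}(\mf{B},\mf{u}^{\mf{B}})_{hk}|t_h^{\mf{B}}t_k^{\mf{B}}$ and must pay for the extra factor $t_h^{\mf{B}}$ on the left via $C_4:=C_3 S\underline{\gamma}$. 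By dividing through by $t_h^{\mf{B}}>0$ \emph{before} substituting the stationarity identity \eqref{derivata in R_+}, you reduce the condition to $\|\mf{u}_h^{\mf{B}}\|_h^2 > 2\sum_{k\neq h}(\mf{M}_{hk})^+t_k^{\mf{B}}$, whose left-hand side is bounded below by $S\underline{\gamma}$ with no reference to $\mf{t}^{\mf{B}}$ at all; this makes the paper's Step 1 superfluous (and $C_3$ is not used anywhere else in the paper, so nothing is lost). The key cancellation $\mf{M}_{hk}+|\mf{M}_{hk}|=2(\mf{M}_{hk})^+$ and the remaining ingredients --- strict positivity of all $t_h^{\mf{B}}$ and the upper bound $t_k^{\mf{B}}\le C_1$ from Lemma \ref{lem: max interno}, the uniform bounds of Lemmas \ref{lem: upper bounds} and \ref{lem: lower bound forte} passed to the weak limit --- are exactly those of the paper's Step 2. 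Two cosmetic points: with $\bar K$ \emph{equal} to $S^3\underline{\gamma}_0/(2C_1\overline{\gamma}^2)$ your final display gives equality rather than strict inequality, so either take half that threshold or invoke the strict inequality $\underline{\gamma}>\underline{\gamma}_0$ (which does hold since $K_2<S^2/(2\overline{\gamma})$ strictly); and the claim ``the condition is equivalent to'' should be read as holding for each $h$ separately after dividing by the corresponding $t_h^{\mf{B}}$, which is what you clearly intend.
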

\begin{proof}
We assume from the beginning that $\max_{(i,j) \in \mathcal{K}_2}  |\beta_{ij}^+|_\infty$ is smaller than $K_3$ defined in Lemma \ref{lem: max interno}. Under this assumption, $t_h^{\mf{B}}>0$ for every $h$.

\paragraph{Step 1)} \emph{Assume that 
\begin{equation}\label{ass_small_coop5}
\max_{(i,j) \in \mathcal{K}_2}  |\beta_{ij}^+|_\infty < \min\left\{K_3, S^3 \underline{\gamma}/(2\overline{\gamma}^2 C_1)\right\},
\end{equation}
where $S$, $\underline{\gamma}$, $\overline{\gamma}$ and $C_1$ have been defined by \eqref{def of S}, in Lemma \ref{lem: lower bound forte}, in Lemma \ref{lem: upper bounds} and in Step 1) of Lemma \ref{lem: max interno}, respectively. There exists $C_3>0$, depending only on $V_i$, $\beta_{ii}$ and $\max_{\mathcal{K}_1} |\beta_{ij}|_\infty$, such that $t_h^{\mf{B}} \ge C_3$ for every $h=1,\dots, m$.}\\
Assume by contradiction that the claim is not true. Then, there exist $(\mf{B}^l)_l$ with $\mf{B}^l$ satisfying the \eqref{ass_small_coop5} for every $l$, and $\max_{(i,j) \in \mathcal{K}_1} |\beta_{ij}^l|_\infty \le C$ independently on $l$, such that:
\begin{itemize}
\item for every $l$ there exists $\mf{u}^l \in \mathbb{H}$, which is the limit of a minimizing sequence for $c_{\mf{B}^l}$;
%the functional $J_{\mf{B}_l}$ restricted on $ \mathcal{N}_{\mf{B}_l} \cap \mathcal{E}_{\mf{B}_l}$,
\item for every $l$ there exists $\mf{t}^l \in (\R_+)^m$, maximum point of $\Psi_l:=\Psi_{\mf{B}^l}$, such that $t_{\bar h}^l \to 0$ as $l \to \infty$ for some $\bar h \in \{1,\dots,m\}$.
\end{itemize}
As $\mf{B}^l$ satisfies the \eqref{ass_small_coop5}, by Lemma \ref{lem: max interno} we know that $t_{\bar h}^l>0$ for every $l$; by the \eqref{derivata in R_+}
\begin{align*}
S\underline{\gamma} &\le \sum_{i \in I_{\bar h}} \|u_{i}^l\|_i^2 = \sum_{k=1}^m \left(\sum_{(i,j) \in I_{\bar h} \times I_k} \int_{\Omega} \beta_{ij}^l(x) \left( u_{i}^l u_{j}^l\right)^2 \right) t_{k}^l \\
& \le \sum_{k=1}^m \left(\sum_{(i,j) \in I_{\bar h} \times I_k} |(\beta_{ij}^l)^+|_\infty |u_{i}^l|_4^2 |u_{j}^l|_4^2 \right) t_{k}^l \\
& \le \frac{\overline{\gamma}^2}{S^2}\left(\max_{(i,j) \in \mathcal{K}_1}  |\beta_{ij}^l|_\infty + \max_{i}  |\beta_{ii}|_\infty \right) t_{\bar h}^l + \frac{\overline{\gamma}^2}{S^2}\left( \max_{(i,j) \in \mathcal{K}_2}  |(\beta_{ij}^l)^+|_\infty\right) C_1,
\end{align*}
where the first inequality follows by Lemma \ref{lem: lower bound forte}, and the last one is a consequence of Lemma \ref{lem: upper bounds}, and of Step 1) of Lemma \ref{lem: max interno}. Now, passing to the limit as $l \to \infty$ in the previous chain of inequalities, since $t_{\bar h}^l \to 0$, the functions $\beta_{ii}$ are prescribed, and $\max_{\mathcal{K}_1} |\beta_{ij}^l|_\infty \le C$, we have
\[
S\underline{\gamma} \le \left(\max_{(i,j) \in \mathcal{K}_2}  |(\beta_{ij}^l)^+|_\infty\right) \frac{\overline{\gamma}^2}{S^2} C_1,
\]
which gives a contradiction with the \eqref{ass_small_coop5}.

\paragraph{Step 2)} \emph{There exists $\bar K< \min\{K_3, S^3 \underline{\gamma}/(2\overline{\gamma}^2 C_1)\}$ such that if $
\max_{(i,j) \in \mathcal{K}_2}  |\beta_{ij}^+|_\infty \le \bar K$, then $(\sqrt{t_1^{\mf{B}}} \mf{u}_1^{\mf{B}}, \dots, \sqrt{t_m^{\mf{B}}}  \mf{u}_m^{\mf{B}})  \in \mathcal{E}_{\mf{B}}$.} \\
Under the \eqref{ass_small_coop5}, by the first step not only $t_h^{\mf{B}}>0$, but also $t_h^{\mf{B}} \ge C_3 >0$ for every $h$. Hence, by the \eqref{derivata in R_+} and Lemma \ref{lem: lower bound forte}
\[
C_4:= C_3 S \underline{\gamma} \le t_h^{\mf{B}} \|\mf{u}_h^{\mf{B}}\|_h^2 = \sum_{k=1}^m \mf{M}(\mf{B},\mf{u}^{\mf{B}})_{hk} t_h^{\mf{B}} t_k^{\mf{B}},
\]
for every $h=1,\dots,m$. As a consequence
\[
  \mf{M}(\mf{B},\mf{u}^{\mf{B}})_{hh} \left(t_h^{\mf{B}} \right)^2 \ge C_4 - \sum_{k \neq h}  \mf{M}(\mf{B},\mf{u}^{\mf{B}})_{hk}  t_h^{\mf{B}} t_k^{\mf{B}} 
\]
for every $h=1,\dots,m$. We infer
\begin{align*}
%\mf{M}\left(\mf{B}, \left(\sqrt{t_1^{\mf{B}}} \mf{u}_1^{\mf{B}}, \dots, \sqrt{t_m^{\mf{B}}}  \mf{u}_m^{\mf{B}} \right) \right)_{hh}  \\-\sum_{k=1}^m \left|\mf{M}\left(\mf{B}, \left(\sqrt{t_1^{\mf{B}}} \mf{u}_1^{\mf{B}}, \dots, \sqrt{t_m^{\mf{B}}}  \mf{u}_m^{\mf{B}} \right) \right)_{hk}\right|\\
%= 
\mf{M}(\mf{B},\mf{u}^{\mf{B}})_{hh} \left(t_h^{\mf{B}}\right)^2 &-  \sum_{k \neq h} |\mf{M}(\mf{B},\mf{u}^{\mf{B}})_{hk}  t_h^{\mf{B}} t_k^{\mf{B}} | \\
&\ge \mf{M}(\mf{B},\mf{u}^{\mf{B}})_{hh} \left(t_h^{\mf{B}}\right)^2  
- \sum_{k \neq h} \left(\sum_{(i,j) \in I_h \times I_k} \int_{\Omega} \left(\beta_{ij}^++ \beta_{ij}^- \right) \left(u_i^{\mf{B}} u_j^{\mf{B}} \right)^2\right) t_h^{\mf{B}} t_k^{\mf{B}}  \\
& \ge C_4 - 2 \sum_{k \neq h} \left(\sum_{(i,j) \in I_h \times I_k} \int_{\Omega} \beta_{ij}^+ \left(u_i^{\mf{B}} u_j^{\mf{B}} \right)^2\right) t_h^{\mf{B}} t_k^{\mf{B}}  \\
&\ge C_4 - 2  \left(\frac{C_1\overline{\gamma}}{S}\right)^2    \left( \max_{(i,j) \in \mathcal{K}_2}  |\beta_{ij}^+|_\infty \right),
\end{align*}
for every $h=1,\dots,m$, and the last term is positive provided $\max_{(i,j) \in \mathcal{K}_2}  |\beta_{ij}^+|_\infty$ is sufficiently small. 
\end{proof}

In the next lemma we show that, under the previous assumptions, $\mf{u}^{\mf{B}} \in \mathcal{N}_{\mf{B}} \cap \mathcal{E}_{\mf{B}}$.

\begin{lemma}\label{lem: limite in nehari}
If $\max_{(i,j) \in \mathcal{K}_2}  |\beta_{ij}^+|_\infty \le \bar K$
defined in Lemma \ref{lem: max in cone}, then $\mf{t}^{\mf{B}}=\mf{1}$ is the unique maximum point of $\Psi_{\mf{B}}$, and in particular $\mf{u}^{\mf{B}} \in \mathcal{N}_{\mf{B}} \cap \mathcal{E}_{\mf{B}}$.
\end{lemma}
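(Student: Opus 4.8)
The plan is to assemble the three smallness lemmas already proved. Assume $\max_{(i,j)\in\mathcal K_2}|\beta_{ij}^+|_\infty\le\bar K$; since $\bar K\le K_3\le K_2$, Lemmas \ref{lem: limit in cone}, \ref{lem: max interno} and \ref{lem: max in cone} all apply. By Lemma \ref{lem: limit in cone}, $\mf u^{\mf B}\in\mathcal E_{\mf B}$, so $M:=\mf M(\mf B,\mf u^{\mf B})$ is symmetric positive definite and $\Psi_{\mf B}=\Psi_{\mf B,\mf u^{\mf B}}$ is a strictly concave quadratic polynomial in $\mf t$. Let $\mf t^{\mf B}$ be any maximum point of $\Psi_{\mf B}$ on $\overline{(\R_+)^m}$ (it exists by Lemma \ref{lem: max raggiunto}); by Lemma \ref{lem: max interno}, $t_h^{\mf B}>0$ for every $h$, so $\mf t^{\mf B}$ is an interior critical point of $\Psi_{\mf B}$ and hence $\mf v:=(\sqrt{t_1^{\mf B}}\mf u_1^{\mf B},\dots,\sqrt{t_m^{\mf B}}\mf u_m^{\mf B})\in\mathcal N_{\mf B}$ (as already noted after Lemma \ref{lem: max interno}), while by Lemma \ref{lem: max in cone} also $\mf v\in\mathcal E_{\mf B}$. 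Everything then reduces to proving $\mf t^{\mf B}=\mf 1$.

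First I would show $J_{\mf B}(\mf v)=c_{\mf B}$. Since $\mf v\in\mathcal N_{\mf B}\cap\mathcal E_{\mf B}$ we have $J_{\mf B}(\mf v)\ge c_{\mf B}$. For the reverse inequality, each $\mf u_n^{\mf B}\in\mathcal N_{\mf B}\cap\mathcal E_{\mf B}$, so by point 4) of Remark \ref{rem: geom constraint} the point $\mf 1$ maximizes $\Psi_{\mf B,\mf u_n^{\mf B}}$ on $\overline{(\R_+)^m}$; since $J_{\mf B}$ is weakly lower semicontinuous on $\mathbb H$ (its quartic part is weakly continuous by compactness of $H_0^1(\Omega)\hookrightarrow L^4(\Omega)$), applying this to the rescaled sequence $(\sqrt{t_1^{\mf B}}\mf u_{1,n}^{\mf B},\dots,\sqrt{t_m^{\mf B}}\mf u_{m,n}^{\mf B})\rightharpoonup\mf v$ gives
\[
J_{\mf B}(\mf v)=\Psi_{\mf B,\mf u^{\mf B}}(\mf t^{\mf B})\le\liminf_{n}\Psi_{\mf B,\mf u_n^{\mf B}}(\mf t^{\mf B})\le\liminf_{n}\Psi_{\mf B,\mf u_n^{\mf B}}(\mf 1)=\liminf_{n}J_{\mf B}(\mf u_n^{\mf B})=c_{\mf B},
\]
so $J_{\mf B}(\mf v)=c_{\mf B}$, i.e. $\mf v$ is a minimizer of $J_{\mf B}$ on $\mathcal N_{\mf B}\cap\mathcal E_{\mf B}$.

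The core is an algebraic rigidity argument. From \eqref{func on Nehari} for $\mf v\in\mathcal N_{\mf B}$, $\|\mf v_h\|_h^2=t_h^{\mf B}\|\mf u_h^{\mf B}\|_h^2$, and the criticality identity $\|\mf u_h^{\mf B}\|_h^2=(M\mf t^{\mf B})_h$ of \eqref{derivata in R_+}, one gets $4c_{\mf B}=4J_{\mf B}(\mf v)=\sum_h t_h^{\mf B}(M\mf t^{\mf B})_h=M\mf t^{\mf B}\cdot\mf t^{\mf B}$. On the other hand, \eqref{func on Nehari} for $\mf u_n^{\mf B}\in\mathcal N_{\mf B}$ gives $4J_{\mf B}(\mf u_n^{\mf B})=\mf M(\mf B,\mf u_n^{\mf B})\mf 1\cdot\mf 1$, and letting $n\to\infty$ (the map $\mf u\mapsto\mf M(\mf B,\mf u)$ is continuous for $L^4$ convergence) yields $4c_{\mf B}=\mf M(\mf B,\mf u^{\mf B})\mf 1\cdot\mf 1=M\mf 1\cdot\mf 1$. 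Hence $M\mf t^{\mf B}\cdot\mf t^{\mf B}=M\mf 1\cdot\mf 1$, which with $\mf w:=\mf t^{\mf B}-\mf 1$ and the symmetry of $M$ reads $2\,M\mf 1\cdot\mf w+M\mf w\cdot\mf w=0$. Moreover \eqref{inequ Nehari limite} together with $\|\mf u_h^{\mf B}\|_h^2=(M\mf t^{\mf B})_h$ gives $(M\mf w)_h=(M\mf t^{\mf B})_h-(M\mf 1)_h\le 0$ for every $h$, while $t_h^{\mf B}>0$; therefore $M\mf w\cdot\mf t^{\mf B}=\sum_h(M\mf w)_h t_h^{\mf B}\le 0$, that is $M\mf 1\cdot\mf w+M\mf w\cdot\mf w\le 0$. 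Substituting $M\mf 1\cdot\mf w=-\tfrac12 M\mf w\cdot\mf w$ into this inequality gives $\tfrac12 M\mf w\cdot\mf w\le 0$, and positive definiteness of $M$ forces $\mf w=\mf 0$, i.e. $\mf t^{\mf B}=\mf 1$.

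Consequently $\mf 1$ is a critical point of $\Psi_{\mf B,\mf u^{\mf B}}$, which means $\|\mf u_h^{\mf B}\|_h^2=\sum_k\mf M(\mf B,\mf u^{\mf B})_{hk}$ for every $h$; since the lower bounds in \eqref{estimates thm 1} pass to the limit and give $\|\mf u_h^{\mf B}\|_h^2>0$, this is exactly $\mf u^{\mf B}\in\mathcal N_{\mf B}$, and with $\mf u^{\mf B}\in\mathcal E_{\mf B}$ we conclude $\mf u^{\mf B}\in\mathcal N_{\mf B}\cap\mathcal E_{\mf B}$; as every maximum point of $\Psi_{\mf B}$ equals $\mf 1$, it is the unique one. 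The main obstacle is precisely this rigidity step: one must wring $\mf t^{\mf B}=\mf 1$ out of the ``loss-of-mass'' inequality \eqref{inequ Nehari limite}, the two energy identities, and the positivity of $\mf t^{\mf B}$ and of $M$; the delicate point is that $M$ need not be an M-matrix (its off-diagonal entries may be positive), so the componentwise inequality $M\mf w\le\mf 0$ alone does not give $\mf w\le\mf 0$, and the quadratic identity $M\mf t^{\mf B}\cdot\mf t^{\mf B}=M\mf 1\cdot\mf 1$ is genuinely needed.
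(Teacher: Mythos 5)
Your proposal is correct and follows essentially the same route as the paper: it rests on the same ingredients (the componentwise inequality \eqref{eq15pag15} coming from \eqref{derivata in R_+} and \eqref{inequ Nehari limite}, the identity $\tfrac14\mf{M}(\mf{B},\mf{u}^{\mf{B}})\mf{1}\cdot\mf{1}=c_{\mf{B}}$ from the $L^4$ convergence of the minimizing sequence, the membership of the rescaled function in $\mathcal{N}_{\mf{B}}\cap\mathcal{E}_{\mf{B}}$ via Lemmas \ref{lem: max interno} and \ref{lem: max in cone}, and the definiteness of $\mf{M}(\mf{B},\mf{u}^{\mf{B}})$). The only differences are cosmetic: your preliminary weak lower semicontinuity step establishing $J_{\mf{B}}(\mf{v})=c_{\mf{B}}$ is redundant (the chain of inequalities already closes without it), and you package the final rigidity as a quadratic-form argument ($M\mf{w}\cdot\mf{w}\le 0$ plus positive definiteness) where the paper collapses a chain of inequalities to get $M\mf{w}=\mf{0}$ and invokes nonsingularity.
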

\begin{proof}
In light of Lemma \ref{lem: max interno}, we know that \eqref{derivata in R_+} holds for every $h$. Comparing this with the \eqref{inequ Nehari limite}, we deduce that for every $h=1,\dots,m$.
\begin{equation}\label{eq15pag15}
\sum_{k=1}^m \mf{M}(\mf{B},\mf{u}^{\mf{B}})_{hk} \, t_k^{\mf{B}} \le \sum_{k=1}^m \mf{M}(\mf{B},\mf{u}^{\mf{B}})_{hk} 
\end{equation}
Now, since $\mf{u}^{\mf{B}}$ is the strong $(L^4(\Omega))^d$ limit of a minimizing sequence $(\mf{u}_n^{\mf{B}}) \subset \mathcal{N}_{\mf{B}} \cap \mathcal{E}_{\mf{B}}$, we have
\begin{equation}\label{eqpincopallino}
\frac{1}{4} \mf{M}(\mf{B},\mf{u}^{\mf{B}}) \mf{1}\cdot \mf{1}  = \lim_{ n \to \infty} \frac{1}{4} \mf{M}(\mf{B},\mf{u}_n^{\mf{B}}) \mf{1}\cdot \mf{1} = (\text{by \eqref{func on Nehari}}) = \lim_{n \to \infty} J_{\mf{B}}(\mf{u}^{\mf{B}}_n)= c_{\mf{B}}.
\end{equation}
%where it is convenient to recall the expression of the functional $J_{\mf{B}}$ on $\mathcal{N}_{\mf{B}}$, equation \eqref{func on Nehari}. 
By Lemmas \ref{lem: max interno} and \ref{lem: max in cone} we know that $(\sqrt{t_1^{\mf{B}}}\mf{u}_1^{\mf{B}},\dots, \sqrt{t_m^{\mf{B}}}\mf{u}_m^{\mf{B}})  \in \mathcal{N}_{\mf{B}} \cap \mathcal{E}_{\mf{B}}$. Thus
\begin{align*}
c_{\mf{B}} & \le J_{\mf{B}}\left(\sqrt{t_1^{\mf{B}}}\mf{u}_1^{\mf{B}},\dots, \sqrt{t_m^{\mf{B}}}\mf{u}_m^{\mf{B}}\right)  = \frac{1}{4} \mf{M}\left(\mf{B}, \left(\sqrt{t_1^{\mf{B}}}\mf{u}_1^{\mf{B}},\dots, \sqrt{t_m^{\mf{B}}}\mf{u}_m^{\mf{B}}\right)\right) \mf{1} \cdot \mf{1}\\
&= \frac{1}{4} \mf{M} (\mf{B},\mf{u}^{\mf{B}}) \mf{t}^{\mf{B}}\cdot \mf{t}^{\mf{B}} 
= \frac{1}{4}\sum_{h=1}^m \left[ \sum_{k=1}^m \mf{M} (\mf{B},\mf{u}^{\mf{B}})_{hk} \, t^{\mf{B}}_k \right] t_h^{\mf{B}} \le (\text{by \eqref{eq15pag15}}) \\
& \le \frac{1}{4}\sum_{h=1}^m \left[ \sum_{k=1}^m \mf{M} (\mf{B},\mf{u}^{\mf{B}})_{hk}\right] t_h^{\mf{B}}  = \frac{1}{4}\sum_{k=1}^m \left[ \sum_{h=1}^m \mf{M} (\mf{B},\mf{u}^{\mf{B}})_{hk} t_h^{\mf{B}} \right]  \\
&= \frac{1}{4}\sum_{k=1}^m \left[ \sum_{h=1}^m \mf{M} (\mf{B},\mf{u}^{\mf{B}})_{kh} t_h^{\mf{B}} \right] 
 \le (\text{by \eqref{eq15pag15}}) \le \frac{1}{4}\sum_{h,k=1}^m \mf{M} (\mf{B},\mf{u}^{\mf{B}})_{hk},
\end{align*} 
where we used again the \eqref{func on Nehari}, and also the symmetry of the matrix $\mf{M}(\mf{B},\mf{u}^{\mf{B}})$. A comparison with the \eqref{eqpincopallino} reveals that all the previous inequalities are, in fact, equalities, and in particular $=$ must hold in \eqref{eq15pag15}: $\mf{M}(\mf{B},\mf{u}^{\mf{B}}) \left( \mf{t}^{\mf{B}} -\mf{1} \right) = \mf{0}$.
%\[
%\sum_{k=1}^m\left( \sum_{(i,j) \in I_h \times I_k} \int_{\Omega} \beta_{ij}(x) \left(u_i^{\mf{B}} u_j^{\mf{B}}\right)^2 \right) \left(t_k^{\mf{B}}-1 \right) = 0 
%\]
%for every $h=1,\dots,m$. 
As $\mf{u}^{\mf{B}} \in \mathcal{E}_{\mf{B}}$, the matrix $\mf{M} (\mf{B},\mf{u}^{\mf{B}})$ is non-singular, and the unique solution is $t_k^{\mf{B}}=1$ for every $k$.
\end{proof}

We are finally ready to complete the proof of Theorem \ref{thm:main1}.

%We showed that, provided the cooperation between components of different ``groups" is sufficiently small, the variational problem $\inf_{ \mathcal{N}_{\mf{B}} \cap \mathcal{E}_{\mf{B}}} J_{\mf{B}}$ admits a minimizer $\mf{u}^{\mf{B}}$, and it is not restrictive to assume that such a minimizer has all nonnegative components. To complete the proof of Theorem \ref{thm:main1}, it remains to show that $\mf{u}^{\mf{B}}$ is a free critical point of $J_{\mf{B}}$, so that it solves \eqref{sysx}. This is the object of Proposition \ref{prop: natural constraint}.

\begin{proof}[Conclusion of the proof of Theorem \ref{thm:main1}]
By Lemmas \ref{lem: upper bounds}-\ref{lem: limite in nehari}, we know that there exists $\bar K>0$ such that if $\max_{(i,j) \in \mathcal{K}_2} |\beta_{ij}^+|_\infty \le \bar K$,
then there exists a minimizer $\mf{u}^{\mf{B}} \in \mathcal{N}_{\mf{B}} \cap \mathcal{E}_{\mf{B}}$ for $c_{\mf{B}}$, and it is possible to assume that $\mf{u}^{\mf{B}}$ has nonnegative components. Furthermore, for $\mf{u}^{\mf{B}}$ the estimates of points ($a$)-($c$) of Theorem \ref{thm:main1} are satisfied. Now, by Proposition \ref{prop: natural constraint}, $\mf{u}^{\mf{B}}$ is a free critical point of $J_{\mf{B}}$ in $\mathbb{H}$, and consequently is a solution of \eqref{sysx} with at least $m$ positive components. 
\end{proof}

We conclude this section with the proof of Proposition \ref{prop: constraint E omitted}.

\begin{proof}[Proof of Proposition \ref{prop: constraint E omitted}]
We show that under the assumption \eqref{relation pure competition}, the infimum $d_{\mf{B}}:= \inf_{\mathcal{N}_{\mf{B}}} J_{\mf{B}}$ is achieved, and that any minimizer belongs to $\mathcal{E}_{\mf{B}}$. Let $(\mf{u}_n^{\mf{B}})$ be a minimizing sequence for $d_{\mf{B}}$. As in Lemmas \ref{lem: upper bounds}, the sequence $(\mf{u}_n^{\mf{B}})$ is bounded in $\mathbb{H}$, and hence up to a subsequence it converges to some $\mf{u}^{\mf{B}}$ weakly in $\mathbb{H}$, strongly in $(L^4(\Omega))^d$, and a.e. in $\Omega$. Moreover, Lemma \ref{lem: lower bound forte} holds, \eqref{inequ Nehari limite} is satisfied and $J_{\mf{B}}(\mf{u}^{\mf{B}}) \le d_{\mf{B}}$. By the assumption \eqref{relation pure competition} and using Lemma \ref{lem: lower bound forte} and the inequality \eqref{inequ Nehari limite}, we immediately deduce that $\mf{u}^{\mf{B}} \in \mathcal{E}_{\mf{B}}$: indeed 
\[
0< S \underline{\gamma} \le \|\mf{u}_{h}^{\mf{B}}\|_h^2 \le \sum_{k=1}^m \mf{M}(\mf{B},\mf{u}^{\mf{B}})_{hk} = \mf{M}(\mf{B},\mf{u}^{\mf{B}})_{hh} - \sum_{k \neq h} |\mf{M}(\mf{B},\mf{u}^{\mf{B}})_{hk}|
\] 
for every $h$. As a consequence, there exists $\mf{t}^{\mf{B}} \in \overline{\mathbb{R}_+^m}$ which achieves the maximum of $\Psi_{\mf{B}}$, and, as observed at the beginning of Step 2) in Lemma \ref{lem: max interno}, thanks to \eqref{relation pure competition} it results that $t_h^{\mf{B}} >0$ for every $h$. At this point we can simply repeat step by step the proof of Lemma \ref{lem: limite in nehari}, with $c_{\mf{B}}$ replaced by $d_{\mf{B}}$ (this is possible because we know that $\mf{u}_{\mf{B}} \in \mathcal{E}_{\mf{B}}$, so that the matrix $\mf{M}(\mf{B},\mf{u}^{\mf{B}})$ is invertible), to deduce that $\mf{u}_{\mf{B}} \in \mathcal{N}_{\mf{B}}$, and hence it achieves $d_{\mf{B}}$. The fact that any minimizer for $d_{\mf{B}}$ belongs to $\mathcal{E}_{\mf{B}}$ is a trivial consequence of the definition of $\mathcal{N}_{\mf{B}}$ and of Lemma \ref{lem: lower bound forte}.
\end{proof}

\section{Phase separation for semi-trivial solutions}\label{sec: segregation}

In what follows, we consider the dependence of $\|\cdot\|_i$, $\|\cdot\|_h$, $J_{\mf{B}}$, $\mathcal{N}_{\mf{B}}$, \dots on $\mf{V}$. To emphasize it, we adopt the extended notation $\| \cdot\|_{V_i}$, $\|\cdot\|_{\mf{V}_h}$, $ J_{\mf{V},\mf{B}}$,$\mathcal{N}_{\mf{V},\mf{B}}$, \dots.

The setting we deal with is the following: let $\mf{a}$ be a $m$-decomposition of $d$, and let us consider sequences $(\mf{V}^n) \subset  (L^\infty(\Omega))^d$, $(\mf{B}^n) \subset (L^\infty(\Omega))^{d^2}$, with $\mf{B}^n$ symmetric for every $n$, such that assumptions \eqref{h0}, \eqref{h1} hold, and $\beta_{ij}^n \le 0$ a.e. in $\Omega$, for every $(i,j) \in \mathcal{K}_2$. Let $\mf{V}^\infty$ and $\mf{B}^\infty$ satisfying \eqref{h0} and \eqref{h1} as well (we use the notation $\mu_i^\infty$ to denote positive constant such that $\beta_{ii}^\infty \ge \mu_i^\infty$ a.e. in $\Omega$). We suppose that $\mf{V}^n \to \mf{V}^\infty$ in $(L^\infty(\Omega))^d$ and $\beta_{ij}^n \to \beta_{ij}^\infty$ in $L^\infty(\Omega)$ for every $(i,j) \in \mathcal{K}_1 \cup \{(i,i):i=1,\dots,d\}$, and the \eqref{ass segr} holds: $\beta_{ij}^n \to -\infty$ in $L^\infty(\Omega)$ for every $(i,j) \in \mathcal{K}_2$,  as $n \to \infty$. By Theorem \ref{thm:main1}, for every $n$ there exists $\bar K_n$ such that, if $\max_{(i,j) \in \mathcal{K}_2} |(\beta_{ij}^n)^+|_\infty < \bar K_n$, then there exists a minimizer for $J_{\mf{V}^n,\mf{B}^n}$ on $\mathcal{N}_{\mf{V}^n,\mf{B}^n} \cap \mathcal{E}_{\mf{B}^n}$. In light of the fact that $\beta_{ij}^n \le 0$ whenever $(i,j) \in \mathcal{K}_2$, this assumption is satisfied for every $n$, and we obtain a sequence $(\mf{u}^n)$ of minimizers which are semi-trivial solutions of \eqref{sysx} with potential $\mf{V}^n$ and coupling matrix $\mf{B}^n$. Moreover, the estimate \eqref{estimates thm 1} is satisfied. We let
\[
c_n:= J_{\mf{V}^n,\mf{B}^n}(\mf{u}^n) = \inf_{\mf{u} \in \mathcal{N}_{\mf{V}^n,\mf{B}^n} \cap \mathcal{E}_{\mf{B}^n}} J_{\mf{V}^n,\mf{B}^n}(\mf{u}).
\]

Now, let us recall the definitions of the matrix $\mf{M}_\infty(\mf{u})$, of the functional $J_{\infty}$, and of the Nehari-type manifold $\mathcal{N}_{\infty}$, see \eqref{def Minfty}-\eqref{Nehari limite}. In particular, we observe that $\mf{u} \in \mathcal{N}_\infty$ if $\|\mf{u}\|_{\mf{V}_h^\infty}>0$ and
\begin{equation}\label{caratterizzazione nehari limite}
\|\mf{u}_h\|_{\mf{V}_h^\infty}^2 = \mf{M}_\infty(\mf{u})_{hh}
\end{equation}
for every $h=1,\dots,m$. The functional on $\mathcal{N}_\infty$ reads
\begin{equation}\label{func on Nehari limite}
J_\infty(\mf{u}) = \frac{1}{4} \sum_{h=1}^m \mf{M}_\infty(\mf{u})_{hh}  = \frac{1}{4} \sum_{i=1}^d \|u_i\|_{V_i^\infty}^2 > 0 \qquad \forall \mf{u} \in \mathcal{N}_\infty.
\end{equation}

We set 
\[
c_\infty:= \inf \left\{ J_\infty(\mf{u}) \left| \begin{array}{l}
\mf{u} \in \mathcal{N}_\infty, \text{ and } \int_{\Omega} u_i^2 u_j^2 = 0 \\
\text{for every $(i,j) \in \mathcal{K}_2$}
\end{array} \right. \right\}.
\]

\begin{lemma}\label{lem: esiste minimo limite}
There exists $\bar{\mf{u}} \in \mathcal{N}_\infty$, such that $\int_{\Omega}\left( \bar u_{i} \bar u_{j}\right)^2 =0$ for every $(i,j) \in \mathcal{K}_2$, which achieves $c_\infty$.
\end{lemma}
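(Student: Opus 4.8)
The plan is to apply the direct method of the calculus of variations to the constrained minimization problem defining $c_\infty$, exploiting the strong $L^4$-compactness of bounded sequences in $\mathbb{H}$ together with a projection onto $\mathcal{N}_\infty$, which is available here precisely because $\mf{M}_\infty(\mf{u})$ is a diagonal matrix, so that the Nehari condition decouples across the groups.

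First I would pick a minimizing sequence $(\mf{u}^k)$ for $c_\infty$, so that each $\mf{u}^k\in\mathcal{N}_\infty$ satisfies $\int_\Omega (u_i^k)^2(u_j^k)^2=0$ for every $(i,j)\in\mathcal{K}_2$. By \eqref{func on Nehari limite}, $\frac{1}{4}\sum_{i=1}^d\|u_i^k\|_{V_i^\infty}^2=J_\infty(\mf{u}^k)\to c_\infty$, hence $(\mf{u}^k)$ is bounded in $\mathbb{H}$; up to a subsequence $\mf{u}^k\wc\bar{\mf{u}}$ weakly in $\mathbb{H}$, strongly in $(L^4(\Omega))^d$, and a.e.\ in $\Omega$. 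Since $u_i^k\to\bar u_i$ in $L^4(\Omega)$ for each $i$, H\"older's inequality gives $u_i^k u_j^k\to\bar u_i\bar u_j$ in $L^2(\Omega)$, so $\int_\Omega(\bar u_i\bar u_j)^2=\lim_k\int_\Omega(u_i^k u_j^k)^2=0$ for every $(i,j)\in\mathcal{K}_2$; i.e.\ the segregation constraint passes to the limit.

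Next I would rule out degeneration of the groups. Arguing as in Lemma \ref{lem: lower bound forte}, for any $\mf{w}\in\mathcal{N}_\infty$ and any $h$ one has $S\sum_{i\in I_h}|w_i|_4^2\le\|\mf{w}_h\|_{\mf{V}_h^\infty}^2=\mf{M}_\infty(\mf{w})_{hh}\le\big(\max_{(i,j)\in I_h^2}|\beta_{ij}^\infty|_\infty\big)\big(\sum_{i\in I_h}|w_i|_4^2\big)^2$, and since $\|\mf{w}_h\|_{\mf{V}_h^\infty}^2\neq0$ this yields $\sum_{i\in I_h}|w_i|_4^2\ge\underline{\gamma}^\infty>0$ with $\underline{\gamma}^\infty$ depending only on the $\beta_{ij}^\infty$; applying this to each $\mf{u}^k$ and passing to the $L^4$-limit gives $\sum_{i\in I_h}|\bar u_i|_4^2\ge\underline{\gamma}^\infty>0$ for every $h$. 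In particular some component of $\bar{\mf{u}}_h$ is nontrivial, so by \eqref{h1} $\mf{M}_\infty(\bar{\mf{u}})_{hh}\ge\sum_{i\in I_h}\mu_i^\infty|\bar u_i|_4^4>0$ and $\|\bar{\mf{u}}_h\|_{\mf{V}_h^\infty}^2>0$. On the other hand, weak lower semicontinuity of the norm together with the strong $L^4$-convergence of the (purely cooperative) diagonal term gives, for every $h$, $\|\bar{\mf{u}}_h\|_{\mf{V}_h^\infty}^2\le\liminf_k\|\mf{u}_h^k\|_{\mf{V}_h^\infty}^2=\lim_k\mf{M}_\infty(\mf{u}^k)_{hh}=\mf{M}_\infty(\bar{\mf{u}})_{hh}$.

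Finally I would project $\bar{\mf{u}}$ onto $\mathcal{N}_\infty$ group by group. I set $t_h:=\|\bar{\mf{u}}_h\|_{\mf{V}_h^\infty}^2/\mf{M}_\infty(\bar{\mf{u}})_{hh}\in(0,1]$ and $\bar{\mf{v}}:=\big(\sqrt{t_1}\,\bar{\mf{u}}_1,\dots,\sqrt{t_m}\,\bar{\mf{u}}_m\big)$. Since $\mf{M}_\infty$ is diagonal, for each $h$ we get $\|\bar{\mf{v}}_h\|_{\mf{V}_h^\infty}^2=t_h\|\bar{\mf{u}}_h\|_{\mf{V}_h^\infty}^2=t_h^2\,\mf{M}_\infty(\bar{\mf{u}})_{hh}=\mf{M}_\infty(\bar{\mf{v}})_{hh}$, i.e.\ $\bar{\mf{v}}\in\mathcal{N}_\infty$; moreover each group is rescaled by a positive constant, so $\int_\Omega(\bar v_i\bar v_j)^2=t_h t_k\int_\Omega(\bar u_i\bar u_j)^2=0$ for $(i,j)\in I_h\times I_k$ with $h\neq k$, i.e.\ $\bar{\mf{v}}$ is admissible for the problem defining $c_\infty$. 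Using \eqref{func on Nehari limite}, $t_h\le1$, and weak lower semicontinuity of $\sum_i\|\cdot\|_{V_i^\infty}^2$,
\[
J_\infty(\bar{\mf{v}})=\frac{1}{4}\sum_{h=1}^m t_h^2\,\mf{M}_\infty(\bar{\mf{u}})_{hh}\le\frac{1}{4}\sum_{h=1}^m t_h\,\mf{M}_\infty(\bar{\mf{u}})_{hh}=\frac{1}{4}\sum_{i=1}^d\|\bar u_i\|_{V_i^\infty}^2\le\liminf_{k\to\infty}J_\infty(\mf{u}^k)=c_\infty,
\]
while $J_\infty(\bar{\mf{v}})\ge c_\infty$ by admissibility, so $\bar{\mf{v}}$ achieves $c_\infty$, proving the lemma. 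The only genuinely delicate point is the non-degeneracy of each group in the limit (so that the $t_h$ are well defined and positive); the rest is the standard direct-method argument, made to work by the compact embedding $H_0^1(\Omega)\hookrightarrow L^4(\Omega)$ and the diagonal structure of $\mf{M}_\infty$.
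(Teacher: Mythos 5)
Your proof is correct and follows essentially the same route as the paper: weak limit of a minimizing sequence with strong $L^4$-convergence, a Lemma \ref{lem: lower bound forte}-type lower bound to keep each group non-degenerate, and the rescaling $t_h=\|\bar{\mf{u}}_h\|_{\mf{V}_h^\infty}^2/\mf{M}_\infty(\bar{\mf{u}})_{hh}\in(0,1]$ combined with weak lower semicontinuity. The only cosmetic difference is that the paper pushes the final chain of inequalities one step further to conclude $t_h=1$, so that the weak limit itself (rather than its projection onto $\mathcal{N}_\infty$) achieves $c_\infty$, which is immaterial for the statement.
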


\begin{proof}
By \eqref{func on Nehari limite} and the fact the constraint is not empty (it is possible to argue as in point 3) of Remark \ref{rem: geom constraint}), the minimization problem makes sense. Let $\tilde{\mf{u}} \in \mathcal{N}_\infty$ be such that $\int_{\Omega} \tilde{u}_{i}^2 \tilde{u}_{j}^2 =0$ for every $(i,j) \in \mathcal{K}_2$. Then $c_\infty \le J_\infty(\tilde{\mf{u}})$ and, as a consequence, any minimizing sequence $(\bar u_n)$ is bounded, and UTS it converges to some $\bar{\mf{u}}\in \mathbb{H}$ weakly in $\mathbb{H}$, strongly in $(L^4(\Omega))^d$, a.e. in $\Omega$, as $n \to \infty$. By the convergence, $\int_{\Omega}\left( \bar{u}_{i} \bar{u}_{j}\right)^2 =0$ for every $(i,j) \in \mathcal{K}_2$, and by \eqref{caratterizzazione nehari limite}
\begin{equation}\label{2101}
\|\bar{\mf{u}}\|_{\mf{V}^\infty_h}^2 \le \mf{M}_\infty(\bar{\mf{u}})_{hh}
\end{equation}
for every $h=1,\dots,m$. It is not difficult to modify the proof of Lemma \ref{lem: lower bound forte}, showing that there exist $C>0$ such that $\sum_{i \in I_h} |\bar{u}_i|_4^2 \ge C$ for every $h$. Hence, we can define $\bar{t}_h:=\|\bar{\mf{u}}_h\|_{\mf{V}_h^{\infty}}^2 / \mf{M}_\infty(\bar{\mf{u}})_{hh} >0$, so that by the \eqref{2101}
\[
\mf{M}_\infty(\bar{\mf{u}})_{hh} (\bar{t}_h-1) \le 0 \quad \Longrightarrow \quad \bar{t}_h \le 1 \qquad \text{for every $h=1,\dots,m$}.
\]
Moreover, by definition $( \sqrt{\bar{t}_1} \bar{\mf{u}}_1,\dots, \sqrt{ \bar{t}_m} \bar{\mf{u}}_m) \in \mathcal{N}_\infty$, and, clearly,
\[
\int_{\Omega} \left( \sqrt{\bar{t}_h} \bar{u}_{i} \sqrt{\bar{t}_h} \bar{u}_{j}\right)^2 =0 \quad \text{for every $(i,j) \in \mathcal{K}_2$}.
\]
Therefore, $4c_\infty \le  \sum_{h=1}^m \|\bar{\mf{u}}_h\|_{\mf{V}_h^\infty}^2 \bar{t}_h$; by weak lower semi-continuity, this implies
\[
\sum_{h=1}^m \|\bar{\mf{u}}_h\|_{\mf{V}_h^\infty}^2 \le \liminf_{n \to \infty} \sum_{h=1}^m \|\bar{\mf{u}}_{h,n}\|_{\mf{V}_h^\infty}^2 =\liminf_{n \to \infty} 4 J_{\infty}(\bar{\mf{u}}_n) = 4 c_\infty \le \sum_{h=1}^m \|\bar{\mf{u}}_h\|_{\mf{V}_h^\infty}^2 \bar{t}_h.
\]
As $\bar{t}_h \le 1$  for every $h$, necessarily $\bar{t}_h=1$ for every $h$, that is, $\bar{\mf{u}} \in \mathcal{N}_\infty$. In light of the weak lower semi-continuity of $J_\infty$, the thesis follows.
\end{proof}

The previous lemma can be used to relate the values of $c_n$ and $c_\infty$.

\begin{lemma}\label{rem: c_n e c_infty}
It results $\displaystyle \limsup_{n \to \infty} c_n \le c_\infty$.
\end{lemma}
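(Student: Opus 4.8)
\emph{The plan.} The idea is to take the limiting minimizer $\bar{\mf{u}} \in \mathcal{N}_\infty$ supplied by Lemma~\ref{lem: esiste minimo limite} and use it, after a group-wise rescaling, as a test function for the problems defining the $c_n$. The key structural fact is that $\bar{\mf{u}}$ is already segregated between groups, $\bar u_i \bar u_j = 0$ a.e.\ in $\Omega$ for every $(i,j) \in \mathcal{K}_2$, and therefore, for \emph{every} $n$, the coupling matrix $\mf{M}(\mf{B}^n,\bar{\mf{u}})$ is diagonal: for $h \ne k$ we have $\mf{M}(\mf{B}^n,\bar{\mf{u}})_{hk} = \sum_{(i,j) \in I_h \times I_k}\int_\Omega \beta_{ij}^n \bar u_i^2 \bar u_j^2 = 0$, while by \eqref{h1} and $\beta_{ii}^n \ge \mu_i > 0$,
\[
\mf{M}(\mf{B}^n,\bar{\mf{u}})_{hh} = \sum_{(i,j) \in I_h^2}\int_\Omega \beta_{ij}^n \bar u_i^2 \bar u_j^2 \ge \sum_{i \in I_h}\mu_i |\bar u_i|_4^4 > 0,
\]
the strict positivity holding because $\|\bar{\mf{u}}_h\|_{\mf{V}_h^\infty} \ne 0$, i.e.\ at least one $\bar u_i$ with $i \in I_h$ is not identically zero. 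A diagonal matrix with positive diagonal is strictly diagonally dominant, so $\bar{\mf{u}} \in \mathcal{E}_{\mf{B}^n}$ for every $n$.

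Next I rescale inside each group. Since $\mf{M}(\mf{B}^n,\bar{\mf{u}})$ is diagonal, the Nehari system decouples, and setting $t_h^n := \|\bar{\mf{u}}_h\|_{\mf{V}_h^n}^2/\mf{M}(\mf{B}^n,\bar{\mf{u}})_{hh} > 0$ for $h=1,\dots,m$, the point $\mf{v}^n := (\sqrt{t_1^n}\,\bar{\mf{u}}_1,\dots,\sqrt{t_m^n}\,\bar{\mf{u}}_m)$ satisfies $G_{\mf{B}^n,h}(\mf{v}^n)=0$ for every $h$, i.e.\ $\mf{v}^n \in \mathcal{N}_{\mf{V}^n,\mf{B}^n}$ (equivalently, $\mf{1}$ is the corresponding critical point of $\Psi_{\mf{B}^n,\bar{\mf{u}}}$ in the sense of point~3) of Remark~\ref{rem: geom constraint}, with potentials $\mf{V}^n$). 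Multiplying each component by a positive constant does not affect the vanishing of the cross products, so $\mf{M}(\mf{B}^n,\mf{v}^n)$ is again diagonal with positive diagonal, whence $\mf{v}^n \in \mathcal{E}_{\mf{B}^n}$ as well. Thus $\mf{v}^n$ is admissible for $c_n$, and by \eqref{func on Nehari},
\[
c_n \le J_{\mf{V}^n,\mf{B}^n}(\mf{v}^n) = \frac14 \sum_{h=1}^m t_h^n \|\bar{\mf{u}}_h\|_{\mf{V}_h^n}^2 = \frac14 \sum_{h=1}^m \frac{\big(\|\bar{\mf{u}}_h\|_{\mf{V}_h^n}^2\big)^2}{\mf{M}(\mf{B}^n,\bar{\mf{u}})_{hh}}.
\]

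Finally I let $n \to \infty$. Since $\bar{\mf{u}}$ is fixed and $\mf{V}^n \to \mf{V}^\infty$ in $(L^\infty(\Omega))^d$, one gets $\|\bar{\mf{u}}_h\|_{\mf{V}_h^n}^2 \to \|\bar{\mf{u}}_h\|_{\mf{V}_h^\infty}^2$; and since $\beta_{ij}^n \to \beta_{ij}^\infty$ in $L^\infty(\Omega)$ for every $(i,j) \in \mathcal{K}_1 \cup \{(i,i):i=1,\dots,d\}$, which contains $I_h^2$, one gets $\mf{M}(\mf{B}^n,\bar{\mf{u}})_{hh} \to \mf{M}_\infty(\bar{\mf{u}})_{hh} = \|\bar{\mf{u}}_h\|_{\mf{V}_h^\infty}^2 > 0$, the last equality because $\bar{\mf{u}} \in \mathcal{N}_\infty$ (see \eqref{caratterizzazione nehari limite}). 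Hence the denominators stay bounded away from $0$, and passing to the $\limsup$,
\[
\limsup_{n\to\infty} c_n \le \frac14 \sum_{h=1}^m \frac{\big(\|\bar{\mf{u}}_h\|_{\mf{V}_h^\infty}^2\big)^2}{\mf{M}_\infty(\bar{\mf{u}})_{hh}} = \frac14 \sum_{h=1}^m \|\bar{\mf{u}}_h\|_{\mf{V}_h^\infty}^2 = J_\infty(\bar{\mf{u}}) = c_\infty
\]
by \eqref{func on Nehari limite} and the choice of $\bar{\mf{u}}$. There is no real obstacle here: the only point needing attention is that the rescaled competitor $\mf{v}^n$ genuinely lands in $\mathcal{N}_{\mf{V}^n,\mf{B}^n}\cap\mathcal{E}_{\mf{B}^n}$ for each $n$, and this is exactly what the diagonal structure forced by the segregation of $\bar{\mf{u}}$ guarantees — in particular uniformly in $n$, and regardless of how strongly negative the cross-group couplings $\beta_{ij}^n$ become.
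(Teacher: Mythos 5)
Your proof is correct and follows essentially the same route as the paper: you take the segregated minimizer $\bar{\mf{u}}$ from Lemma~\ref{lem: esiste minimo limite}, rescale group-wise with exactly the same $t_h^n= \|\bar{\mf{u}}_h\|_{\mf{V}_h^n}^2/\mf{M}(\mf{B}^n,\bar{\mf{u}})_{hh}$, use the resulting competitor in $\mathcal{N}_{\mf{V}^n,\mf{B}^n}\cap\mathcal{E}_{\mf{B}^n}$, and pass to the limit. The only difference is that you spell out the diagonality of $\mf{M}(\mf{B}^n,\bar{\mf{u}})$ and the membership checks that the paper leaves implicit.
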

\begin{proof}
Let $\bar{\mf{u}}$ be defined in the Lemma \ref{lem: esiste minimo limite}. For $h=1,\dots,m$ and $n \in \N$, we let $t_h^n:= \|\bar{\mf{u}}_h\|_{\mf{V}_h^n}^2/\mf{M}(\mf{B}^n,\bar{\mf{u}})_{hh}>0$. As $\bar u_i \bar u_j \equiv 0$ for every $(i,j) \in \mathcal{K}_2$, it results $( \sqrt{t_1^n} \bar{\mf{u}}_1,\dots, \sqrt{ t_m^n} \bar{\mf{u}}_m)$ $\in \mathcal{N}_{\mf{V}^n,\mf{B}^n} \cap \mathcal{E}_{\mf{B}^n}$ for every $n$. Since $\mf{V}^n \to \mf{V}^\infty$, $\beta_{ij}^n \to \beta_{ij}^\infty$ for every $(i,j) \in \mathcal{K}_1\cup \{(i,i)\}$ uniformly in $\Omega$, and being $\bar{\mf{u}} \in \mathcal{N}_\infty$, it results $t_h^n \to 1$ as $n \to \infty$, for every $h$. Therefore, 
\[
\limsup_{n \to \infty} c_n \le \limsup_{ n \to \infty} J_{\mf{V}^n,\mf{B}^n} \left( \sqrt{t_1^n} \bar{\mf{u}}_1,\dots, \sqrt{ t_m^n} \bar{\mf{u}}_m\right)  = J_\infty(\bar{\mf{u}})=c_\infty. \qedhere
\]
\end{proof}

Now, let us analyse the behaviour of the sequence $(\mf{u}^n)$, composed by minimizers for $c_n$. Since $(c_n) \subset \R_+$ is bounded, up to a subsequence there exists $\lim_n c_n= \lim_n \|\mf{u}_n\|_{\mf{V}^n}^2/4$. As observed in the \eqref{stima ainfty}-\eqref{stima cinfty}, the estimates \eqref{stime al limite} in Theorem \ref{thm:main1} can be made uniform in $n$: indeed
\begin{align}
&\sum_{i=1}^d \int_{\Omega} |\nabla u_i^n|^2 \le \sum_{i=1}^d \|u_i^n\|_{V_i^n}^2 \le 2\bar C \frac{\left( 1+ 2\max_i |V_i^\infty|_\infty\right)^2}{\min_i \mu_i^\infty} =: \overline{\gamma}^\infty   \label{stima ainfty}\\
&\sum_{i\in I_h} |u_i^n|_4^2 \ge \frac{S}{2d \left(\max_{(i,j) \in \mathcal{K}_1} |(\beta_{ij}^\infty)^+|_\infty + \max_{i} |\beta_{ii}^\infty|_\infty\right)}=: \underline{\gamma}_\infty \label{stima binfty}\\
& \sum_{i,j=1}^d \int_{\Omega} (\beta_{ij}^n)^- (u_i^n u_j^n)^2 \le  2  \left( \max_{(i,j) \in \mathcal{K}_1} |(\beta_{ij}^\infty)^+|_\infty  + \max_{i} |\beta_{ii}^\infty|_\infty\right) \left(\frac{\overline{\gamma}^\infty}{S}\right)^2 =: \bar{M}^\infty, \label{stima cinfty}
\end{align}

 By the \eqref{stima ainfty}-\eqref{stima binfty}, up to a subsequence $\mf{u}^n \wc \mf{u}^\infty$ in $\mathbb{H}$, $\mf{u}^n \to \mf{u}^\infty$ strongly in $(L^\infty(\Omega))^d$, $\mf{u}^n \to \mf{u}^\infty$ a.e. in $\Omega$, and $\mf{u}^\infty$ is such that $\|\mf{u}^\infty_h\|_{\mf{V}^\infty_h}^2>0$ for every $h$. Moreover, in light of assumption \eqref{ass segr} and the \eqref{stima cinfty}, for every $(i,j) \in \mathcal{K}_2$
\begin{equation}\label{limite segregato}
\int_{\Omega} \left(u^\infty_i u_j^\infty\right)^2 = \lim_{n \to \infty} \int_{\Omega} \left(u_i^n u_j^n\right)^2 = 0 \quad \Longrightarrow \quad u_i^\infty u_j^\infty \equiv 0 \quad \text{a.e. in $\Omega$}. 
\end{equation}
We wish to show that $\mf{u}^\infty \in \mathcal{N}_\infty$, and is a minimizer for $c_\infty$.

%By the uniform estimate \eqref{stima ainfty}, we deduce that there exists $\mf{u}^\infty \in \mathbb{H}$ such that, up to a subsequence, $\mf{u}^n \wc \mf{u}^{\infty}$ weakly in $\mathbb{H}$, $\mf{u}^n \to \mf{u}^{\infty}$ in $(L^4(\Omega))^d$, $\mf{u}^n \to \mf{u}^{\infty}$ a.e. in $\Omega$, as $n \to \infty$. Moreover, by the \eqref{stima binfty} and the \eqref{stima cinfty}, we deduce that $\sum_{i \in I_h} |u_i^{\infty}|_4^2 \ge \underline{\gamma}^\infty$ for every $h=1,\dots,m$,
%and
%
%\int_{\Omega} \left(u_i^{\infty} u_j^{\infty}\right)^2 = 0 \quad \text{a.e. in $\Omega$, for every $(i,j) \in I_h \times I_k$ with $h \neq k$}.
%\end{equation}

\begin{lemma}
The function $\mf{u}^\infty$ belongs to $\mathcal{N}_\infty$.
\end{lemma}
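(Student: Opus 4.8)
The plan is to show that $\mf{u}^\infty$ satisfies the defining equations of $\mathcal{N}_\infty$, namely that $\|\mf{u}^\infty_h\|_{\mf{V}_h^\infty}^2 \neq 0$ and $\sum_{i \in I_h}\pa_i J_\infty(\mf{u}^\infty)u_i^\infty = 0$ for every $h$, equivalently (by \eqref{caratterizzazione nehari limite}) that $\|\mf{u}^\infty_h\|_{\mf{V}_h^\infty}^2 = \mf{M}_\infty(\mf{u}^\infty)_{hh}$. The nontriviality $\|\mf{u}^\infty_h\|_{\mf{V}_h^\infty}^2 > 0$ has already been recorded above (it follows from the lower bound \eqref{stima binfty}, which survives the passage to the limit). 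So the heart of the matter is to prove the Nehari-type equality for each group $h$.

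First I would establish the inequality "$\le$". Recall $\mf{u}^n \in \mathcal{N}_{\mf{V}^n,\mf{B}^n}$, so $\|\mf{u}_h^n\|_{\mf{V}^n_h}^2 = \sum_{k=1}^m \mf{M}(\mf{B}^n,\mf{u}^n)_{hk}$. Split the right-hand side into the diagonal term $\mf{M}(\mf{B}^n,\mf{u}^n)_{hh}$ and the off-diagonal terms. The off-diagonal terms involve $\beta_{ij}^n$ with $(i,j) \in \mathcal{K}_2$, which are $\le 0$; hence $\sum_{k \neq h}\mf{M}(\mf{B}^n,\mf{u}^n)_{hk} \le 0$, giving $\|\mf{u}_h^n\|_{\mf{V}^n_h}^2 \le \mf{M}(\mf{B}^n,\mf{u}^n)_{hh}$. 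Now pass to the limit: the left side is weakly lower semicontinuous and $\mf{V}^n \to \mf{V}^\infty$ in $L^\infty$, so $\|\mf{u}^\infty_h\|_{\mf{V}^\infty_h}^2 \le \liminf_n \|\mf{u}_h^n\|_{\mf{V}^n_h}^2$; the right side converges since $\beta_{ii}^n \to \beta_{ii}^\infty$ and $\beta_{ij}^n \to \beta_{ij}^\infty$ in $L^\infty$ for $(i,j) \in \mathcal{K}_1$ while $\mf{u}^n \to \mf{u}^\infty$ strongly in $(L^4(\Omega))^d$. This yields $\|\mf{u}^\infty_h\|_{\mf{V}^\infty_h}^2 \le \mf{M}_\infty(\mf{u}^\infty)_{hh}$ for every $h$, which is \eqref{2101}-type inequality.

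For the reverse inequality — which I expect to be the main obstacle — the natural route is an energy comparison, exactly as in the proof of Lemma \ref{lem: esiste minimo limite}. Since $\mf{u}^\infty$ satisfies $\int_\Omega (u_i^\infty u_j^\infty)^2 = 0$ for every $(i,j)\in\mathcal{K}_2$ by \eqref{limite segregato}, and (using the "$\le$" just proved together with the uniform lower bound on $\sum_{i\in I_h}|u^\infty_i|_4^2$) one can define $t_h^\infty := \|\mf{u}^\infty_h\|_{\mf{V}^\infty_h}^2/\mf{M}_\infty(\mf{u}^\infty)_{hh} \in (0,1]$; then $(\sqrt{t_1^\infty}\mf{u}^\infty_1,\dots,\sqrt{t_m^\infty}\mf{u}^\infty_m)$ lies in $\mathcal{N}_\infty$ and is admissible for the problem defining $c_\infty$, so $4c_\infty \le \sum_h \|\mf{u}^\infty_h\|_{\mf{V}^\infty_h}^2\, t_h^\infty \le \sum_h \|\mf{u}^\infty_h\|_{\mf{V}^\infty_h}^2$. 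On the other hand, by weak lower semicontinuity of the norms and the identity \eqref{func on Nehari} applied on $\mathcal{N}_{\mf{V}^n,\mf{B}^n}$, together with Lemma \ref{rem: c_n e c_infty},
\[
\sum_{h=1}^m \|\mf{u}^\infty_h\|_{\mf{V}^\infty_h}^2 \le \liminf_{n\to\infty}\sum_{h=1}^m \|\mf{u}^n_h\|_{\mf{V}^n_h}^2 = \liminf_{n\to\infty} 4 c_n \le 4 c_\infty .
\]
Chaining these two displays forces equality throughout; in particular $\sum_h \|\mf{u}^\infty_h\|_{\mf{V}^\infty_h}^2 t_h^\infty = \sum_h \|\mf{u}^\infty_h\|_{\mf{V}^\infty_h}^2$ with each $t_h^\infty \le 1$ and each $\|\mf{u}^\infty_h\|_{\mf{V}^\infty_h}^2 > 0$, whence $t_h^\infty = 1$ for every $h$, i.e.\ $\|\mf{u}^\infty_h\|_{\mf{V}^\infty_h}^2 = \mf{M}_\infty(\mf{u}^\infty)_{hh}$. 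This is precisely $\mf{u}^\infty \in \mathcal{N}_\infty$. The delicate point is making sure the off-diagonal competitive terms $\mf{M}(\mf{B}^n,\mf{u}^n)_{hk}$ ($k\neq h$) genuinely drop out in the limit — they do not individually, since $\beta_{ij}^n \to -\infty$, but the product $\int_\Omega \beta_{ij}^n (u_i^n u_j^n)^2$ stays bounded (indeed $\to 0$ by \eqref{stima cinfty} and \eqref{ass segr}) — but for the inequality "$\le$" we only used their sign, so no control beyond $\le 0$ is needed there, and for the energy comparison we never touch them because we work with $J_\infty$ and the segregated limit. I would also double-check that the strong $(L^4(\Omega))^d$ convergence suffices to pass to the limit in $\mf{M}(\mf{B}^n,\mf{u}^n)_{hh}$, which is immediate since $\beta_{ii}^n \to \beta_{ii}^\infty$ uniformly and $(u_i^n)^2(u_j^n)^2 \to (u_i^\infty)^2(u_j^\infty)^2$ in $L^1$.
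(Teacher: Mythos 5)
Your proposal is correct and follows essentially the same route as the paper: establish $\|\mf{u}^\infty_h\|_{\mf{V}_h^\infty}^2 \le \mf{M}_\infty(\mf{u}^\infty)_{hh}$ from the sign of the $\mathcal{K}_2$ couplings, rescale by $t_h^\infty=\|\mf{u}^\infty_h\|_{\mf{V}_h^\infty}^2/\mf{M}_\infty(\mf{u}^\infty)_{hh}\le 1$ to produce an admissible competitor for $c_\infty$, and close the chain of inequalities via weak lower semicontinuity, \eqref{func on Nehari} and Lemma \ref{rem: c_n e c_infty} to force $t_h^\infty=1$. The only cosmetic difference is that the paper packages the rescaling through the auxiliary function $\Psi_\infty$, which you omit without loss.
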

\begin{proof}
We introduce the auxiliary function $\Psi_\infty:\overline{(\R_+)^m} \to \R$
\begin{align*}
\Psi_\infty(\mf{t})& := J_\infty\left( \sqrt{t_1} \mf{u}_1^\infty, \dots, \sqrt{t_m} \mf{u}_m^\infty \right)  = \frac{1}{2} \sum_{h=1}^m \|\mf{u}^\infty\|_{\mf{V}^\infty_h}^2 t_h - \frac{1}{4}  \sum_{h=1}^m  \mf{M}_\infty(\mf{u}^\infty)_{hh} t_h^2.
\end{align*}
It is then clear that $\mf{t}^\infty>0$ defined by
\begin{equation}\label{eqA.4}
\|\mf{u}^\infty\|_{\mf{V}^\infty_h}^2 - \mf{M}_\infty(\mf{u}^\infty)_{hh} t_h^\infty  = 0
\end{equation}
is the unique critical point of $\Psi_\infty$ in $(\R_+)^m$, and that, being $\mf{M}_\infty(\mf{u}^\infty)$ positive definite, $\mf{t}^\infty$ is a strict maximum. Clearly $( \sqrt{t_1^\infty} \mf{u}_1^\infty, \dots, \sqrt{t_m^\infty} \mf{u}_m^\infty) \in \mathcal{N}_\infty$, so we aim at proving that $t_h^\infty=1$ for every $h$. Since $\mf{u}^n \in \mathcal{N}_{\mf{V}^n,\mf{B}^n}$, for $h=1,\dots,m$ we have
\[
\|\mf{u}_h^n\|_{\mf{V}^n_h}^2 = \sum_{k=1}^m \mf{M}(\mf{B}^n,\mf{u}^n)_{hk} \quad \Longrightarrow \quad \|\mf{u}_h^\infty\|_h^2  - \mf{M}_\infty(\mf{u}^\infty)_{hh} \le \limsup_{n \to \infty} \sum_{k \neq h} \mf{M}(\mf{B}^n,\mf{u}^n)_{hk} \le 0,
\]
where the last inequality follows by the fact that $\beta_{ij}^n \le 0$ a.e. in $\Omega$ for every $(i,j) \in \mathcal{K}_2$. A comparison with the \eqref{eqA.4} reveals that $t_h^\infty \le 1$ for every $h=1,\dots,m$. Now, as $( \sqrt{t_1^\infty} \mf{u}_1^\infty, \dots, \sqrt{t_m^\infty} \mf{u}_m^\infty) \in \mathcal{N}_\infty$ and by the \eqref{func on Nehari limite} and \eqref{limite segregato}, we have $4 c_\infty \le \sum_h \|\mf{u}_h^\infty\|_{\mf{V}^\infty_h}^2 t_h^\infty$. Therefore, the variational characterization of $\mf{u}^n$, the \eqref{func on Nehari}, and Lemma \ref{rem: c_n e c_infty} give
\[
\sum_{h=1}^m \|\mf{u}_h^\infty\|_{\mf{V}^\infty_h}^2 \le \lim_{n \to \infty} \sum_{h=1}^m \|\mf{u}_h^n\|_{\mf{V}^n_h}^2 = \lim_{n \to \infty} 4 c_n \le 4 c_\infty   \le \sum_{h=1}^m \|\mf{u}_h^\infty\|_{\mf{V}^\infty_h}^2 t_h^\infty.
\]
As $t_h^\infty \le 1$ for every $h$, we deduce that necessarily $t_h^\infty=1$ for every $h$, that is, $\mf{u}^\infty \in \mathcal{N}_\infty$. 
\end{proof}

\begin{proof}[Conclusion of the proof of Theorem \ref{thm:main3}]
We start showing that the convergence of $\mf{u}^n$ to $\mf{u}^\infty$ is strong in $\mathbb{H}$. As $\mf{u}^\infty \in \mathcal{N}_\infty$ and $\mf{u}^n \in \mathcal{N}_{\mf{B}^n}$,
\begin{equation}\label{eqpagA6}
\begin{split}
\|\mf{u}_h^\infty\|_{\mf{V}^\infty_h}^2 &= \mf{M}_\infty(\mf{u}^\infty)_{hh} = \lim_{n \to \infty} \mf{M}(\mf{B}^n,\mf{u}^n)_{hh} \\
& \ge \lim_{n \to \infty}  \sum_{k=1}^m \mf{M}(\mf{B}^n,\mf{u}^n)_{hk} = \lim_{n \to \infty} \|\mf{u}_h^n\|_{\mf{V}^n_h}^2,
\end{split}
\end{equation}
where we used the fact that $\beta_{ij}^n \le 0$ a.e. in $\Omega$ for every $(i,j) \in \mathcal{K}_2$. On the other hand, by the convergence of $\mf{u}^n$ to $\mf{u}$ and of $\mf{V}^n$ to $\mf{V}^\infty$, also the opposite inequality holds, so that
\begin{equation}\label{eq2pagA6}
\|\mf{u}_h^\infty\|_{\mf{V}^\infty_h}^2= \lim_{n \to \infty} \|\mf{u}_h^n\|_{\mf{V}^n_h}^2\qquad h=1,\dots,m.
\end{equation}
This and the weak $(H_0^1(\Omega))^d$ convergence $\mf{u}^n \wc \mf{u}^\infty$ imply that $\mf{u}^n \to \mf{u}^\infty$ strongly in $\mathbb{H}$. Now, thanks to the \eqref{eq2pagA6}, the inequality \eqref{eqpagA6} is an equality, and in particular
\[
\lim_{n \to \infty}  \sum_{k \neq h} \mf{M}(\mf{B}^n,\mf{u}^n)_{hk}  =0 \quad h=1,\dots,m \quad \Longrightarrow \quad \lim_{n \to \infty} \sum_{(i,j) \in \mathcal{K}_2} \int_{\Omega} \left(\beta_{ij}^n\right)^- \left(u_i^n u_j^n\right)^2 = 0.
\]
This and the \eqref{eq2pagA6} permit to infer
\[
c_\infty \le J_\infty(\mf{u}^\infty) = \lim_{n \to \infty} J_{\mf{B}^n}(\mf{u}^n) = \lim_{n \to \infty} c_n,
\]
and by Lemma \ref{rem: c_n e c_infty} we conclude that $J_\infty(\mf{u}^\infty) = c_\infty = \lim_n c_n$. 
\end{proof}

\section{Positive solutions for systems with strong competition and strong cooperation}\label{sec: strong}

This section is devoted to the proofs of Theorems \ref{thm:main4} and \ref{thm:theorem 6}. In both the situations we study the constrained second differential of $J_{\mf{B}}$ on $\mathcal{N}_{\mf{B}}$.

\begin{remark}\label{rem: constrained differential}
Assume that $\tilde{\mf{u}} \in \mathcal{N}_{\mf{B}}$ is a free critical point of $J_{\mf{B}}$ in the whole $\mathbb{H}$, so that $d J_{\mf{B}}(\tilde{\mf{u}})=0$ in $\mathbb{H}^*$ (the dual space of $\mathbb{H}$), and let $\gamma: I \subset \R \to \mathcal{N}_{\mf{B}}$ be any smooth curve with support on the constraint $\mathcal{N}_{\mf{B}}$ and such that $\gamma(0) = \tilde{\mf{u}}$. It results
\[
\left.\frac{d^2 J_{\mf{B}}(\gamma(t))}{d t^2}\right|_{t=0} = \left.\left( d^2 J_{\mf{B}}(\gamma(t))[\gamma'(t),\gamma'(t)] + d J_{\mf{B}}(\gamma(t))\gamma''(t) \right)\right|_{t=0} = d^2 J_{\mf{B}}(\tilde{\mf{u}})[\gamma'(0),\gamma'(0)].
\]
Since $\mathcal{N}_{\mf{B}}$ is a $\mathcal{C}^2$ manifold, any tangent vector can be represented by a $\mathcal{C}^2$ curve, and the previous computation reveals that \emph{the constrained second differential of $J_{\mf{B}}$ on $\mathcal{N}_{\mf{B}}$, evaluated in a free critical point $\tilde{\mf{u}}$ of $J_{\mf{B}}$ which belongs to $\mathcal{N}_{\mf{B}}$, is equal, as quadratic operator on $T_{\tilde{\mf{u}}} \mathcal{N}_{\mf{B}}$, to the free second differential of $J_{\mf{B}}$ in $\tilde{\mf{u}}$}: 
%%\begin{equation}\label{constrained second differential}
\[
D^2_{\mathcal{N}_{\mf{B}}} J_{\mf{B}}(\tilde{\mf{u}}) \left[ \mf{v},\mf{v}\right]= D^2 J_{\mf{B}}(\tilde{\mf{u}}) \left[ \mf{v},\mf{v}\right] \qquad \forall \mf{v} \in T_{\tilde{\mf{u}}} \mathcal{N}_{\mf{B}}^2.
\]
%%\end{equation}
\end{remark}

\begin{proof}[Proof of Theorem \ref{thm:main4}]
Assume by contradiction that the statement is not true. Then there exist $(\mf{V}^n) \subset (L^\infty(\Omega))^d$ and $(\mf{B}^n) \subset  (L^\infty(\Omega))^{d^2}$ such that $V_i^n \to \tilde V_h$ and $\beta_{ij}^n \to \tilde \beta_h$ in $L^\infty(\Omega)$ for every $i \in I_h$ and $(i,j) \in I_h^2$ respectively, for every $h$; $\beta_{ii}^n= \beta_{ii}$ for every $i$,
%
%
%$|V_i^n-\tilde V_h|_\infty \to 0$ for $i \in I_h$, for every $h$, $\beta_{ii}^n= \beta_{ii}$ for every $i$, $|\beta_{ij}^n - \tilde \beta_h|_\infty \to 0$ for every $(i,j) \in \mathcal{K}_1$, 
$\beta_{ij}^n \to -\infty$ in $L^\infty(\Omega)$ for every $(i,j) \in \mathcal{K}_2$, and the \eqref{th variazionale} is not satisfied by $\mf{V}^n$, $\mf{B}^n$, that is,
\begin{multline}\label{absurd assumtpion}
c_n:= \inf\left\{J_{\mf{V}^n,\mf{B}^n}(\mf{u}): \mf{u} \in \mathcal{N}_{\mf{V}^n,\mf{B}^n} \cap \mathcal{E}_{\mf{B}^n} \right\} \\ \ge
\inf\left\{J_{\mf{V}^n,\mf{B}^n}(\mf{u}) \left| \begin{array}{l}
\mf{u} \in \mathcal{N}_{\mf{V}^n,\mf{B}^n} \cap \mathcal{E}_{\mf{B}^n} 
\text{ and there exists} \\
 \text{$i =1,\dots,d$ such that $u_i \equiv 0$}
\end{array} \right.\right\}.
\end{multline}
Now, note that in the present setting by Theorem \ref{thm:main1} there exists $\mf{u}^n \in \mathcal{N}_{\mf{V}^n,\mf{B}^n} \cap \mathcal{E}_{\mf{B}^n}$ which achieves $c_n$, and it is a free critical point of $J_{\mf{V}^n,\mf{B}^n}$ in $\mathbb{H}$. In light of the \eqref{absurd assumtpion}, we can assume that for every $n$ there exists $i_n$ such that $u_{i_n}^n \equiv 0$ in $\Omega$. Since $\{1,\dots,m\}$ is discrete and finite, it is not restrictive to assume that $i_n = \bar i$ for every $n$. As $\mf{u}^n$ is a free critical point of $J_{\mf{V}^n,\mf{B}^n}$, recalling what we observed in Remark \ref{rem: constrained differential} we have
%by the minimality of $\mf{u}^n$ in an open subset of $\mathcal{N}_{\mf{V}^n,\mf{B}^n}$, and by what we observed in Remark \ref{rem: constrained differential}, it results
\begin{equation}\label{diff secondo magg 0}
\begin{split}
0 &\le D^2_{\mathcal{N}_{\mf{B}^n}} J_{\mf{V}^n,\mf{B}^n} (\mf{u}^n)[\mf{v}^n,\mf{v}^n] = D^2 J_{\mf{V}^n,\mf{B}^n} (\mf{u}^n) [\mf{v}^n,\mf{v}^n] \\
& = \sum_{i =1}^d \|v_i^n\|_{V_i^n}^2- \sum_{i,j=1}^d \int_{\Omega} \beta_{ij}^n \left(u_i^n v_j^n\right)^2 - 2\sum_{i,j=1}^d \int_{\Omega} \beta_{ij}^n u_i^n u_j^n v_i^n v_j^n,
\end{split}
\end{equation}
for every $\mf{v}^n \in T_{\mf{u}^n} \mathcal{N}_{\mf{B}^n}$ and for every $n$. Let $\bar h$ be such that $\bar i \in I_{\bar h}$. By \eqref{stima binfty}, there exist $\underline{\gamma}$ and $\bar j \in I_{\bar h}$ such that $|u_{\bar j}^n|_4^2 \ge \underline{\gamma}/d$ for every $n$. By \eqref{diff constraint}, it is easy to check that if $\mf{v}^n$ is defined by
\[
v^n_i:= \begin{cases}
0 & \text{if $i \neq \bar i$} \\
u^n_{\bar j} & \text{if $i = \bar i$},
\end{cases}
%(0,\dots,0,\underbrace{u_{\bar j}^n}_{\text{$i$-th entry}},0,\dots,0) \in T_{\mf{u}^n} \mathcal{N}_{\mf{B}^n} \qquad \forall n.
\] 
it results $\langle \nabla G_{\mf{B}^n,h}(\mf{u}^n),\mf{v}^n \rangle = 0$ for every $h$ and $n$, and as a consequence $\mf{v}^n \in T_{\mf{u}^n} \mathcal{N}_{\mf{V}^n,\mf{B}^n}$ for every $n$. Thus by \eqref{diff secondo magg 0} we have
\[
0 \le  \|u_{\bar j}^n\|_{V_{\bar i}^n}^2 - \sum_{k \neq \bar i} \int_{\Omega} \beta_{\bar i k}^n (u_{\bar j}^n  u_k^n )^2,
\]
for every $n$. Passing to the limit as $n \to \infty$, since $\beta_{ij}^n \to -\infty$ in $L^\infty(\Omega)$ for every $(i,j) \in \mathcal{K}_2$ we can apply Theorem \ref{thm:main3}: it implies that $\beta_{\bar i k}^n (u_k^n u_{\bar j}^n )^2 \to 0$ in $L^1(\Omega)$ as $n \to \infty$ for every $k \not \in I_{\bar h}$, so that
\begin{equation}
\begin{split}\label{assurdo al limite}
0 &\le \lim_{n \to \infty} \left( \|u_{\bar j}^n\|_{V_{\bar i}^n}^2 - \sum_{k \in I_{\bar h} \setminus \{\bar i\}} \int_{\Omega} \beta_{\bar i k}^n (u_{\bar j}^n  u_k^n )^2 - \sum_{k \in \{1,\dots,d\} \setminus I_{\bar h}} \int_{\Omega} \beta_{\bar i k}^n (u_{\bar j}^n  u_k^n )^2 \right) \\
& = \|u_{\bar j}^\infty\|_{\tilde V_{\bar h}}^2 - \sum_{k \in I_{\bar h} \setminus \{\bar i\}} \int_{\Omega} \tilde \beta_{\bar h} (u_{\bar j}^\infty  u_k^\infty )^2.
\end{split}
\end{equation}
where $\mf{u}^\infty \in \mathbb{H}$ is a minimizer for the corresponding limit minimization problem. Now, as $\mf{u}^n$ is a solution of system \eqref{sysx} with potentials $\mf{V}^n$ and coupling matrix $\mf{B}^n$, we can test the equation for $u_{\bar j}^n$ with $u_{\bar j}^n$ itself, and passing to the limit we deduce
%\[
%\|u_{\bar j}^n\|_{V_{\bar i}^n}^2  = \sum_{k=1}^d \int_{\Omega} \beta_{\bar j k}^n \left(u_k^n u_{\bar j}^n \right)^2 \qquad \forall n.
%\]
%Passing to the limit as $n \to \infty$, recalling again Theorem \ref{thm:main3} we deduce
\begin{equation}\label{limite equazioe uj}
\|u_{\bar j}^n\|_{V_{\bar i}^n}^2  = \sum_{k=1}^d \int_{\Omega} \beta_{\bar j k}^n (u_k^n u_{\bar j}^n )^2 \quad \Longrightarrow \quad \|u_{\bar j}^\infty\|_{\tilde V_{\bar h}}^2 = \sum_{k \in I_{\bar h} \setminus \{\bar i, \bar j\}} \int_{\Omega} \tilde \beta_{\bar h} (u_{\bar j}^\infty  u_k^\infty )^2 + \int_{\Omega} \beta_{\bar j \bar j} (u_{\bar j}^\infty)^4,
\end{equation}
where we used again Theorem \ref{thm:main3}. Plugging the \eqref{limite equazioe uj} into the \eqref{assurdo al limite}, we obtain a contradiction:
\[
0 \le \int_{\Omega} ( \beta_{\bar j \bar j} - \tilde \beta_h) (u_{\bar j}^\infty)^4 <0,
\]
where in the last step we used assumption \eqref{h3} and the fact that $|u_{\bar j}^\infty|_4^2 \ge \underline{\gamma}$.
\end{proof}

\begin{proof}[Proof of Theorem \ref{thm:theorem 6}]
Up to minor changes, the same notation and the same line of reasoning adopted in the previous proof yields
\begin{equation}\label{assurdo al limite2}
\begin{split}
0 &\le \lim_{n \to \infty} \left( \|u_{\bar j}^n\|_{V_{\bar i}}^2 - \sum_{k \in I_{\bar h} \setminus \{\bar i\}} \int_{\Omega} \beta_{\bar i k} \left(u_{\bar j}^n  u_k^n \right)^2 - \sum_{k \in \{1,\dots,d\} \setminus I_{\bar h}} \int_{\Omega} \beta_{\bar i k}^n \left(u_{\bar j}^n  u_k^n \right)^2 \right) \\
& = \|u_{\bar j}^\infty\|_{\tilde V_{\bar i}}^2 -  \int_{\Omega} \beta_{\bar i \bar j} \left(u_{\bar j}^\infty\right)^4,
\end{split}
\end{equation}
where $\mf{u}^\infty$ is given by Theorem \ref{thm:main3}, and we used the fact that $I_{\bar h} \setminus \{\bar i\}$ has a unique element. Since we are assuming $u_{\bar i}^n \equiv 0$ for every $n$, by the equation for $u_{\bar j}^n$, and in light of Theorem \ref{thm:main3}, it is not difficult to deduce that $\|u_{\bar j}^\infty\|_{V_{\bar j}}^2 = \int_{\Omega} \beta_{\bar j\bar j} (u_{\bar i}^\infty)^4$. Now, recalling that $C_{\bar i \bar j}$ denotes the best constant such that $\|u\|_{V_{\bar i}}^2 \le C \|u\|_{V_{\bar j}}^2 $ for every $u \in H_0^1(\Omega)$, we can estimate the last term in the \eqref{assurdo al limite2} in the following way:
\[
0 \le  C_{\bar i \bar j} \|u_{\bar j}^\infty\|_{V_{\bar j}}^2 - \int_{\Omega} \beta_{\bar i\bar j} (u_{\bar j}^\infty)^4 \le \int_{\Omega} ( C_{\bar i \bar j} \beta_{\bar j \bar j} - \beta_{\bar i\bar j} ) (u_{\bar j}^\infty)^4 <0,
\]
where the last inequality follows by assumption \eqref{ass strong coop 2}.
\end{proof}

\section{Further results and comments}\label{sec: further}

\paragraph{Extension of our results in higher dimension.} All our results can be extended with some efforts to problems of type
\[
\begin{cases}
- \Delta u_i + V_i(x) u_i = \sum_{j = 1}^d \beta_{ij}(x) |u_j|^{q} |u_i|^{q-2} u_i & \text{in $\Omega$} \\
u_1=\cdots=u_d=0 &\text{in $\Omega$}.
\end{cases}
\]
where $\Omega$ is either a bounded domain of $\R^N$, or $\Omega=\R^N$, with $N \ge 2$; it is sufficient to assume that $1<q<N/(N-2)$ if $N \ge 3$, or $q>1$ if $N = 1,2$. We prefer to consider \eqref{sysx} in light of the great attention which has been devoted in the literature to systems of that form.

%\paragraph{Theorems \ref{thm:main4} and \ref{thm:theorem 6} with different quantifications}
%
%In Theorem \ref{thm:main4} the quantities $\delta$ and $b$ depend on $\tilde V_h,\beta_{ii}$ and $\tilde \beta_{h}$; some reader could prefer to obtain similar thresholds which depend only on $\tilde V_h$ and $\beta_{ii}$. This type of result can be easily obtained slightly modifying the proof of Theorem \ref{thm:main4}, under the additional assumption that $\beta_{ij}$ is a constant for every $(i,j) \in \mathcal{K}_1$. Analogously, it is possible to prove a variant of Theorem \ref{thm:theorem 6}.

\paragraph{Regularity for minimizers of the limit problem in Theorem \ref{thm:main3}}

Let $\mf{u}^\infty$ be the limiting profile of Theorem \ref{thm:main3}. By the occurrence of phase-separation, the domain $\Omega$ segregates in $m$ components $\Omega_h:= \bigcup_{i \in I_h}\left\{u_i^{\infty}>0\right\}$. We think that it can be interesting to analyse the regularity of $\mf{u}^\infty$ and of the free-boundary. Similar analysis has been carried on in \cite{CoTeVe2003} in a completely competitive framework, and in \cite{CafLin} assuming that $\beta_{ii}= 0$ for every $i$ and $\beta_{ij} \le 0$ for every $i \neq j$.

\paragraph{Acknowledgements.} We thank Professor Susanna Terracini for having suggested the problem, and for several inspiring discussions. Moreover,  we are indebted with Hugo Tavares for some precious suggestions.

%\bibliography{bibliography.bib}
%\bibliographystyle{abbrv}
\footnotesize

\end{document}